\algrenewcommand\algorithmicrequire{\textbf{Input:}}
\algrenewcommand\algorithmicensure{\textbf{Output:}}
\newcommand{\algorithmicbreak}{\textbf{break}}
\newcommand{\Break}{\State \algorithmicbreak}
\tikzset{
  delim options/.style={append after command={[#1]}},
  lr delims/.style={
    every delim around tikz cells/.append style={
      every left delimiter/.append style={#1},
      every right delimiter/.append style={#1}}},
  ab delims/.style={
    every delim around tikz cells/.append style={
      every above delimiter/.append style={#1},
      every below delimiter/.append style={#1}}},
  delim xshift/.style={
    every delim around tikz cells/.append style={
      every left delimiter/.append style={xshift={#1}},
      every right delimiter/.append style={xshift={-(#1)}}}},
  delim yshift/.style={
    every delim around tikz cells/.append style={
      every above delimiter/.append style={yshift={-(#1)}},
      every below delimiter/.append style={yshift={#1}}}},
  lr delim/.style args={#1#2 and #3#4 around #5 to #6}{
    append after command={
      {{[local bounding box=@] (\tikzlastnode-#5.north west)(\tikzlastnode-#6.south east)}
      (@)[every delim around tikz cells/.try,
          every left delimiter/.append style={#2},
          every right delimiter/.append style={#4},
          late options={left delimiter={#1},right delimiter={#3}}]}[]}},
  ab delim/.style args={#1#2 and #3#4 around #5 to #6}{
    append after command={
      {{[local bounding box=@] (\tikzlastnode-#5.north west)(\tikzlastnode-#6.south east)}
      (@)[every delim around tikz cells/.try,
          every above delimiter/.append style={#2},
          every below delimiter/.append style={#4},
          late options={above delimiter={#1},below delimiter={#3}}]}[]}},
}
\newcommand\mydots{\hbox to 1em{.\hss.\hss.}}
\renewcommand{\a}{\alpha}
\renewcommand{\b}{\beta}
\renewcommand{\c}{\gamma}
\newcommand{\kb}{\mathbf{k}}
\newcommand{\Kbb}{\mathbb{K}}
\newcommand{\Pbb}{\mathbb{P}}
\newcommand{\Rk}{\mathcal{R}}
\DeclareMathOperator{\Syz}{Syz}
\DeclareMathOperator{\lt}{lt}
\DeclareMathOperator{\lcm}{lcm}
\newcommand\red[1]{\textcolor{red}{\textbf{\textit{{#1}}}}}
\newtheorem*{Notation}{Notation}
\newtheorem*{Note}{Note}
\newtheorem{definition}{Definition}[section]
\newtheorem{proposition}{Proposition}[section]
\newtheorem{corollary}{Corollary}[section]
\newtheorem{theorem}{Theorem}[section]
\newtheorem{example}{Example}[section]
\newtheorem{lemma}{Lemma}[section]
\title[Free Resolution of Rees Algebra of Monomial Ideals]{On the minimal free resolution of the Rees algebra of tri-generated bivariate monomial ideals}
\author[R. Iglesias \and M. Orth \and E. S\'aenz-de-Cabez\'on \and W.M. Seiler]{Rodrigo Iglesias \and Matthias Orth \and Eduardo S\'aenz-de-Cabez\'on \and Werner M. Seiler}
\date{}                                           
\begin{document}
\maketitle
\begin{abstract}
Let $I$ be a monomial ideal in two variables generated by three monomials and let $\Rk(I)$ be its Rees ideal. We describe an algorithm to compute the minimal generating set of $\Rk(I)$. Based on the data obtained by this algorithm, we build a graph that encodes the minimal free resolution of $\Rk(I)$. We explicitly describe the modules and differentials on the minimal free resolution of $\Rk(I)$.
\end{abstract}

\section{Introduction}
Let $I\subseteq\Kbb[t_1,\dots,t_n]$ be an ideal in a polynomial ring on $n$ indeterminates. The Rees algebra of $I$ encodes important algebraic information about $I$ and geometrical information about the variety defined by $I$. In particular, the Rees algebra is used to compute integral closure of powers of ideals and the asymptotic behaviour of these powers and products of ideals \cite{BC17}; the homological properties of the Rees algebra characterise for instance ideals having linear powers, or linear products \cite{R01,BCV15,BC17}. In the geometric setting, the Rees algebra is a fundamental tool in the resolution of singularities of algebraic varieties. A recent application of Rees algebras is to the method of moving curves for the implicitization of a rational parametrization, which was developed by Sederberg and others in \cite{SC95,SGD97}. In \cite{C08},  Cox established a connection between this problem and the computation of the defining ideal of the Rees algebra of the ideal associated to the parametrization. After this connection was established, several cases have been analysed. In particular, in \cite{CortadellasDAndrea}, the authors study the case of monomial plane curves. They give a complete description of the minimal free resolution of the Rees algebra of the parametrization by means of an arithmetical procedure on the data defining the curve.

In general, little is known about the defining ideals and the structure of
free resolutions of Rees algebras, therefore the work in
\cite{CortadellasDAndrea} and others like \cite{V95,V08,GV09,FL15,LM06,N21}
are valuable contributions to the understanding of this important
object. This paper is a contribution to this area that extends some of the
previous work on Rees algebras. 
We develop an algorithm that
computes the defining ideal of the Rees algebra of monomial ideals in two
variables minimally generated by three monomials (tri-generated). The main
result of the first part of the paper is Algorithm~\ref{general:algorithm},
see Theorem~\ref{thm:general}. As a particular case, we apply our methods
to the problem of monomial (projective) plane curve parametrizations, as studied in
\cite{CortadellasDAndrea}. Using the intermediate computations of this algorithm, we describe a graph that encodes all the data in the minimal free resolution of the Rees algebra of the ideal.
This resolution is described in the main result of the paper, Theorem \ref{th:minimal_resolution}.
In the process of proving this result, we find a minimal Gr\"obner basis for the defining ideal of the Rees algebra of tri-generated monomial ideals in two variables for a suitable
term order.

The outline of the paper is the following: In Section
\ref{sec:preliminaries} we give the necessary preliminaries and basic
notations on Rees algebras. The main algorithm is
described in Section \ref{sec:mainAlgorithm}, and we apply it to the
particular case of parametrizations of monomial plane curves in Section
\ref{sec:monomialCurveParametrization}. Finally, Section
\ref{sec:resolution} is devoted to the description of the minimal free
resolution of the Rees algebra. This section starts with the description of the graph that encodes the minimal free resolution of the Rees algebra of the ideal (Section \ref{sec:graph}), then we obtain a Gr\"obner basis of the ideal (Section \ref{sec:grobner}), a non-minimal free resolution (Section \ref{sec:nm_resolution}) and finally the minimal free resolution (Section \ref{sec:minimal_resolution}). 

\section{Preliminaries and problem setting}\label{sec:preliminaries}
Let $I\subset R=\Kbb[T_1,\dots,T_n]=\Kbb[\mathbf{T}]$ be a monomial ideal, let $G(I)$ be the unique minimal set of monomial generators of $I$, and let $z$ be an extra variable. The \emph{Rees algebra} of $I$, denoted by $\Rk_I\subset R[z]$ is the $R$-subalgebra
\[
\Rk_I=R\oplus Iz\oplus I^2z^2\oplus\cdots
\]
We can describe the Rees algebra in the following way. Let $S$ denote the polynomial ring $S=\Kbb[T_1,\dots,T_n][X_w\mid w\in G(I)]=\Kbb[\mathbf{T}][\mathbf{X}]$. The {\em Rees map} is the $R$-algebra map $\phi:S\rightarrow R[z]$ given by $\phi(X_w)=wz$. The kernel of $\phi$ is called the {\em Rees ideal} of $I$ (or the {\em defining ideal of the Rees algebra of $I$)}, denoted by $\Rk(I)$. The Rees algebra of $I$ is given by the quotient $\Rk_I\simeq S/\Rk(I)$, which is isomorphic to the image of $\phi$. In addition to the Rees map, we will make use of the {\em toric map} $\psi:S\rightarrow R$ given by $\psi(X_w)=w$. The generators of $\Rk(I)=\ker(\phi)$ are binomials of the form $\mathbf{T}^{\a}\mathbf{X}^{\b}-\mathbf{T}^{\a'}\mathbf{X}^{\b'}$, where $\phi(\mathbf{T}^{\a}\mathbf{X}^{\b})=\phi(\mathbf{T}^{\a'}\mathbf{X}^{\b'})$. These correspond, by dropping the powers of $z$, to binomials such that $\psi(\mathbf{T}^{\a}\mathbf{X}^{\b})=\psi(\mathbf{T}^{\a'}\mathbf{X}^{\b'})$ where $\vert \b\vert=\vert \b'\vert$.

\begin{example}\label{ex:monomialParametrization}
In the case of a monomial plane curve, we start with a parametrization $\varphi:\Pbb^1_\Kbb\rightarrow \Pbb^2_\Kbb$ given by $(t_0:t_1)\mapsto(t_0^d:t_0^{u}t_1^{d-u}:t_1^d)$. We set now $R=\Kbb[T_0,T_1]$ and $S=\Kbb[T_0,T_1,X_0,X_1,X_2]$. The ideal of the parametrization is the monomial ideal $I\subseteq R$ given by $I=\langle T_0^d, T_0^{u}T_1^{d-u},T_1^d\rangle$ and the Rees map is given by $\phi(X_0)=T_0^dz,\,\phi(X_1)=T_0^{u}T_1^{d-u}z,\,\phi(X_2)=T_1^dz$. The Rees algebra of $I$ is fully studied in \cite{CortadellasDAndrea}.
\end{example}

The main approach for the study of $\Rk_I$ is to obtain a minimal generating
set of the Rees ideal $\Rk(I)$, which we denote $G\left(\Rk(I)\right)$. The binomials
in such a minimal generating set are called {\em essential} or {\em
  indispensable} binomials, and obtaining them is difficult in general, both for toric
and for Rees ideals, cf. \cite{CKT07,OV10,CortadellasDAndrea}. The minimal
generators of $\Rk(I)$ are given by relations among powers of the minimal
generators of $I$, these are essentially minimal first syzygies of $I^t$
for each $t$, but also some divisibility relations among powers of minimal
generators of $I$ might give rise to indispensable binomials.


Let $I\subseteq R=\Kbb[T_0,T_1]=\Kbb[\mathbf{T}]$ be a monomial ideal
minimally generated by the set $G(I)=\{
\mathbf{T}^{\a},\mathbf{T}^{\b},\mathbf{T}^{\c}\}$. Let
$h=T_0^{\mu_0}T_1^{\mu_1}$ be the greatest common divisor of the elements
of $G(I)$ and $I'=\langle \frac{g}{h}\vert g\in G(I)\rangle$. We have that
$s\cdot \mathbf{T}^i-s'\cdot \mathbf{T}^j=0\iff s\cdot
\frac{\mathbf{T}^i}{h}-s'\cdot \frac{\mathbf{T}^j}{h}=0$, therefore we only
need to study the Rees algebra of zero-dimensional ideals. Hence, without loss of generality, we consider ideals of the form $I=\langle T_0^{d_1},T_0^{u_1}T_1^{u_2},T_1^{d_2}\rangle$, where $d_1,u_1,d_2$ and $u_2$ are four integers.

\section{Algorithm for obtaining the minimal generating set of the Rees algebra of a monomial ideal in three generators}\label{sec:mainAlgorithm}
 
Let $I=\langle T_0^{d_1},T_0^{u_1}T_1^{u_2},T_1^{d_2}\rangle \subseteq \kb[T_0,T_1]$. In order to compute the generators of the Rees ideal of $I$ we need to find the essential binomials of the form $\mathbf{T}^\a\mathbf{X}^\b-\mathbf{T}^{\a'}\mathbf{X}^{\b'}$ with $\vert \b\vert=\vert \b'\vert=t$, for all $t$, such that $\phi(\mathbf{T}^{\a}\mathbf{X}^{\b}-\mathbf{T}^{\a'}\mathbf{X}^{\b'})=\psi(\mathbf{T}^{\a}\mathbf{X}^{\b}-\mathbf{T}^{\a'}\mathbf{X}^{\b'})=0$.
Lemma \ref{lemma:consecutive} and Propositions \ref{prop:general1} and \ref{prop:generalLimit} below give necessary conditions for essential binomials, while Proposition \ref{prop:general3} gives a sufficient condition. These propositions are constructive in the sense that we explicitly enumerate all the essential binomials, i.e. the minimal generators of $\Rk(I)$. This enumeration gives rise to Algorithm \ref{general:algorithm}, which is the main result of this section, see Theorem \ref{thm:general}.

First observe that since $T_1 \nmid \psi(X_0)$ and $T_0 \nmid \psi(X_2) $ only $\psi(X_1)$ can be expressed as a combination of $\psi(X_0)$ and $\psi(X_2)$, i.e. there is some tuple $(k,k_1,k_2) \in \mathbb{N}_0^3$ such that $\psi(X_1^k)=\psi(X_0^{k_1}X_2^{k_2})$. We get that the smallest integers satisfying that relation are
$$k=\lcm \left( \frac{d_1}{\gcd(d_1,u_1)},\frac{d_2}{\gcd(d_2,u_2)} \right)  \text{ ,  } k_1 =\frac{ku_1}{d_1} \text{ and }  k_2 =\frac{ku_2}{d_2}.$$
Let $G^{(t)}(I)=\{g^{(t)}_{1},\dots,g^{(t)}_{k_t} \}$ be the set of
products of $t$ elements of $G(I)$ ordered lexicographically. We denote by
$\psi'_t (g^{(t)}_i)$ the monomial $X_0^aX_1^bX_2^c\in S$ such that
$\psi(X_0^aX_1^bX_2^c)=g^{(t)}_i$ and $a+b+c=t$. Observe that this is
unique for $t<k$, and thus by a slight abuse of notation, we can consider
$\psi'_t$ and $\psi$ as mutually inverse for $t<k$. In the following
sections, we will say that a binomial $\mathbf{T}^\a\mathbf{X}^\b-\mathbf{T}^{\a'}\mathbf{X}^{\b'}$ in $S=\mathbb{K}[\mathbf{T}][\mathbf{X}]$ corresponds to the relation $s\cdot g^{(t)}_{i} - s'\cdot g^{(t)}_{j}$ in $\mathbb{K}[\mathbf{T}]$ if $\mathbf{T}^\a=s$, $\mathbf{T}^{\a'}=s'$, $\mathbf{X}^\b=\psi'_t(g_i^{(t)})$ and  $\mathbf{X}^{\b'}=\psi'_t(g_j^{(t)})$ and vice-versa.

\begin{lemma}\label{lemma:consecutive}
The essential binomials of $\Rk(I)$ correspond to relations of the form
$$s^{(t)}_{i,i+1}\cdot g^{(t)}_{i} - s'^{(t)}_{i,i+1}\cdot g^{(t)}_{i+1}, \text{ for }i=1,2,\dots ,k_t-1.$$
 where
 $$ s^{(t)}_{i,j}= \frac{\lcm(g^{(t)}_{i},g^{(t)}_{j})}{g^{(t)}_{i}} \text{  , and  }  s'^{(t)}_{i,j}= \frac{\lcm(g^{(t)}_{i},g^{(t)}_{j})}{g^{(t)}_{j}}, \text{ for } i<j.$$ 
\end{lemma}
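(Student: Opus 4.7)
The plan is to argue by contrapositive: if a binomial $B = \mathbf{T}^\a \mathbf{X}^\b - \mathbf{T}^{\a'}\mathbf{X}^{\b'} \in \Rk(I)$ with $|\b| = |\b'| = t$ corresponds to a relation between $g^{(t)}_i$ and $g^{(t)}_j$ with $j \geq i+2$, then $B$ is not essential. The key preliminary step is to isolate the form of an essential binomial. Any essential $B$ must satisfy both $\gcd(\mathbf{T}^\a, \mathbf{T}^{\a'}) = 1$ and $\gcd(\mathbf{X}^\b, \mathbf{X}^{\b'}) = 1$, for otherwise extracting a common monomial factor (in either variable set) yields a strictly smaller binomial still in $\Rk(I)$, since the conditions $\psi(\mathbf{T}^\a \mathbf{X}^\b) = \psi(\mathbf{T}^{\a'} \mathbf{X}^{\b'})$ and $|\b| = |\b'|$ are both preserved under such extraction. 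Combined with $\mathbf{T}^\a g^{(t)}_i = \mathbf{T}^{\a'} g^{(t)}_j$, the $\mathbf{T}$-coprimality forces $\mathbf{T}^\a = s^{(t)}_{i,j}$ and $\mathbf{T}^{\a'} = s'^{(t)}_{i,j}$, so $B$ has the Taylor-type shape $s^{(t)}_{i,j} \mathbf{X}^\b - s'^{(t)}_{i,j} \mathbf{X}^{\b'}$.

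Writing $g^{(t)}_k = T_0^{a_k} T_1^{b_k}$, the lex ordering yields $a_i \geq a_{i+1} \geq \cdots \geq a_j$. I would then split according to whether there exists an index $m$ with $i < m < j$ and $b_m \leq \max(b_i, b_j)$. When such an $m$ exists, the divisibilities $\lcm(g^{(t)}_i, g^{(t)}_m) \mid \lcm(g^{(t)}_i, g^{(t)}_j)$ and $\lcm(g^{(t)}_m, g^{(t)}_j) \mid \lcm(g^{(t)}_i, g^{(t)}_j)$ both hold, and a direct computation yields the telescope identity
\begin{equation*}
s^{(t)}_{i,j} g^{(t)}_i - s'^{(t)}_{i,j} g^{(t)}_j = P \cdot \bigl(s^{(t)}_{i,m} g^{(t)}_i - s'^{(t)}_{i,m} g^{(t)}_m\bigr) + Q \cdot \bigl(s^{(t)}_{m,j} g^{(t)}_m - s'^{(t)}_{m,j} g^{(t)}_j\bigr),
\end{equation*}
with $P = \lcm(g^{(t)}_i, g^{(t)}_j)/\lcm(g^{(t)}_i, g^{(t)}_m)$ and $Q = \lcm(g^{(t)}_i, g^{(t)}_j)/\lcm(g^{(t)}_m, g^{(t)}_j)$ both honest monomials in $\Kbb[\mathbf{T}]$. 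Lifting this identity to $S$ via $\psi'_t$ expresses $B$ as an $S$-combination of the binomials corresponding to the pairs $(i, m)$ and $(m, j)$; an induction on the gap $j - i$ then reduces everything to consecutive pairs, contradicting essentiality.

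If no such $m$ exists, all intermediates satisfy $b_m > \max(b_i, b_j)$, which together with $a_m \geq a_j$ forces $g^{(t)}_j \mid g^{(t)}_m$ for every $i < m < j$. In this exceptional situation I would argue, via a case analysis on the supports of $\psi'_t(g^{(t)}_i)$ and $\psi'_t(g^{(t)}_j)$ among $X_0, X_1, X_2$, that $\mathbf{X}^\b$ and $\mathbf{X}^{\b'}$ must share a common variable: the only mechanism for the $T_1$-exponent to overshoot both $b_i$ and $b_j$ while the $T_0$-exponent stays between $a_i$ and $a_j$ is an increased use of the middle generator $f_1 = T_0^{u_1}T_1^{u_2}$, which constrains the preimages of $g^{(t)}_i$ and $g^{(t)}_j$ to overlap and so contradicts the $\mathbf{X}$-coprimality established in the first step.

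The main obstacle is precisely this second case: the telescope step is a routine polynomial identity that lifts directly along $\psi'_t$, whereas ruling out the ``overshooting'' configuration requires a careful combinatorial analysis specific to the three-generator setup in two variables.
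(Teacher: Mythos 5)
Your overall strategy is the same as the paper's: reduce to the case where the two terms of the binomial are coprime (which forces $\mathbf{T}^{\a}=s^{(t)}_{i,j}$ and $\mathbf{T}^{\a'}=s'^{(t)}_{i,j}$), then telescope a non-consecutive relation through an intermediate element of $G^{(t)}(I)$. The paper telescopes through $g^{(t)}_{i+1}$ unconditionally, with coefficients $\lcm(g^{(t)}_i,g^{(t)}_{i+j})/\lcm(g^{(t)}_i,g^{(t)}_{i+1})$ and $\lcm(g^{(t)}_i,g^{(t)}_{i+j})/\lcm(g^{(t)}_{i+1},g^{(t)}_{i+j})$, implicitly assuming these are monomials; you correctly isolate the divisibility condition this requires (namely $g^{(t)}_m\mid\lcm(g^{(t)}_i,g^{(t)}_j)$, i.e.\ $b_m\le\max(b_i,b_j)$) and observe that it can fail. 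It genuinely can: for $I=\langle T_0^5,T_0^3T_1^3,T_1^5\rangle$ and $t=2$ the lex-ordered list begins $T_0^{10},\,T_0^8T_1^3,\,T_0^6T_1^6,\,T_0^5T_1^5,\dots$, and for the pair $(g^{(2)}_2,g^{(2)}_4)=(T_0^8T_1^3,T_0^5T_1^5)$ the unique intermediate $g^{(2)}_3=T_0^6T_1^6$ overshoots, so neither your telescope nor the paper's literal identity applies; the pair is instead handled by the common factor $X_0$ of $\psi'_2(g^{(2)}_2)=X_0X_1$ and $\psi'_2(g^{(2)}_4)=X_0X_2$, exactly as your second case predicts. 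So on the first case you are more careful than the paper, which silently has the same hole.

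The genuine gap is your second case, which you yourself flag as unresolved: when every intermediate overshoots, you assert but do not prove that $\mathbf{X}^{\b}$ and $\mathbf{X}^{\b'}$ must share a variable. The reduction is clear up to a point: coprimality of the $X$-parts forces one of them to be a pure power $X_\ell^t$ (two disjoint supports of size at least two cannot fit in three variables), and the subcases $X_0^t$ and $X_2^t$ are easy because their immediate lex-neighbours $\psi(X_0^{t-1}X_1)$ and $\psi(X_1X_2^{t-1})$ are non-overshooting intermediates. What remains is precisely the configuration $\{\psi(X_1^t),\,\psi(X_0^{p'}X_2^{s'})\}$ with $p',s'\ge 1$, and here "the only mechanism for overshooting is increased use of the middle generator" is a heuristic, not an argument: one must actually show that either the pair is lex-consecutive or some intermediate fails to overshoot, which comes down to arithmetic such as $u_1(t-1)\le d_1p'\le u_1t$ (forced by excluding $\psi(X_0^{p'}X_1X_2^{s'-1})$ as a non-overshooting intermediate) and an analysis of which exponent triples can land in that narrow $T_0$-degree window. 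Until that combinatorial step is carried out, the lemma is not proved for this case, and the case is not vacuous in spirit even though the conclusion appears to hold in examples.
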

\begin{proof}
Consider a binomial corresponding to the relation formed by two monomials that are not lexicographically consecutive in $G^{(t)}(I)$, i.e.
\[
s^{(t)}_{i,i+j}\cdot g^{(t)}_{i} - s'^{(t)}_{i,i+j}\cdot g^{(t)}_{i+j}, \text{ for some } j>1.
\]
Then,
\begin{align*}
s^{(t)}_{i,i+j}\cdot \psi'_t(g^{(t)}_{i}) - s'^{(t)}_{i,i+j}\cdot \psi'_t(g^{(t)}_{i+j})=&c_{i,i+1}\big( s^{(t)}_{i,i+1}\cdot \psi'_t(g^{(t)}_{i}) - s'^{(t)}_{i,i+1}\cdot \psi'_t(g^{(t)}_{i+1})\big)\\
&+c'_{i+1,i+j}\big(s^{(t)}_{i+1,i+j}\cdot \psi'_t(g^{(t)}_{i+1}) - s'^{(t)}_{i+1,i+j}\cdot \psi'_t(g^{(t)}_{i+j})\big),
\end{align*}
where $c_{i,i+1}=\frac{s^{(t)}_{i,i+j}}{s^{(t)}_{i,i+1}}=\frac{\lcm(g^{(t)}_i,g^{(t)}_{i+j})}{\lcm(g^{(t)}_i,g^{(t)}_{i+1})}$ and $c'_{i+1,i+j}=\frac{s'^{(t)}_{i,i+j}}{s'^{(t)}_{i+1,i+j}}=\frac{\lcm(g^{(t)}_i,g^{(t)}_{i+j})}{\lcm(g^{(t)}_{i+1},g^{(t)}_{i+j})}$.
Hence, the binomial $s^{(t)}_{i,i+j}\cdot \psi'_t(g^{(t)}_{i}) - s'^{(t)}_{i,i+j}\cdot \psi'_t(g^{(t)}_{i+j})$ is not a minimal generator.
\end{proof}

Observe that for $t=1$ we have two essential binomials, namely those corresponding to the only two minimal syzygies in $\Syz(I)$:
\begin{equation}\label{syz1}
    s^{(1)}_{1,2}\cdot g_{1}^{(1)}-s'^{(1)}_{1,2}\cdot g_{2}^{(1)} \mbox { corresponds to }  T_1^{u_2} X_0 - T_0^{d_1-u_1} X_1  \in G(\Rk(I))
\end{equation} 
\begin{equation}\label{syz2}
    s^{(1)}_{2,3}\cdot g_{2}^{(1)}-s'^{(1)}_{2,3}\cdot g_{3}^{(1)} \mbox { corresponds to } T_1^{d_2-u_2} X_1 - T_0^{u_1} X_2 \in G(\Rk(I)).
\end{equation}

As mentioned above, the minimal generating set of $\Rk(I)$ is composed of binomials of the form $\mathbf{T}^\a\mathbf{X}^\b-\mathbf{T}^{\a'}\mathbf{X}^{\b'}$ with $\vert \b\vert=\vert \b'\vert=t$, for all $t$, such that $\psi(\mathbf{T}^{\a}\mathbf{X}^{\b}-\mathbf{T}^{\a'}\mathbf{X}^{\b'})=0$. This implies that both terms in each essential binomial must not have any common factor, which can only be achieved if $\mathbf{X}^{\b}$ or $\mathbf{X}^{\b'}$ is actually a pure power of one of the $X_i$.
Note that for every $t>0$, $\psi(X_0^t)$ is the largest lexicographically ordered element in $G^{(t)}(I)$ and $\psi(X_2^t)$ is the smallest. By Lemma \ref{lemma:consecutive} we know that essential binomials come uniquely from the relations between two consecutive elements in $G^{(t)}(I)$; however, the only consecutive elements of $\psi(X_0^t)$ and  $\psi(X_2^t)$ are  $\psi(X_0^{t-1}X_1)$ and $\psi(X_1X_2^{t-1})$ respectively.
In both instances we have a common factor ($X_0^{t-1}$ and $X_2^{t-1}$ respectively); therefore, for $t>1$ there can not be any essential binomial involving the generators $X_0^t=\psi'_t(g^{(t)}_{1})$ and $X_2^t=\psi'_t(g^{(t)}_{k_t})$. Hence, we must only look for consecutive pairs in $G^{(t)}(I)$ involving the generator $g^{(t)}_{i}=\psi(X_1^t)$, because any potential essential binomial can only come from those. 
\begin{Notation}
    For the next results, we introduce the following notation:
\begin{itemize}
    \item[-]$q=\min{\bigl\{ \frac{d_1}{\gcd(d_1,u_1)}, \frac{d_2}{\gcd(d_2,u_2)}\bigr\}}$ and $[q]=\{1,\dots,q\}$.
    \item[-] $a^{(k)}_i= u_k\cdot i \pmod{d_k}$ and $b_i^{(k)}=\lfloor \frac{u_k\cdot i}{d_k}\rfloor $ for $i=1,2,\dots,q$ and $k=0,1$.
    \item[-] $\Delta_{\max}(d_k,u_k)=\{ i \in [q] \mid a_i^{(k)} =\max\{a^{(k)}_1,\dots,a^{(k)}_i\}\}$
    \item[-] $\Delta_{\min}(d_k,u_k)=\{ i \in [q] \mid a_i^{(k)}=\min\{a^{(k)}_1,\dots,a^{(k)}_i\}\}$
   
\end{itemize} 
\end{Notation}


\begin{lemma}\label{lem:ElementaryPropertiesegreeSet}
  $\Delta_{\max}(d_k,u_k)\cap \Delta_{\min}(d_k,u_k)=\{1\}$. Furthermore, for any $\delta\in\Delta_{\min}(d_k,u_k)$ and $\epsilon\in\Delta_{\max}(d_k,u_k)$, we have $a^{(k)}_{\delta}\leq a^{(k)}_{\epsilon}$ and this inequality is strict if $\delta\neq\epsilon$.
\end{lemma}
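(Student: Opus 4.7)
The plan is to reduce everything to the observation that the map $i\mapsto a^{(k)}_i$ is injective on $[q]$. Indeed, $a^{(k)}_i=a^{(k)}_j$ iff $u_k(i-j)\equiv 0\pmod{d_k}$, iff $\frac{d_k}{\gcd(d_k,u_k)}$ divides $i-j$. For $i,j\in[q]$ we have $|i-j|\leq q-1<q\leq\frac{d_k}{\gcd(d_k,u_k)}$, so the divisibility forces $i=j$. I would state this injectivity as the first step, since both claims of the lemma follow from it almost mechanically.

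With injectivity in hand, the first assertion is immediate. The inclusion $\{1\}\subseteq\Delta_{\max}\cap\Delta_{\min}$ holds trivially because $a^{(k)}_1$ is simultaneously the max and min of the singleton $\{a^{(k)}_1\}$. For the reverse inclusion, if some $i>1$ lay in both sets, then $a^{(k)}_i$ would be both the maximum and the minimum of $\{a^{(k)}_1,\dots,a^{(k)}_i\}$, forcing $a^{(k)}_1=a^{(k)}_i$, contradicting injectivity.

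For the second assertion I would split on whether $\delta\leq\epsilon$ or $\delta>\epsilon$. In the first case, $a^{(k)}_\delta\in\{a^{(k)}_1,\dots,a^{(k)}_\epsilon\}$, whose maximum equals $a^{(k)}_\epsilon$ by definition of $\Delta_{\max}$, giving $a^{(k)}_\delta\leq a^{(k)}_\epsilon$. In the second case, $a^{(k)}_\epsilon\in\{a^{(k)}_1,\dots,a^{(k)}_\delta\}$, whose minimum equals $a^{(k)}_\delta$, yielding the same inequality. In either case, equality combined with injectivity forces $\delta=\epsilon$, which proves the strictness statement.

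I do not expect any real obstacle here; the content of the lemma is essentially bookkeeping about running maxima and minima of the injective sequence $(a^{(k)}_i)_{i\in[q]}$. The only subtlety worth highlighting in the write-up is the bound $q\leq d_k/\gcd(d_k,u_k)$, which is exactly what the definition of $q$ as the minimum of the two such quantities guarantees, and which is the precise input needed for injectivity.
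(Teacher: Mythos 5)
Your proof is correct; the paper itself dismisses this lemma with ``clear from the definitions,'' and your write-up supplies exactly the details that makes it so. The one genuinely load-bearing observation—injectivity of $i\mapsto a^{(k)}_i$ on $[q]$, which follows from $q\leq d_k/\gcd(d_k,u_k)$—is correctly identified and proved, and both assertions do follow from it as you describe.
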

\begin{proof}
 This is clear from the definitions.
\end{proof}

\begin{lemma}\label{lem:MinMaxGeneralRelation}
 We have the following relations between $\Delta_{\min}(d_k,u_k)$ and $\Delta_{\max}(d_k,u_k)$:
 \begin{enumerate}
  \item Let $\delta\in\Delta_{\min}(d_k,u_k)$, and let $\epsilon=\min\{\gamma\in \Delta_{\min}(d_k,u_k)\mid \gamma>\delta\}$. Then, 
   \begin{enumerate}[label=(\alph*),nolistsep]
    \item  \label{lem:MinMaxGeneralRelation:item1a} $\epsilon-\delta\in \Delta_{\max}(d_k,u_k)$,
    \item \label{lem:MinMaxGeneralRelation:item1b} There is no element of $\Delta_{\max}(d_k,u_k)$ between $\epsilon-\delta$ and $\epsilon$.
   \end{enumerate}
  \item  Let $\delta\in\Delta_{\max}(d_k,u_k)$, and let $\epsilon=\min\{\gamma\in \Delta_{\max}(d_k,u_k)\mid \gamma>\delta\}$. Then, 
  \begin{enumerate}[label=(\alph*),nolistsep]
    \item \label{lem:MinMaxGeneralRelation:item2a} $\epsilon-\delta\in \Delta_{\min}(d_k,u_k)$,
    \item \label{lem:MinMaxGeneralRelation:item2b} There is no element of $\Delta_{\min}(d_k,u_k)$ between $\epsilon-\delta$ and $\epsilon$.
   \end{enumerate}
   \item In the situation of item 1, let $\zeta_1<\cdots<\zeta_r$ be the integers in $\Delta_{\max}(d_k,u_k)$ between $\delta$ and $\epsilon$. Then, for each $n$ with $1\leq n\leq r$,  we have:
    \begin{enumerate}[label=(\alph*),nolistsep]
     \item $\zeta_n-\delta\in\Delta_{\max}(d_k,u_k)$,
     \item There is no element of $\Delta_{\max}(d_k,u_k)$ between $\zeta_n-\delta$ and $\zeta_{n}$.
    \end{enumerate}
       \item In the situation of item 2, let $\zeta_1<\cdots<\zeta_r$ be the integers in $\Delta_{\min}(d_k,u_k)$ between $\delta$ and $\epsilon$. Then, for each $n$ with $1\leq n\leq r$,  we have:
    \begin{enumerate}[label=(\alph*),nolistsep]
     \item $\zeta_n-\delta\in\Delta_{\min}(d_k,u_k)$,
     \item There is no element of $\Delta_{\min}(d_k,u_k)$ between $\zeta_n-\delta$ and $\zeta_{n}$.
    \end{enumerate}
 \end{enumerate}

\end{lemma}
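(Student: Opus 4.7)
The plan is to exploit the additive relation $a_{i+j}^{(k)}\in\{a_i^{(k)}+a_j^{(k)},\,a_i^{(k)}+a_j^{(k)}-d_k\}$, the second alternative occurring precisely when $a_i^{(k)}+a_j^{(k)}\geq d_k$. Dropping the superscript $k$, I will freely use that the values $a_i$ for $i\in[q]$ are pairwise distinct (since $q$ does not exceed the period $d/\gcd(d,u)$) together with the elementary extremal facts implicit in the definitions: for consecutive $\delta<\epsilon$ in $\Delta_{\min}$ one has $a_\epsilon<a_\delta$ and $a_i>a_\delta$ for every $i\in(\delta,\epsilon)$, with the dual inequalities for consecutive maxima.

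For item~1 the strict inequality $a_\epsilon<a_\delta$ forces a wrap in $a_\epsilon=a_\delta+a_{\epsilon-\delta}-d$, whence $a_{\epsilon-\delta}=d+a_\epsilon-a_\delta$. To prove (a) I would assume some $j<\epsilon-\delta$ has $a_j\geq a_{\epsilon-\delta}$; then $a_j+a_\delta\geq d+a_\epsilon>d$ triggers a wrap and $a_{j+\delta}=a_j+a_\delta-d<a_\delta$ at an index in $(\delta,\epsilon)$, contradicting the extremal fact. For (b), writing $\xi=(\epsilon-\delta)+l$ with $0<l<\delta$, the inequality $a_l>a_\delta$ forces $a_{\epsilon-\delta}+a_l>d$, hence a wrap, so $a_\xi<a_{\epsilon-\delta}$, which is incompatible with $\xi\in\Delta_{\max}$ and $\xi>\epsilon-\delta$. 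Item~2 is handled by the mirror argument, in which $a_\epsilon>a_\delta$ instead forces the no-wrap case and the roles of minima and maxima are interchanged.

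Item~3(a) follows the same template as 1(a): from $\zeta_n>\delta$ and $\zeta_n\in\Delta_{\max}$ one gets $a_{\zeta_n}>a_\delta$, forcing no-wrap and $a_{\zeta_n-\delta}=a_{\zeta_n}-a_\delta$. A putative $j<\zeta_n-\delta$ with $a_j\geq a_{\zeta_n-\delta}$ produces a dichotomy for $a_{j+\delta}$, each branch ending in a direct contradiction (a value below $a_\delta$ inside $(\delta,\epsilon)$, or a value $\geq a_{\zeta_n}$ at an index strictly before $\zeta_n\in\Delta_{\max}$). Item~4(a) is dual.

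The main obstacle lies in the sharpness claims (b) of items~3 and 4, which have to be chained with the previously proven parts. For 3(b), a hypothetical $\xi\in\Delta_{\max}$ with $\zeta_n-\delta<\xi<\zeta_n$ together with the bound $\zeta_n\leq\epsilon-\delta$ (from item~1(b)) places $\xi+\delta$ safely in $(\zeta_n,\epsilon)$; the usual wrap argument then yields $a_{\xi+\delta}=a_\xi+a_\delta>a_{\zeta_n}$. When $n=r$ this already contradicts $\zeta_r$ being the largest element of $\Delta_{\max}$ in $(\delta,\epsilon)$; and when $n<r$ the next maximum $\zeta_{n+1}$ must appear at an index $\leq\xi+\delta$, so $\zeta_{n+1}-\zeta_n<\delta$. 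Items~2(a) and 2(b) applied to the consecutive maxima $\zeta_n,\zeta_{n+1}$ then place $\zeta_{n+1}-\zeta_n$ in $\Delta_{\min}$ while forbidding any element of $\Delta_{\min}$ in $(\zeta_{n+1}-\zeta_n,\zeta_{n+1})$; the element $\delta$ sits precisely there, yielding the desired contradiction. Item~4(b) follows by the dual manoeuvre, invoking items~1(a) and 1(b) in place of 2(a) and 2(b).
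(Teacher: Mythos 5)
Your proof is correct. For items 1(a), 2(a)/(b), 3(a) and 4(a) your wrap/no-wrap dichotomy for $a_{\delta+j}$ is essentially the paper's argument (the paper likewise derives $a_\epsilon<a_{\delta+j}<a_\delta$, resp.\ the two-branch dichotomy in 3(a), and concludes that an intermediate running extremum would have to exist). The routes diverge on the sharpness claims. For 1(b) the paper assumes a minimal offending maximum $\zeta\in(\epsilon-\delta,\epsilon)$, pulls it back via item 2(a) to a minimum $\eta<\delta$ and contradicts $\delta\in\Delta_{\min}(d_k,u_k)$; you instead show directly that every $\xi=(\epsilon-\delta)+l$ with $0<l<\delta$ wraps and satisfies $a_\xi<a_{\epsilon-\delta}$, so once 1(a) is known no such $\xi$ can lie in $\Delta_{\max}(d_k,u_k)$ --- shorter, and all indices you touch stay below $\epsilon\leq q$, whereas the paper's 1(b) passes through $a_{\delta+\zeta}$ with $\delta+\zeta$ possibly exceeding $q$. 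The largest difference is in 3(b): the paper reduces to $n=1$, then argues $r=1$, and finally contradicts itself through the auxiliary index $\sigma=\zeta+\rho$ and a chain of comparisons; you instead use 1(b) to get $\zeta_n\leq\epsilon-\delta$, place $\xi+\delta$ in $(\zeta_n,\epsilon)$, rule out the wrap because a wrap would create a new minimum inside $(\delta,\epsilon)$, obtain $a_{\xi+\delta}=a_\xi+a_\delta>a_{\zeta_n}$, and then either contradict the maximality of $\zeta_r$ (case $n=r$) or force $\zeta_{n+1}-\zeta_n<\delta$ and contradict 2(a)/(b) applied to the consecutive maxima $\zeta_n,\zeta_{n+1}$, since $\delta$ would be a minimum sitting in the forbidden interval $(\zeta_{n+1}-\zeta_n,\zeta_{n+1})$ (case $n<r$). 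This is a genuinely different and noticeably more economical treatment of 3(b)/4(b) that buys a cleaner logical dependence: only the already-established items 1--2 and 3(a) are invoked, with no case reduction on $r$.
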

\begin{proof}
 We first prove items 1a and 2a. For this it suffices to prove item 1a, because item 2a is dual to it.
 
 Regarding item 1a, let $\delta\in \Delta_{\min}(d_k,u_k)$ and $\epsilon=\min\{\gamma\in \Delta_{\min}(d_k,u_k)\mid \gamma>\delta\}$. We know that $a^{(k)}_\delta=\min\{a^{(k)}_{i} \mid 1\leq i\leq\delta\}$ and $a^{(k)}_\epsilon=\min\{a^{(k)}_i\mid 1\leq i\leq\epsilon\}$. This implies in particular that $a^{(k)}_\delta>a^{(k)}_\epsilon\geq 0$. Moreover, there is no element of $\Delta_{\min}(d_k,u_k)$ between $\delta$ and $\epsilon$. Now we argue by \emph{reductio ad absurdum}. Assume that $\epsilon-\delta\notin \Delta_{\max}(d_k,u_k)$. Then there is an integer $1\leq j<\epsilon-\delta$ such that $a^{(k)}_j>a^{(k)}_{\epsilon-\delta}$. This implies
 $$a^{(k)}_\epsilon<a^{(k)}_{\delta+j}<a^{(k)}_\delta,$$
 but then $\Delta_{\min}(d_k,u_k)$ would contain an element between $\delta$ and $\epsilon$, a contradiction. Thus the assumption was false and $\epsilon-\delta\in \Delta_{\max}(d_k,u_k)$ as claimed.
 
 Now we prove items 1b and 2b. For this it suffices to prove item 1b, because item 2b is dual to it.
 
 Regarding item 1b, we argue by \emph{reductio ad absurdum}. Assume there were an integer $\zeta\in\Delta_{\max}(d_k,u_k)$ with $\epsilon-\delta<\zeta<\epsilon$, and take the minimal such $\zeta$. Then $\zeta=\min\{\gamma\in \Delta_{\max}(d_k,u_k)\mid \gamma>\epsilon-\delta\}$, and, using item 2a, we obtain $\eta:=\zeta-(\epsilon-\delta)\in\Delta_{\min}(d_k,u_k)$. Note that $\eta<\epsilon-(\epsilon-\delta)=\delta$. Moreover, $a^{(k)}_\zeta>a^{(k)}_{\epsilon-\delta}$; but this implies 
 $$a^{(k)}_\epsilon<a^{(k)}_{\delta+\zeta}<a^{(k)}_\delta,$$
 yielding $a^{(k)}_{\delta+\zeta-\epsilon}=a^{(k)}_\eta<a^{(k)}_\delta$,
 in contradiction to $\delta\in\Delta_{\min}(d_k,u_k)$. Hence the assumption was false and 1b holds.
 
Now we prove items 3a and 4a. For this it suffices to prove item 3a, because item 4a is dual to it.

Regarding item 3a, by hypothesis, we know that $a^{(k)}_\delta=\min\{a^{(k)}_i\mid 1\leq i\leq\delta\}$ and $a^{(k)}_{\zeta_n}=\max\{a^{(k)}_i\mid 1\leq i\leq\zeta_n\}$. This implies in particular that $0<a^{(k)}_\delta<a^{(k)}_{\zeta_n}<d$. Moreover, there is no element of $\Delta_{\min}(d_k,u_k)$ between $\delta$ and $\zeta_n$. Now we argue by \emph{reductio ad absurdum}. Assume that $\zeta_n-\delta\notin \Delta_{\max}(d_k,u_k)$. Then there is an integer $1\leq j<\zeta_n-\delta$ such that $a^{(k)}_j>a^{(k)}_{\zeta_n-\delta}$. This implies one of the following: Either,
 $$a^{(k)}_\delta<a^{(k)}_{\zeta_n}<a^{(k)}_{\delta+j}<d,$$
 in contradiction to $a^{(k)}_{\zeta_n}=\max\{a^{(k)}_i\mid 1\leq i\leq\zeta_n\}$; or,
 $$0\leq a^{(k)}_{\delta+j}<a^{(k)}_\delta<a^{(k)}_{\zeta_n},$$
 in which case 
 $\Delta_{\min}(d_k,u_k)$ would contain an element between $\delta$ and $\zeta_n$, again a contradiction. Thus, the assumption was false and $\zeta_n-\delta\in \Delta_{\max}(d_k,u_k)$ as claimed.
 
Now we prove items 3b and 4b. For this it suffices to prove item 3b, because item 4b is dual to it.

Regarding item 3b, we argue by \emph{reductio ad absurdum}. Assume that for some $n$ with $1\leq n\leq r$ there were an element $\eta_n\in\Delta_{\max}(d_k,u_k)$ with $\zeta_n-\delta<\eta_n<\zeta_n$, and take the minimal such $\eta_n$. Then $\eta_n=\min\{\gamma\in \Delta_{\max}(d_k,u_k)\mid \gamma>\zeta_n-\delta\}$, and, using item 2a, we obtain $\rho_n:=\eta_n-(\zeta_n-\delta)\in\Delta_{\min}(d_k,u_k)$. Note that $\rho_n<\zeta_n-(\zeta_n-\delta)=\delta$. Using item 2b, we get the additional information that there is no element of $\Delta_{\min}(d_k,u_k)$ between $\rho_n$ and $\eta_n$. But by hypothesis $\delta\in\Delta_{\min}(d_k,u_k)$, and $\rho_n<\delta$. Thus necessarily $\zeta_n-\delta<\eta_n<\delta$. As a consequence, $\zeta_1-\delta<\eta_n<\zeta_1$. Thus, from now on we may assume that $n=1$. We also write $\eta:=\eta_1$ and $\rho:=\rho_1$.

We aim to show that, under the made assumption, $r=1$. To this end, if $r>1$,  consider $\tilde{\eta}:=\zeta_2-\delta>\zeta_1-\delta$. By item 3a, $\tilde{\eta}\in\Delta_{\max}(d_k,u_k)$. There cannot be any element $\hat{\eta}\in \Delta_{\max}(d_k,u_k)$ between $\zeta_1-\delta$ and $\tilde{\eta}$, as this would induce an element of $\Delta_{\max}(d_k,u_k)$ between $\zeta_1$ and $\zeta_2$. Hence, $\tilde{\eta}=\eta$, and $\rho=(\zeta_2-\delta)-(\zeta_1-\delta)=\zeta_2-\zeta_1$. However, $\rho<\delta<\zeta_2$, and this contradicts item 2b applied to $\zeta_1$ and $\zeta_2$. Thus indeed we must have $r=1$.

Thus, under the made assumption, $\zeta:=\zeta_1$ is the only element in $\Delta_{\max}(d_k,u_k)$ between $\delta$ and $\epsilon$. 
By item 1a, $\epsilon-\delta\in\Delta_{\max}(d_k,u_k)$. By item 1b, there is no element of $\Delta_{\max}(d_k,u_k)$ between $\epsilon-\delta$ and $\epsilon$. This implies $\epsilon\geq \zeta+\delta$. Consider the integer $\sigma:=\zeta+\rho$. Since $\zeta<\sigma<\epsilon$, $\sigma\notin \Delta_{\min}(d_k,u_k)\cup\Delta_{\max}(d_k,u_k)$. Hence, $a^{(k)}_\delta<a^{(k)}_\sigma<a^{(k)}_\zeta$; otherwise, $a^{(k)}_\sigma$ would be a maximal or minimal value. Moreover, also $a^{(k)}_\rho$ lies between $a^{(k)}_\delta$ and $a^{(k)}_\zeta$: $a^{(k)}_\rho>a^{(k)}_\delta$, because $\{\rho,\delta\}\subseteq\Delta_{\min}(d_k,u_k)$ and $\rho<\delta$; $a^{(k)}_\rho<a^{(k)}_\zeta$, because $\rho\neq \zeta$ and $\zeta\in \Delta_{\max}(d_k,u_k)$. Furthermore, $a^{(k)}_\sigma<a^{(k)}_\rho$; otherwise, $a^{(k)}_\rho<a^{(k)}_{\zeta+\rho}<a^{(k)}_\zeta$, implying $a^{(k)}_\zeta<a^{(k)}_\zeta$, which is absurd. Thus, we have: $a^{(k)}_\delta<a^{(k)}_{\zeta+\rho}<a^{(k)}_\rho<a^{(k)}_\zeta$; and, as $\rho=\eta-\zeta+\delta$, we get $a^{(k)}_\delta<a^{(k)}_{\delta+\eta}<a^{(k)}_\rho$. This implies, finally, that $a^{(k)}_\eta<a^{(k)}_\rho$, the desired contradiction (note that $\eta\in\Delta_{\max}(d_k,u_k)$ and $\rho\in\Delta_{\min}(d_k,u_k)$). Hence the assumption was false and 3b holds.
\end{proof}
From parts (1) and (2) of this last lemma one can deduce the following:
\begin{corollary}\label{cor:nextrelevant}
    If $\delta \in \Delta_{\min}(d_k,u_k)$ where $\theta= \max \{\gamma \in \Delta_{\max}(d_k,u_k) \mid \gamma \leq \delta\}$, then we have that $\delta + \theta$ is either the index of the immediate next minimum after $\delta$ in $\Delta_{\min}(d_k,u_k)$ or the index of the immediate next maximum after $\theta$ in $\Delta_{\max}(d_k,u_k)$. Furthermore, $\nexists \lambda\in \Delta_{\max}(d_k,u_k)\cup \Delta_{\min}(d_k,u_k)$ such that $\delta < \lambda < \delta+\theta$.\\
    The same holds true for any  $\delta \in \Delta_{\max}(d_k,u_k)$ where $\theta= \max \{\gamma \in \Delta_{\min}(d_k,u_k) \mid \gamma \leq \delta\}$.
\end{corollary}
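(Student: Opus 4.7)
The plan is a short case analysis using only items~(1) and (2) of Lemma~\ref{lem:MinMaxGeneralRelation}, exactly as the text hint suggests. Write $\epsilon_{\min}:=\min\{\gamma\in\Delta_{\min}(d_k,u_k)\mid\gamma>\delta\}$ and $\epsilon_{\max}:=\min\{\gamma\in\Delta_{\max}(d_k,u_k)\mid\gamma>\theta\}$. Since $\theta$ is the largest element of $\Delta_{\max}(d_k,u_k)$ not exceeding $\delta$, we have $\epsilon_{\max}>\delta$; and by Lemma~\ref{lem:ElementaryPropertiesegreeSet} we know $\Delta_{\min}(d_k,u_k)\cap\Delta_{\max}(d_k,u_k)=\{1\}$, so since $\epsilon_{\min}>\delta\geq 1$ and $\epsilon_{\max}>\theta\geq 1$, we must have $\epsilon_{\min}\neq\epsilon_{\max}$. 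Hence exactly one of $\epsilon_{\max}<\epsilon_{\min}$ and $\epsilon_{\min}<\epsilon_{\max}$ holds, producing the two cases below.

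In the first case ($\epsilon_{\max}<\epsilon_{\min}$), I apply item~(2) of Lemma~\ref{lem:MinMaxGeneralRelation} to $\theta$ and its successor $\epsilon_{\max}$ in $\Delta_{\max}(d_k,u_k)$: this yields $\epsilon_{\max}-\theta\in\Delta_{\min}(d_k,u_k)$ and no element of $\Delta_{\min}(d_k,u_k)$ in the open interval $(\epsilon_{\max}-\theta,\epsilon_{\max})$. If $\epsilon_{\max}-\theta>\delta$ held, this $\Delta_{\min}$-element would satisfy $\epsilon_{\max}-\theta\geq\epsilon_{\min}>\epsilon_{\max}$, an impossibility; and if $\epsilon_{\max}-\theta<\delta$, then $\delta\in\Delta_{\min}(d_k,u_k)$ would itself lie in the forbidden interval $(\epsilon_{\max}-\theta,\epsilon_{\max})$ (recall $\delta<\epsilon_{\max}$). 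Hence $\delta=\epsilon_{\max}-\theta$, i.e.\ $\delta+\theta=\epsilon_{\max}$.

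In the second case ($\epsilon_{\min}<\epsilon_{\max}$), I run the symmetric argument, applying item~(1) to $\delta$ and $\epsilon_{\min}$ to obtain $\epsilon_{\min}-\delta\in\Delta_{\max}(d_k,u_k)$ with no element of $\Delta_{\max}(d_k,u_k)$ in $(\epsilon_{\min}-\delta,\epsilon_{\min})$. The assumption $\epsilon_{\min}-\delta>\theta$ would force $\epsilon_{\min}-\delta\geq\epsilon_{\max}>\epsilon_{\min}$, which is absurd, while $\epsilon_{\min}-\delta<\theta$ would put $\theta$ (which satisfies $\theta\leq\delta<\epsilon_{\min}$) inside the forbidden open interval. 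Thus $\theta=\epsilon_{\min}-\delta$ and $\delta+\theta=\epsilon_{\min}$.

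The non-existence statement is then immediate in both cases: any $\lambda\in\Delta_{\min}(d_k,u_k)\cup\Delta_{\max}(d_k,u_k)$ with $\delta<\lambda<\delta+\theta$ would lie either in $\Delta_{\min}(d_k,u_k)$, contradicting the minimality of $\epsilon_{\min}$, or in $\Delta_{\max}(d_k,u_k)$ with $\lambda>\delta\geq\theta$, contradicting the minimality of $\epsilon_{\max}$. The dual statement for $\delta\in\Delta_{\max}(d_k,u_k)$ follows by the same argument with the roles of $\Delta_{\min}$ and $\Delta_{\max}$ interchanged. There is no serious obstacle; the only bookkeeping required is tracking which of $\epsilon_{\min}$ and $\epsilon_{\max}$ comes first and exploiting the defining maximality of $\theta$ twice.
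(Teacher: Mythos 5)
Your proof is correct and takes exactly the route the paper intends: the paper offers no written proof, merely asserting that the corollary follows from items (1) and (2) of Lemma~\ref{lem:MinMaxGeneralRelation}, and your case split on whether the next maximum or the next minimum comes first, combined with the two "no element in between" clauses and the maximality of $\theta$, is the intended deduction carried out in full. All the steps check out, so nothing further is needed.
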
 
Corollary \ref{cor:nextrelevant} tells us that, at any given point $i$ in the
sequence if we have that $a^{(k)}_\delta=\max\{a^{(k)}_1,\dots,a^{(k)}_i\}$ and
$a^{(k)}_\theta=\min\{a^{(k)}_1,\dots,a^{(k)}_i\}$, then either $\delta+\theta \in
\Delta_{\max}(d_k,u_k)$ or $\delta+\theta \in \Delta_{\min}(d_k,u_k)$ and there are
no elements between $i$ and $\delta+\theta$ in either $ \Delta_{\max}(d_k,u_k)$
or $ \Delta_{\min}(d_k,u_k)$. To put it simply, by adding the indices of the last maximum and the last minimum at a given point in  the sequence we get either the subindex of the next minimum or maximum in the sequence. 
\begin{lemma}\label{lem:ComplementarySeq}
    Let $i>1$ be the smallest integer which satisfies $b^{(1)}_i-b^{(1)}_{i-1}=b^{(2)}_i-b^{(2)}_{i-1}$. Then $ \Delta_{\min}(d_1,u_1)\cap [i-1]= \Delta_{\max}(d_2,u_2)\cap [i-1]$, and $ \Delta_{\max}(d_1,u_1)\cap [i-1]= \Delta_{\min}(d_2,u_2)\cap [i-1]$. In other words, for $j<i$ every time $a^{(1)}_j$ is a maximum on the sequence, $a^{(2)}_j$ is a minimum on the other one, and the other way around.
\end{lemma}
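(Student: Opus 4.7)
The plan is a strong induction on $j$ establishing jointly, for every $j\leq i-1$, the two biconditionals $j\in\Delta_{\max}(d_1,u_1)\Leftrightarrow j\in\Delta_{\min}(d_2,u_2)$ and $j\in\Delta_{\min}(d_1,u_1)\Leftrightarrow j\in\Delta_{\max}(d_2,u_2)$. The base case $j=1$ is immediate, since $1$ belongs to all four sets.

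The first step is to extract from the hypothesis the coupling identity $b^{(1)}_j+b^{(2)}_j=j-1$ for every $j\leq i-1$. Writing $c^{(k)}_l:=b^{(k)}_l-b^{(k)}_{l-1}$, the bounds $0<u_k<d_k$ imply $b^{(k)}_1=0$ and $c^{(k)}_l\in\{0,1\}$, and the minimality of $i$ forces $c^{(1)}_l+c^{(2)}_l=1$ for $2\leq l\leq i-1$; telescoping yields the identity.

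For the inductive step at some $j\leq i-1$, I would use Corollary~\ref{cor:nextrelevant} to jump to the next relevant index. Let $r$ be the largest element of $\Delta_{\max}(d_1,u_1)\cup\Delta_{\min}(d_1,u_1)$ with $r\leq j-1$, and let $s\leq r$ be the largest index of the complementary type for $a^{(1)}$. By the induction hypothesis, $r$ and $s$ play the swapped roles for $a^{(2)}$, so Corollary~\ref{cor:nextrelevant} applied to both $k=1,2$ identifies the same next relevant index $r+s$ for each sequence and rules out any relevant index strictly between $r$ and $r+s$. Consequently the biconditionals hold vacuously for $r<j<r+s$, and the only nontrivial case is $j=r+s\leq i-1$.

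At $j=r+s$, I would write $a^{(k)}_{r+s}=a^{(k)}_r+a^{(k)}_s-\alpha_k d_k$ with $\alpha_k\in\{0,1\}$. Since the running extrema of $a^{(k)}$ are still attained at $r$ and $s$ just before step $r+s$, a direct comparison shows that $r+s$ belongs to $\Delta_{\max}(d_k,u_k)$ exactly when $\alpha_k=0$ and to $\Delta_{\min}(d_k,u_k)$ exactly when $\alpha_k=1$. From $a^{(k)}_j=u_k j-b^{(k)}_j d_k$ one reads $b^{(k)}_{r+s}=b^{(k)}_r+b^{(k)}_s+\alpha_k$, and applying the coupling identity at the three indices $r$, $s$, and $r+s$ forces $\alpha_1+\alpha_2=1$. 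Hence exactly one of $\alpha_1,\alpha_2$ is $1$, which is precisely the statement that $r+s$ is a maximum of one sequence iff a minimum of the other, closing the induction. The only real obstacle is spotting the coupling identity $b^{(1)}_j+b^{(2)}_j=j-1$; thereafter the argument is a mechanical assembly via Corollary~\ref{cor:nextrelevant}.
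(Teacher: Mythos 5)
Your proof is correct and follows essentially the same route as the paper: an induction over the synchronized ``relevant'' indices produced by Corollary~\ref{cor:nextrelevant}, using the minimality of $i$ to force opposite extremum types at each such index. The only difference is one of rigor --- where the paper handles $j=2$ explicitly and then says ``repeat the process,'' you make the general inductive step precise via the coupling identity $b^{(1)}_j+b^{(2)}_j=j-1$ and the relation $b^{(k)}_{r+s}=b^{(k)}_r+b^{(k)}_s+\alpha_k$, which is a faithful formalization of the paper's argument.
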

\begin{proof}
    Let us first assume $i>1$ is the smallest integer which satisfies $b^{(1)}_i-b^{(1)}_{i-1}=b^{(2)}_i-b^{(2)}_{i-1}$. By Lemma \ref{lem:ElementaryPropertiesegreeSet} note that $1$ is in all Deltas, since at $j=1$ we must have both a maximum and a minimum because there are no other elements. Hence, by Lemma \ref{lem:ComplementarySeq} at $1+1=2$ we must have either a maximum or a minimum in both sequences. If both $a^{(1)}_2$ and $a^{(2)}_2$ were maxima on their respective sequence, then we are done because $b^{(1)}_2-b^{(1)}_{1}=0$ and $b^{(2)}_2-b^{(2)}_{1}=0$; so, $i=2$. This is clear since by assumption $u_k<d_k$. Similarly, if both $a^{(1)}_2$ and $a^{(2)}_2$ were minima on their sequence $b^{(1)}_2-b^{(1)}_{1}=b^{(2)}_2-b^{(2)}_{1}=1$; so, $i=2$ and we are done. Thus, the only case left is if one is a maximum and the other a minimum, in which case we repeat the process by adding the positions of the "updated" last maxima and  minima. Note that we get the same outcome from these sums in both sequences since up until this point the sequences are complementary (in terms of maxima and minima) of each other, leading to the same scenario as described at $j=2$. 
    
\end{proof}

\begin{proposition}\label{prop:general1}
There are no essential binomials involving elements of $G^{(t)}(I)$ for $t>q$.  
\end{proposition}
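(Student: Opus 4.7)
My plan is to show that, for $t>q$, every candidate essential binomial at level $t$ lies in the ideal generated by essential binomials of strictly smaller $\mathbf{X}$-degree, so is not a minimal generator of $\Rk(I)$. By Lemma~\ref{lemma:consecutive} and the paragraph preceding the proposition, every such candidate corresponds to a lex-consecutive pair in $G^{(t)}(I)$ in which one element is $\psi(X_1^t)$. Without loss of generality assume $q = d_1/\gcd(d_1,u_1)$ and set $p := u_1/\gcd(d_1,u_1) = b^{(1)}_q$. The identity $a^{(1)}_q = 0$ rewrites as $u_1 q = p\,d_1$, so $\psi(X_1^q) = T_0^{p d_1} T_1^{u_2 q}$ and $\psi(X_0^p X_2^{q-p}) = T_0^{p d_1} T_1^{d_2(q-p)}$. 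Comparing $T_1$-exponents produces a binomial $B_q \in \Rk(I)$ of the form $\sigma X_1^q - \tau X_0^p X_2^{q-p}$, where $\sigma, \tau$ are coprime pure $T_1$-monomials whose sign structure is determined by that of $u_2 q - d_2(q-p)$.

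I would first identify the lex-neighbor of $\psi(X_1^t)$ within its own $T_0$-class: the element $\psi(X_0^p X_1^{t-q} X_2^{q-p})$ also has $T_0$-exponent $u_1 t$, is distinct from $\psi(X_1^t)$ since $t - q \geq 1$, and a direct computation shows $\psi(X_1^t)$ lies at a lex-extreme of its $T_0$-class (the $T_1$-exponent depends affinely on a single integer parameter, with nonzero slope $u_2 q - d_2(q-p)$). The binomial attached to this adjacency is then precisely $X_1^{t-q} \cdot B_q$, which is non-minimal. For the remaining lex-neighbor (with $T_0$-exponent distinct from $u_1 t$), if its $\psi'_t$-preimage involves $X_1$ positively then the corresponding binomial factors as $X_1$ times an element of $\Rk(I)$ of $\mathbf{X}$-degree $t - 1$, and is non-minimal. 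The only remaining case is when the preimage is $X_0^{a'} X_2^{c'}$ with $a' + c' = t$. Here I would exploit the fact that $t - b^{(1)}_t$ is non-decreasing in $t$ (it increases by either $0$ or $1$ as $t$ increases by $1$) and equals $q - p$ at $t = q$, to deduce $a' \geq p$ and $c' \geq q - p$, regardless of which side of $\psi(X_1^t)$ the neighbor lies on. Then adding a suitable $S$-multiple of $X_1^{t-q} \cdot B_q$ to the candidate binomial produces a difference whose two $\mathbf{X}$-terms both contain the factor $X_0^p X_2^{q-p}$; cancelling this factor yields a binomial of strictly smaller $\mathbf{X}$-degree in $\Rk(I)$, so the original candidate is an $S$-linear combination of $X_1^{t-q} B_q$ and a lower-degree element of $\Rk(I)$.

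The main obstacle is this last reduction step: one must verify that the pure $T$-monomial coefficients in Lemma~\ref{lemma:consecutive} align with those from $B_q$ so as to yield an actual common $\mathbf{X}$-divisibility after the substitution, in both sign regimes of $u_2 q - d_2(q - p)$. The required bookkeeping reduces to the inequalities $a' \geq p$ and $c' \geq q - p$ above, which themselves follow from the combinatorial structure captured by Corollary~\ref{cor:nextrelevant}.
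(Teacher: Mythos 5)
Your proposal follows essentially the same route as the paper: it constructs the level-$q$ divisibility binomial $B_q$, notes that every candidate at level $t>q$ is either $X_1^{t-q}\cdot B_q$ itself, has a common $X_1$-factor, or pairs $X_1^t$ with a pure $X_0,X_2$ product whose exponents dominate $(p,q-p)$, and then reduces modulo $X_1^{t-q}\cdot B_q$ --- which is exactly the paper's argument with the same key inequalities. The one point to tighten is that for the neighbour on the side with $c'=t-b^{(1)}_t-1$ the non-strict monotonicity of $t\mapsto t-b^{(1)}_t$ only gives $c'\geq q-p-1$; you need the strict bound $b^{(1)}_t-b^{(1)}_q=\lfloor u_1(t-q)/d_1\rfloor<t-q$ (valid since $u_1<d_1$), which is precisely the elementary inequality $\alpha-\beta\geq b^{(1)}_\alpha-b^{(1)}_\beta$ that the paper invokes, rather than a consequence of Corollary~\ref{cor:nextrelevant}.
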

\begin{proof}
At $t=q$ we have that $a^{(1)}_t=0$, and/or $a^{(2)}_t=0$ (depending on whether $q=d_1/\gcd(d_1,u_1)$ and/or $q=d_2/\gcd(d_2,u_2)$); thus, $\deg_{T_0}\psi(X_1^t)=\deg_{T_0}\psi(X_0^{b^{(1)}_t}X_2^{t-b^{(1)}_t})$, and/or $\deg_{T_1}\psi(X_1^t)=\deg_{T_1}\psi(X_0^{t-b^{(2)}_t}X_2^{b^{(2)}_t})$. Without loss of generality let us assume that $q=d_1/\gcd(d_1,u_1)$, which means that at least $a^{(1)}_q=0$ or equivalently  $\deg_{T_0}\psi(X_1^q)=u_1\cdot q=\deg_{T_0}(X_0^{b^{(1)}_q}X_2^{q-b^{(1)}_q})=d_1\cdot b^{(1)}_q$. This fact implies that:
\begin{subequations}
\begin{align}
   &  T_1^{\vert r\vert}X_1^q - X_0^{b^{(1)}_q}X_2^{q-b^{(1)}_q} \in \Rk(I)  \text{  or  }\label{syz_proof_a} \\
   &  T_1^{\vert r\vert}X_0^{b^{(1)}_q}X_2^{q-b^{(1)}_q} - X_1^q  \in  \Rk(I) \label{syz_proof_b} ,
\end{align}
\end{subequations}
for some $\vert r\vert=\vert d_2(q-b^{(1)}_q)-u_2q \vert$. If $r\geq 0$ we are in the first case, otherwise we are in the second case.

Let now $t>q$ i.e. $t=q+j$ for some $j \in \mathbb{Z}^+$. From Lemma \ref{lemma:consecutive} we know that any essential binomial whose monomials correspond to elements of $G^{(t)}(I)$ must come from some lexicographically consecutive pair of elements of $G^{(t)}(I)$, we also know that one of them must correspond to the pure power $t$ of $X_1$ and the other one to some combination of only $X_0$ and $X_2$. Since $d_1\cdot (b^{(1)}_{q+j-1}+1) \geq u_1 \cdot (q+j) \geq d_1 \cdot b^{(1)}_{q+j-1}$ we know that the generator lexicographically closest to $\psi(X_1^{q+j})$ satisfying the previous condition are $\psi \big( X_0^{b^{(1)}_{q+j-1}+1}X_2^{q+j-(b^{(1)}_{q+j-1}+1)} \big)$ and $\psi \big( X_0^{b^{(1)}_{q+j-1}}X_2^{q+j-(b^{(1)}_{q+j-1})} \big)$ yielding the following  potentially essential binomials:
\begin{align*}
  &T_1^{r_1}X_0^{b^{(1)}_{q+j-1}+1}X_2^{q+j-(b^{(1)}_{q+j-1}+1)} -  T_0^{r_2}\cdot X_1^{q+j},\text{ and }\\
  &T_1^{r'_1}\cdot X_1^{q+j} - T_0^{r'_2}X_0^{b^{(1)}_{q+j-1}}X_2^{q+j-b^{(1)}_{q+j-1}}, 
\end{align*}

where $r_1 = u_2\cdot (q+j)- d_2 \cdot \big(q+j-(b^{(1)}_{q+j-1}+1)\big)$, $r_2=d_1\cdot (b^{(1)}_{q+j-1}+1)-u_1 \cdot (q+j)$, $r'_1=d_2\cdot \big( q+j -(b^{(1)}_{q+j-1}) \big) -u_2 \cdot (q+j)$, and $r'_2=u_1\cdot (q+j)- d_1 \cdot (b^{(1)}_{q+j-1})$.

Note that if for $t=q$ we have that the binomial (\ref{syz_proof_b}), i.e. $T_1^{\vert r\vert} X_0^{b^{(1)}_q}X_2^{q-b^{(1)}_q} - X_1^q$, is in $\Rk(I)$, then one can express the term containing the pure power $X_1^{q+j}$ as a combination of all  $X_0,X_1$ and $X_2$ in both of the potentially essential binomials, henceforth guaranteeing a common factor between the two terms in both binomials. On the other hand, if we have that the element (\ref{syz_proof_a}), i.e $\Big( T_1^{r}\cdot X_1^q - X_0^{b^{(1)}_q}X_2^{q-b^{(1)}_q} \Big)$ is in $\Rk(I)$, then we need to see that $X_0^{b^{(1)}_q}X_2^{q-b^{(1)}_q}$ divides both $X_0^{b^{(1)}_{q+j-1}+1}X_2^{q+j-(b^{(1)}_{q+j-1}+1)}$ and $X_0^{b^{(1)}_{q+j-1}}X_2^{q+j-b^{(1)}_{q+j-1}}$, in order to demonstrate that there exists a common factor $X_1$  between the terms in the binomials. In other words, we need to show that $b^{(1)}_q \leq b^{(1)}_{q+j-1}$ and $q-b^{(1)}_q \leq q+j-(b^{(1)}_{q+j-1}+1)$. The first inequality is pretty straightforward since $b^{(1)}_\alpha \geq b^{(1)}_\beta$ for any $\alpha \geq \beta$ by definition, hence, $b^{(1)}_q \leq b^{(1)}_{q+j-1}$ since we assumed $j\in \mathbb{Z}^+$. For the second inequality, note that one can easily deduce from the definitions that $\alpha - \beta \geq b^{(1)}_\alpha - b^{(1)}_\beta$ for any two $\alpha,\beta \in \mathbb{Z}^+_0$ such that $\alpha \geq \beta$. Since $q+j-1\geq q$ and, as shown above, $b^{(1)}_{q+j-1} \geq b^{(1)}_q$, then we can write the following inequality:
\begin{align*}
    & (q+j-1)- q \geq b^{(1)}_{q+j-1} -b^{(1)}_q \\
    \hspace{-1.5cm} \Leftrightarrow \hspace{1cm} & \hspace{1.46cm} b^{(1)}_q - q \geq b^{(1)}_{q+j-1} - (q+j) + 1  \\
    \hspace{-1.5cm} \Leftrightarrow \hspace{1cm} & \hspace{1.46cm} q-b^{(1)}_q \leq q+j-(b^{(1)}_{q+j-1}+1)
\end{align*}

This concludes the proof, since it shows that the only two possible binomials whose monomials correspond to elements in $G^{(t)}(I)$ for $t>q$ that could potentially be essential are indeed non-essential.
\end{proof}
\begin{proposition}\label{prop:generalLimit}
    If $b^{(1)}_i-b^{(1)}_{i-1}=b^{(2)}_i-b^{(2)}_{i-1}$ for some integer $i$, then there do not exist any essential binomials involving elements of $G^{(t)}(I)$ for any $t>i$.
\end{proposition}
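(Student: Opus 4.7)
My plan is to mirror the proof of Proposition \ref{prop:general1}, with the index $i$ playing the role of $q$. That proof proceeded in two stages: it first produced a specific binomial (\ref{syz_proof_a}) or (\ref{syz_proof_b}) in $\Rk(I)$ at level $t=q$, exploiting $a^{(1)}_q=0$ (or $a^{(2)}_q=0$); then, for $t=q+j$, it listed the two lex-consecutive candidate binomials around $X_1^{q+j}$, rewrote $X_1^{q+j}=X_1^j\cdot X_1^q$ via the level-$q$ relation, and used monotonicity of $b^{(1)}$ to reveal a common factor, ruling out essentiality. I expect the same bookkeeping to carry over as soon as an analogous level-$i$ relation is identified.

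First I would exhibit an essential binomial at level $t=i$ obtained from the lex-consecutive pair of $X_1^i$ with either $X_0^{b^{(1)}_i}X_2^{i-b^{(1)}_i}$ or $X_0^{b^{(1)}_i+1}X_2^{i-b^{(1)}_i-1}$, dressed with the appropriate monomial in $T_0,T_1$. The hypothesis $b^{(1)}_i-b^{(1)}_{i-1}=b^{(2)}_i-b^{(2)}_{i-1}$ splits into two sub-cases (both differences equal $0$, or both equal $1$); in each, combined with Lemma \ref{lem:ComplementarySeq}—which guarantees that the remainder sequences $a^{(1)}$ and $a^{(2)}$ are complementary through index $i-1$ and that the complementary pattern first fails at index $i$—the candidate binomial turns out to have no common $T$- or $X$-factor, and is therefore essential, just like (\ref{syz_proof_a}) and (\ref{syz_proof_b}) at level $q$.

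Next, for any $t=i+j$ with $j\geq1$, I would list the two potentially essential binomials involving $X_1^{i+j}$ paired with the $X_0,X_2$-only monomials $X_0^{b^{(1)}_{i+j-1}+1}X_2^{i+j-(b^{(1)}_{i+j-1}+1)}$ and $X_0^{b^{(1)}_{i+j-1}}X_2^{i+j-b^{(1)}_{i+j-1}}$, with their appropriate $T_0,T_1$-coefficients, following the recipe of Proposition \ref{prop:general1}. Substituting the level-$i$ relation into $X_1^{i+j}=X_1^j\cdot X_1^i$ (or, in the mirror sub-case, substituting in the other direction) exhibits a common factor in each candidate. The two divisibility checks $b^{(1)}_i\leq b^{(1)}_{i+j-1}$ and $i-b^{(1)}_i\leq i+j-(b^{(1)}_{i+j-1}+1)$ that close the argument are verified exactly as in the final lines of the proof of Proposition \ref{prop:general1}, from the monotonicity of $b^{(1)}$ and the inequality $b^{(1)}_n-b^{(1)}_m\leq n-m$ for $n\geq m$. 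The principal obstacle will be the first step: pinning down the precise exponents in the level-$i$ essential binomial, since the hypothesis is less transparent than the clean $a^{(k)}_q=0$ of Proposition \ref{prop:general1}, and the verification that the resulting binomial has no common factor requires the case analysis $c=0$ versus $c=1$ using the structural information provided by Lemmas \ref{lem:MinMaxGeneralRelation} and \ref{lem:ComplementarySeq}.
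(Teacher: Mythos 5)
Your proposal follows essentially the same route as the paper: identify the level-$i$ relation by splitting the hypothesis into the two sub-cases (both differences $0$, giving the paper's relation (\ref{rel_syz1}), or both $1$, giving (\ref{rel_syz2})), and then repeat the rewriting-and-divisibility argument of Proposition \ref{prop:general1} for every $t>i$ — which is exactly what the paper does, if anything more tersely than you propose. One small correction: the level-$i$ binomial need only be shown to lie in $\Rk(I)$ (it need not be, and when $i>q$ is not, essential), which is all the paper claims for (\ref{rel_syz1})--(\ref{rel_syz2}) and all the subsequent factoring argument requires.
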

\begin{proof}
Let $i>1$ be the smallest integer such that $b^{(1)}_i-b^{(1)}_{i-1}=b^{(2)}_i-b^{(2)}_{i-1}$. For each $j<i$ we have that either $b^{(1)}_j=b^{(1)}_{j-1}+1$ or $b^{(2)}_j=b^{(2)}_{j-1}+1$, and since $b^{(1)}_1=b^{(2)}_1=0$ then $b^{(1)}_j+b^{(2)}_j=j-1$ for every $j<i$. Hence it is straightforward to see that $\psi \big( X_0^{b^{(1)}_j +1}X_2^{b^{(2)}_j}\big)$ and $\psi\big( X_0^{b^{(1)}_j}X_2^{b^{(2)}_j+1}\big)$ are the two elements of $G^{(j)}(I)$ that can be written as a combination of powers of $X_0$ and $X_2$, which are lexicographically closest to $\psi \big(X_1^j\big)$, for $1<j<i$. This points to the only 2 pairs of monomials in $G^{(j)}(I)$ that can form a candidate for the essential binomials.

Now, for $G^{(i)}$, consider the two possible cases:
   \begin{enumerate}[i)]
        \item {
            Let $b^{(1)}_i-b^{(1)}_{i-1}=b^{(2)}_i-b^{(2)}_{i-1}=0$, then we have that $\psi \big(X_1^i\big)$ is the next lexicographically ordered element after $\psi \big( X_0^{b^{(1)}_i +1}X_2^{b^{(2)}_i+1}\big)$, where:
            $$\deg_{T_0}\big( \psi \big( X_0^{b^{(1)}_i +1}X_2^{b^{(2)}_i+1}\big)\big) > \deg_{T_0}\big( \psi\big( X_1^i\big)\big)$$ and $$\deg_{T_1}\big( \psi \big( X_0^{b^{(1)}_i +1}X_2^{b^{(2)}_i+1}\big)\big) > \deg_{T_1}\big( \psi \big( X_1^i\big)\big).$$
            Hence, we have that the following binomial is in $\Rk(I)$:
             \begin{equation}\label{rel_syz1}
             T_0^{d_1 (b^{(1)}_i +1)-u_1 i}T_1^{ d_2 (b^{(2)}_i +1)-u_2 i } X_1^i - X_0^{b^{(1)}_i+1}X_2^{b^{(2)}_i+1} .
             \end{equation}
             }
 
  \item Let $b^{(1)}_i-b^{(1)}_{i-1}=b^{(2)}_i-b^{(2)}_{i-1}=1$,  then we have that $\psi \big( X_0^{b^{(1)}_i +1}X_2^{b^{(2)}_i+1}\big)$ is the next lexicographically ordered element after $\psi \big(X_1^i\big)$, where:
        $$\deg_{T_0}\big( \psi \big( X_0^{b^{(1)}_i }X_2^{b^{(2)}_i}\big)\big) \leq \deg_{T_0}\big( \psi \big( X_1^i\big)\big)$$ and $$\deg_{T_1}\big( \psi \big( X_0^{b^{(1)}_i }X_2^{b^{(2)}_i}\big)\big) \leq \deg_{T_1}\big( \psi \big( X_1^i\big)\big).$$
    Hence, the following binomial is in $\Rk(I)$:
    \begin{equation}\label{rel_syz2}
         T_0^{u_1 i-d_1 b^{(1)}_i}T_1^{ u_2 i - d_2 b^{(2)}_i } X_0^{b^{(1)}_i}X_2^{b^{(2)}_i} -X_1^i.
    \end{equation}
   \end{enumerate}

The rest of the proof follows the lines of the proof of Proposition \ref{prop:general1}. There are no essential binomials in $G^{(t)}(I)$, $t>i$ since any relation between elements in $G^{(t)}(I)$ for $t>i$ can be expressed in terms of either (\ref{rel_syz1}) or (\ref{rel_syz2}).

\end{proof}
\begin{proposition}\label{prop:general3}
    Let $i$ be the smallest integer which satisfies  $b^{(1)}_i-b^{(1)}_{i-1}=b^{(2)}_i-b^{(2)}_{i-1}$ and let $j < \min\{i,q\}$.
    
    There exists an essential binomial in $G^{(j)}(I)$ of the form: 
    $$ T_1^{a^{(2)}_j}\psi(X_0^{b^{(1)}_j+1}X_2^{b^{(2)}_j})-T_0^{d_1-a^{(1)}_j}\psi(X_1^j)$$
    if and only if $a^{(1)}_j= \max \{ a^{(1)}_k | k=1,\dots,j\}$ and $a^{(2)}_j= \min \{ a^{(2)}_k | k=1,\dots,j\}$. \\
    Similarly, there exists an essential binomial in $G^{(j)}(I)$ of the form:
    $$T_1^{d_2-a^{(2)}_j}\psi(X_1^j) - T_0^{a^{(1)}_j}\psi(X_0^{b^{(1)}_j}X_2^{b^{(2)}_j+1})$$
    if and only if $a^{(1)}_j= \min \{ a^{(1)}_k | k=1,\dots,j\}$ and $a^{(2)}_j= \max \{ a^{(2)}_k | k=1,\dots,j\}$.
    
\end{proposition}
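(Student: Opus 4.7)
The strategy is to verify the proposed binomial lies in $\Rk(I)$, determine the lex-upper neighbor of $\psi(X_1^j)$ in $G^{(j)}(I)$, and then combine this with Lemma~\ref{lemma:consecutive} and the discussion preceding the Notation block to resolve the ``if and only if.'' The second form of the binomial is obtained by a symmetric argument using the lex-lower neighbor of $\psi(X_1^j)$, with the roles of the pairs $(d_1,u_1)$ and $(d_2,u_2)$ and of maxima and minima interchanged; I focus on the first form. Membership in $\Rk(I)$ follows from a direct computation: using $u_1 j = d_1 b^{(1)}_j + a^{(1)}_j$ and $u_2 j = d_2 b^{(2)}_j + a^{(2)}_j$, both terms of the binomial map under $\phi$ to $T_0^{d_1(b^{(1)}_j+1)} T_1^{u_2 j} z^j$. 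Moreover, the identity $b^{(1)}_j + b^{(2)}_j = j-1$ for $j<i$, established inside the proof of Proposition~\ref{prop:generalLimit}, guarantees that $X_0^{b^{(1)}_j+1} X_2^{b^{(2)}_j}$ has $\mathbf{X}$-degree exactly $j$ and hence lies in $G^{(j)}(I)$.

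Next I would identify the lex-upper neighbor of $\psi(X_1^j)$ in $G^{(j)}(I)$, parametrizing its elements by triples $(a,b,c)$ with $a+b+c=j$ and $T_0$-degree $d_1 a + u_1 b$. Setting $s=j-b$, the requirement $d_1 a + u_1 b > u_1 j$ forces $a \geq b^{(1)}_s+1$, and the smallest $T_0$-degree at fixed $s$ is $u_1 j + (d_1 - a^{(1)}_s)$, attained at $(a,c)=(b^{(1)}_s+1,\,s-b^{(1)}_s-1)$. Minimizing over $1\leq s\leq j$, the lex-upper neighbor of $\psi(X_1^j)$ corresponds to the triple $(b^{(1)}_{s^*}+1,\,j-s^*,\,s^*-b^{(1)}_{s^*}-1)$, where $s^*$ is any index in $\{1,\dots,j\}$ at which $a^{(1)}_s$ attains its maximum. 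This neighbor equals $\psi(X_0^{b^{(1)}_j+1}X_2^{b^{(2)}_j})$ precisely when $s^*=j$, that is, when $a^{(1)}_j$ is the running maximum of $a^{(1)}_1,\dots,a^{(1)}_j$.

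To finish both directions: if $a^{(1)}_j$ is the running maximum, the two monomials of the proposed binomial are lex-consecutive in $G^{(j)}(I)$ with coprime $\mathbf{X}$-parts (one is a pure power of $X_1$, the other is free of $X_1$), so by the paragraph preceding the Notation block the binomial has exactly the shape an essential generator at level $j$ on the upper side must take, and is therefore indispensable. Conversely, if $a^{(1)}_j$ is not the running maximum, then $s^*<j$, the true lex-upper neighbor carries the positive power $X_1^{j-s^*}$, and the proposed binomial is not between lex-consecutive elements, so Lemma~\ref{lemma:consecutive} excludes it from every minimal generating set. Lemma~\ref{lem:ComplementarySeq} shows that for $j<i$ the two conditions ``$a^{(1)}_j$ is the running maximum'' and ``$a^{(2)}_j$ is the running minimum'' are equivalent, so the proposition's joint hypothesis is self-consistent. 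The delicate point is the sufficiency direction, since Lemma~\ref{lemma:consecutive} gives only a necessary condition for essentiality; to close it I would argue indispensability directly, using that every previously constructed essential generator of $\Rk(I)$ splits its two monomials as a pure power of $X_1$ versus a $\mathbf{T}$-multiple of a product in $X_0, X_2$ only, so that any putative representation $\sum_k f_k G_k$ of the binomial at strictly lower $\mathbf{X}$-degree forces incompatible exponent constraints between the $X_1$-free term $T_1^{a^{(2)}_j} X_0^{b^{(1)}_j+1} X_2^{b^{(2)}_j}$ and the $X_0,X_2,T_1$-free term $T_0^{d_1-a^{(1)}_j} X_1^j$, whose $\mathbf{T}$-parts are themselves coprime.
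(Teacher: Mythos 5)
Your proposal is correct and takes essentially the same approach as the paper: both arguments reduce to a single sequence via Lemma~\ref{lem:ComplementarySeq} and then identify the lexicographic neighbour of $\psi(X_1^j)$ in $G^{(j)}(I)$, showing it equals the $X_1$-free candidate $\psi(X_0^{b^{(1)}_j+1}X_2^{b^{(2)}_j})$ exactly when $a^{(1)}_j$ is the running maximum. Your closed-form minimization over $s=j-b$ merely packages the paper's two \emph{reductio ad absurdum} steps into one computation, and your explicit flagging of the sufficiency direction is, if anything, more careful than the paper's own treatment of that point.
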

\begin{proof}
    Note that due to Lemma \ref{lem:ComplementarySeq} we know that for all $j<i$, $a^{(2)}_j$ is a minimum on its sequence whenever $a^{(1)}_j$ is a maximum on the other one, and similarly $a^{(2)}_j$ is a maximum whenever $a^{(1)}_j$ is a minimum; and the other way around. Since we assumed $j<\min\{q,i\}\leq i$, we only need show the statements hold for one of the sequences.

    Let us consider the case where $a^{(1)}_j$ is a maximum for some $j < \min\{i,q\}$, the case that $a^{(1)}_j$ is a minimum is analogous. 

    As we have already mentioned, an essential binomial comes from a relation of a consecutive pair of monomials in $G^{(j)}(I)$ where one of the two is the monomial $h=\psi(X_1^j)=T_0^{u_1\cdot j}\cdot T_1^{u_2\cdot j}$.\\
    Since  $a^{(1)}_j= u_1\cdot i \pmod{d_1}$, we have that $u_1\cdot j = d_1 \cdot k + a^{(1)}_j$ for some $k$. Hence $k=b^{(1)}_j$, and $\deg_{T_0}(h)=d_1 \cdot b^{(1)}_j + a^{(1)}_j$. Correspondingly, $\deg_{T_1}(h)=d_2\cdot b^{(2)}_j+a^{(2)}_j$. Since we know that $b^{(1)}_j+b^{(2)}_j=j-1$ then $b^{(2)}_j=j-(b^{(1)}_j+1)$, and $\deg_{T_1}(h)=d_2\cdot(j-(b^{(1)}_j+1))+a^{(2)}_j$.
    
   Let $h'$ be the immediate previous element in $G(I^j)$ i.e. $h' \prec_{lex}h$, and $\nexists \hat{h} \in G^{(j)}(I)$ such that $h' \prec_{lex} \hat{h} \prec_{lex}h$. Recall that $\psi'_t(h')= X_0^aX_1^bX_2^c$ where $(a,b,c)$ is the unique 3-tuple such that $a+b+c=j$. Observe that the essential binomial coming from the pair $(h',h)$ is relevant if $b=0$. Thus, we have that $\deg_{T_0}(h')=d_1\cdot a$ and so we get:
    $$d_1 \cdot b^{(1)}_j + a^{(1)}_j < d_1\cdot a$$
    Note that $b^{(1)}_j \leq \frac{u_1}{d_1}\cdot j$, and by assumption $u_1<d_1$; hence, we have that $b^{(1)}_j<j$. Thus, there exists an element $g \in G^{(j)}(I)$ corresponding to the 3-tuple $\big(b^{(1)}_j+1,0,j-(b^{(1)}_j+1) \big)=\big(b^{(1)}_j+1,0,b^{(2)}_j)$ with $\deg_{T_0}(g)=d_1 \cdot b^{(1)}_j + d_1$, which implies $g\prec_{lex} h$. We must show that $g=h'$, i.e.  $\nexists \hat{h} \in G^{(j)}(I)$ such that $\deg_{T_0}(g) >\deg_{T_0}( \hat{h}) >\deg_{T_0}(h)$ if and only if $a^{(1)}_j = \max\{a^{(1)}_k\vert k=1,\dots, j\}$. In order to get a contradiction on the first direction let us assume that there exists such an element $\hat{h}\in G(I^j)$, with $\psi'_t(\hat{h})= X_0^{\hat{a}}X_1^{\hat{b}}X_2^{\hat{c}}$. Then, we have that:
    $$d_1 \cdot b^{(1)}_j + a^{(1)}_j < d_1\cdot \hat{a}+u_1\cdot \hat{b} < d_1 \cdot b^{(1)}_j + d_1$$
    for some $\hat{a},\hat{b}\geq 0$ such that $\hat{a}+\hat{b}\leq j$.
    However, if we subtract $d_1\cdot b^{(1)}_j$ from every side of the inequality we get that:
    $$a^{(1)}_j< d_1\cdot (\hat{a}-b^{(1)}_j)+u_1\cdot \hat{b}< d_1.$$
    Since the difference between $a^{(1)}_j$ and $d_1$ is by definition smaller that $d_1$ this implies $\hat{a}=b^{(1)}_j$ and $u_1\cdot\hat{b} \pmod{d_1} >a^{(1)}_j$ where $\hat{b}<j$ (because $\hat{b}=j$ implies that $h=\hat{h}$). This contradicts our assumption that $a^{(1)}_j = \max\{a^{(1)}_k\vert k=1,\dots, j\}$.
    
    For the other direction, let's assume there is some $p<j$ with $d_1>a^{(1)}_p>a^{(1)}_j$. By adding $d_1\cdot b^{(1)}_j$ to the inequality we get
    $$d_1 \cdot b^{(1)}_j + a^{(1)}_j < d_1\cdot b^{(1)}_j +a^{(1)}_p < d_1 \cdot b^{(1)}_j + d_1$$
    which can be rewritten, using that $a^{(1)}_p=u_1\cdot p -d_1 \cdot b^{(1)}_p$, as:
    $$d_1 \cdot b^{(1)}_j + a^{(1)}_j < d_1\cdot (b^{(1)}_j-b^{(1)}_p) + u_1\cdot p < d_1 \cdot b^{(1)}_j + d_1$$
    which implies that exists an element $\hat{h}\in G^{(j)}(I)$ such that 
    \begin{displaymath}
        \psi'_j(\hat{h})= X_0^{j-p-(b^{(1)}_j-b^{(1)}_p) } X_1^p X_2^{(b^{(1)}_j-b^{(1)}_p)}\,.
    \end{displaymath}
    This element sits lexicographically between $h$ and $h'$ and thus, by Lemma \ref{lemma:consecutive}, there does not exist such an essential binomial in $\Rk(I)$ corresponding to the pair $(h',h)$.
    
    We can then claim  that there does not exist any element in $G^{(j)}(I)$, located between $\psi \big( X_0^{b^{(1)}_j +1}X_2^{b^{(2)}_j}\big)$  and $\psi\big(X_1^j\big)$ in the lexicographic order if and only if $a^{(1)}_j= \max \{ a^{(1)}_k | k=1,\dots,j\}$ and $a^{(2)}_j= \min \{ a^{(2)}_k | k=1,\dots,j\}$. In other words we have the lexicographically consecutive pair $\Big( \psi \big( X_0^{b^{(1)}_j +1}X_2^{b^{(2)}_j}\big) , \psi \big(X_1^j\big) \Big)$ which yields the essential binomial corresponding to $T_1^{a^{(2)}_j}\psi(X_0^{b^{(1)}_j+1}X_2^{b^{(2)}_j})-T_0^{d_1-a^{(1)}_j} \psi(X_1^j) \in \Syz(I^j)$  for every $j < \min\{i,q\}$ if and only if $a^{(1)}_j= \max \{ a^{(1)}_k | k=1,\dots,j\}$ and $a^{(2)}_j= \min \{ a^{(2)}_k | k=1,\dots,j\}$. 
\end{proof}

The previous conditions are used in Algorithm \ref{general:algorithm} to compute the minimal generating set of the Rees ideal of any monomial ideal in two variables with three minimal generators.

\begin{algorithm}
\caption{Minimal generating set of Rees ideal $\Rk(I)$ where $I$ is a monomial ideal in the polynomial ring in two variables generated by any three minimal generators}\label{general:algorithm}
\begin{algorithmic}[1]
\Require Three minimal generators $g_1,g_2$ and $g_3$.
\Ensure Minimal generating set of $\Rk(I)$
\vspace{0.2cm}
\State{get\_param($g_1,g_2,g_3$)} \Comment{subalgorithm retrieving $u_1,u_2,d_1,$ and $d_2$ from $\{g_1,g_2,g_3\}$}
\State $a^{(1)}\gets u_1, a^{(2)}\gets u_2, b^{(1)}\gets 0, b^{(2)}\gets 0, b^{(1)}_2\gets 0, b^{(2)}_2\gets 0,j\gets 1$;
\State $gens\gets \{\}$;
\State $min^{(1)}\gets a^{(1)}, min^{(2)}\gets a^{(2)}$
\State $max^{(1)}\gets a^{(1)}, max^{(2)}\gets a^{(2)}$
\State $gens\gets gens \cup \{ T_1^{u_2} X_0-T_0^{d_1-u_1} X_1\} $
\State $gens\gets gens \cup \{T_1^{d_2-u_2} X_1 - T_0^{u_1} X_2\} $
\While{$a^{(1)} \neq 0$ and $a^{(2)} \neq 0$}
\State{$j\gets j+1$}
\State $a^{(1)} \gets (u_1*j) \mod d_1$
\State $a^{(2)}\gets (u_2*j) \mod d_2$
\State $b^{(1)}\gets u_1*j/d_1$;
\State $b^{(2)}\gets u_2*j/d_2$;
\If {$b^{(1)}-b^{(1)}_2==b^{(1)}-b^{(1)}_2$}
    \If {$b^{(1)}-b^{(1)}_2==0$}
    \State $gens\gets gens \cup  \{T_0^{d_1-a^{(1)}}T_1^{d_2-a^{(2)}} X_1^j-X_0^{b^{(1)}+1}X_2^{b^{(2)}+1}\} $
    \Break
    \EndIf
    \If {$b^{(1)}-b^{(1)}_2==1$}
    \State $gens \gets gens \cup \{T_0^{a^{(1)}}T_1^{a^{(2)}} X_0^{b^{(1)}}X_2^{b^{(2)}}-X_1^j\}$
    \Break
    \EndIf
\Else
    \If{$(a^{(1)} > max^{(1)} \mbox{ \bf{and} }a^{(2)}<min^{(2)})$}
    \State $gens \gets gens \cup  \{T_1^{a^{(2)}}X_0^{b^{(1)}+1}X_2^{b^{(2)}}-T_0^{d_1-a^{(1)}} X_1^j\}$
    \State $\mbox{\bf update } max^{(1)},min^{(2)}$
    \EndIf
    \If{$(a^{(1)} < min^{(1)} \mbox{ \bf{and} }a^{(2)}>max^{(2)})$}
    \State $gens \gets gens \cup \{T_1^{d_2-a^{(2)}}X_1^j - T_0^{a^{(1)}} X_0^{b^{(1)}}X_2^{b^{(2)}+1}\}$
    \State $\mbox{\bf update }min^{(1)}, max^{(2)}$
    \EndIf
\State $b^{(1)}_2\gets b^{(1)}$
\State $b^{(2)}_2\gets b^{(2)}$
\EndIf
\EndWhile\\
\Return $gens$
\end{algorithmic}
\end{algorithm}

\begin{theorem}\label{thm:general}
Let $I=\langle T_0^{d_1},T_0^{u_1}T_1^{u_2},T_1^{d_2}\rangle\subseteq R=\kb[T_0,T_1]$ a monomial ideal in two variables generated by three monomials, and let $q=\min\{ d_1/ \gcd(d_1,u_1), d_2/\gcd(d_2,u_2)\}$. Algorithm \ref{general:algorithm} terminates in at most $q$ steps and returns the minimal generating set of the Rees ideal of $I$.
\end{theorem}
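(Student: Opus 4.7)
The plan is to verify termination and correctness separately. Termination is the easier half: the main loop advances $j$ by one at each iteration and continues only while both residues $a^{(1)}=u_1 j\bmod d_1$ and $a^{(2)}=u_2 j\bmod d_2$ are nonzero. By the very definition of $q$, at $j=q$ at least one of these residues vanishes; hence the loop necessarily exits no later than its $q$-th pass, and possibly earlier through one of the internal \textbf{break} statements. This immediately yields the complexity bound.

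For correctness, I would show by induction on $j$ that after the $j$-th iteration the variable $gens$ contains precisely the essential binomials of $\Rk(I)$ arising from $G^{(t)}(I)$ for $t\leq j$. The base case $j=1$ is lines~6--7, which record the two binomials (\ref{syz1}) and (\ref{syz2}); the discussion following Lemma~\ref{lemma:consecutive} shows these are indeed the only essential binomials at $t=1$. For the inductive step, Lemma~\ref{lemma:consecutive} and the paragraph after it reduce the search at level $t=j$ to the two lexicographically consecutive pairs of $G^{(j)}(I)$ that contain $\psi(X_1^j)$, so the only two possible shapes of a candidate essential binomial are exactly the two types the algorithm might insert.

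The heart of the proof is to match the concrete tests in the pseudocode with the abstract conditions of Propositions~\ref{prop:generalLimit} and \ref{prop:general3}. The threshold index $i$ of Proposition~\ref{prop:generalLimit} is characterised by $b^{(1)}_i-b^{(1)}_{i-1}=b^{(2)}_i-b^{(2)}_{i-1}$; this is exactly the test in line~14, and at that step the algorithm records either (\ref{rel_syz1}) or (\ref{rel_syz2}) depending on whether the common difference is $0$ or $1$, then breaks, matching the two cases in the proof of that proposition. For $1<j<\min\{q,i\}$, Proposition~\ref{prop:general3} asserts that an essential binomial exists at step $j$ if and only if $a^{(1)}_j$ and $a^{(2)}_j$ are simultaneously running extrema of opposite type on their respective sequences, and by Lemma~\ref{lem:ComplementarySeq} this reduces to one of the two paired comparisons ``$a^{(1)}>\max^{(1)}$ and $a^{(2)}<\min^{(2)}$'' or its symmetric counterpart, which is precisely what the algorithm checks.

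Finally, if the while-loop exits through its guard rather than via an internal \textbf{break}, then $j$ reached $q$ without line~14 ever succeeding, and Proposition~\ref{prop:general1} guarantees that no essential binomial has been missed at or beyond level $q$. The main obstacle I foresee is the bookkeeping of the running extrema: one must verify that the stored $\max^{(1)},\min^{(1)},\max^{(2)},\min^{(2)}$ genuinely coincide with the extrema of $a^{(k)}_1,\dots,a^{(k)}_j$ at each iteration, and that the \textbf{update} directives in lines~24 and~28 (which the pseudocode leaves implicit) preserve this invariant throughout the loop. Once this loop invariant is pinned down rigorously, the inductive step and hence the theorem follow by direct application of the earlier propositions.
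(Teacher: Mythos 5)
Your proof follows essentially the same route as the paper's: both assemble Lemma~\ref{lemma:consecutive} with Propositions~\ref{prop:general1}, \ref{prop:generalLimit} and \ref{prop:general3} around the threshold $\min\{i,q\}$, and your matching of the pseudocode's extrema tests to the conditions of Proposition~\ref{prop:general3} via Lemma~\ref{lem:ComplementarySeq} is exactly the argument the paper makes. The one point to tighten is the guard-exit case: Proposition~\ref{prop:general1} excludes essential binomials only for $t>q$, so you still owe a short argument that the divisibility-type binomial at level $t=q$ itself (which is essential when $q\le i$, and which the algorithm picks up in its final pass because $a^{(1)}_q=0$ or $a^{(2)}_q=0$ is a new running extremum) is indeed a minimal generator --- the paper covers this by observing that the binomial exhibited at $t=q$ in the proof of Proposition~\ref{prop:general1}, or at $t=i$ in Proposition~\ref{prop:generalLimit}, whichever occurs at the smaller degree, is the last element of the minimal generating set.
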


\begin{proof}
As we saw above, the minimal generating set of $\Rk(I)$, is made of binomials that correspond to relations between pairs of products of powers of the  same degree of $g_0 = T_0^{d_1},g_1=T_0^{u_1}T_1^{u_2}$, and $g_2=T_1^{d_2}$ that can not be expressed as any combination of other such relations. Let us consider the lexicographically ordered set of all these products of powers of degree $t$, then Lemma \ref{lemma:consecutive} tells us that all the relations between any pair that is not made of consecutive elements in the set can never correspond to a minimal generator for all $t\in \mathbb{N}^+$. Excluding all those and given the fact that one of the two elements in the pair must be a pure power $g_1^t$ as claimed at the beginning of the section; that yields the conclusion that the only minimal generators are the binomials that correspond to the relation of this pure power and a product of powers of $g_0$ and $g_2$ that is located immediately in front or prior to $g_1^t$ in the ordered set.
 
 Let $i$ be the smallest integer which satisfies  $b^{(1)}_i-b^{(1)}_{i-1}=b^{(2)}_i-b^{(2)}_{i-1}$. For $t<\min\{q,i\}$ Proposition \ref{prop:general3} finds all such binomials. Then, by Propositions \ref{prop:general1} and \ref{prop:generalLimit} at $t=q$ and at $t=i$, we expect to find some relation of divisibility between $g_1^t$ and $g_0^jg_2^{t-j}$, for some integer $j<t$. Whichever of these comes from the smaller degree is the one corresponding to a binomial that is a minimal generator. In fact, this is the last element in the minimal generating set since due to these last two Propositions we know that all other relations after this one can be expressed as combinations of the ones we have already found. This shows the correctness and termination of Algorithm \ref{general:algorithm}.
\end{proof}
Note that since both terms in every binomial generator of the Rees ideal of $I$ have $+/-1$ constant coefficients we will refer to the minimal generating set as the unique normalized set of binomials up to a possible sign change of the elements.

\begin{example}\label{Ex1}
    Let $I=\langle T_0^{15},T_0^9T_1^6,T_1^{13}\rangle$. Both $q=5$ and $i=5$, so Algorithm \ref{general:algorithm} finishes in 5 steps and returns the minimal generating set of $\Rk(I)$. First, the first two elements are added to the set of minimal generators of $\Rk(I)$ as seen in (\ref{syz1}) and (\ref{syz2}). They correspond to  each of the two minimal first syzygies of $I^1$, namely:
    $$g_1= T_1^6X_0-T_0^6X_1 \text{ and } g_2= T_1^7X_1-T_0^9X_2$$
     The algorithm proceeds as follows: at $j=2$ we have a min from $a^{(1)}_2$ and (as expected by Lemma \ref{lem:ComplementarySeq}) a max from $a^{(2)}_2$; hence, the essential binomial $g_3=T_1X_1^2-T_0^3X_0X_2$ is added. At $j=3$ we get a max from $a^{(1)}_3$ and a min from $a^{(2)}_3$, so  the essential binomial $g_4=T_1^5X_0^2X_2-T_0^3X_1^3$ is added. At $j=4$ nothing relevant is found as expected from Corollary \ref{cor:nextrelevant}. Finally, at $j=5=\min\{i,q\}$ the Algorithm detects the divisibility relation, the  corresponding binomial $g_5=T_1^4X_0^3X_2^2-X_1^5$ is added, and the Algorithm terminates.
     
     Table \ref{table:example1} shows the trace of the variables across the steps of the algorithm for this example. Columns $\max$ and $\min$ only show numbers when they are actually updated and therefore new elements are added to the set of minimal generators of $\Rk(I)$.

    \begin{table}
    \begin{tabular}{c|cccccccc|l}
        $j$&$a^{(1)}$&$b^{(1)}$&$a^{(2)}$&$b^{(2)}$&$max^{(1)}$&$max^{(2)}$&$min^{(1)}$&$min^{(2)}$&$gens$\\
        \hline
        $1$&$9$&$0$&$6$&$0$&$9$&$6$&$9$&$6$&$T_1^6X_0-T_0^6X_1, T_1^7X_1-T_0^9X_2$ \\
        $2$&$3$&$1$&$12$&$0$&&$12$&$3$&&$T_1X_1^2-T_0^3X_0X_2$ \\
        $3$&$12$&$1$&$5$&$1$&$12$&&&$5$&$T_1^5X_0^2X_2-T_0^3X_1^3$ \\
        $4$&$6$&$2$&$11$&$1$&&&&& \\
        $5$&$0$&$3$&$4$&$2$&&&$0$&$4$&$T_1^4X_0^3X_2^2-X_1^5$ \\
        \hline
    \end{tabular}
    \caption{Trace of Algorithm \ref{general:algorithm} for $I=\langle T_0^{15},T_0^9T_1^{6},T_1^{13}\rangle$.}\label{table:example1}
\end{table}
\end{example}

\begin{Note}
Corollary \ref{cor:nextrelevant} tells us that if one adds the indices of the last maximum and minimum at any given point in one of the sequences then one gets the index of the immediately next relevant element (a maximum or a minimum) in the sequence. By repeating this process with the upgraded indices of the maximum and minimum then one can compute all the indices where minimal generators of the Rees Ideal are added, without actually having to go through all indices. This can save some computations in Algorithm \ref{general:algorithm} but we have opted not to include this optimisation in order to ease the description of the algorithm. 
To make use of such optimisation, one would need to add two more variables initialised to $1$, for example: $c1 \leftarrow 1$ and $c2 \leftarrow 1$. Then, line $9$ in Algorithm \ref{general:algorithm} would need to be changed to $j \leftarrow c1+c2$, and last: the new indices need to be updated so lines $26$ and $30$ should update also the variables $c1$ and $c2$ respectively, i.e. by setting $c1 \leftarrow j$ and $c2 \leftarrow j$ respectively.
\end{Note}

\section{Rees algebras of monomial plane curve parametrizations}\label{sec:monomialCurveParametrization}

A particularly interesting case of application of Theorem \ref{thm:general} is to the ideal of a parametrization of a plane monomial curve, see \cite{CortadellasDAndrea}. In this case, the ideal under consideration is of the form $I=\langle T_0^d, T_0^{u}T_1^{d-u},T_1^d\rangle\subset R=\Kbb[T_0,T_1]$, i.e.\ $d_1=d_2=d$ and $u_2=d-u_1$. These ideals correspond to the equigenerated case of the ideals studied in the previous section. Because of that we can describe their minimal generating set using just two parameters. As a result, we are able to adapt the propositions in the preceding sections to a simpler form and hence to give an adapted version of Algorithm \ref{particular:algorithm} for this particular case.

\begin{proposition}[Adaptation of Proposition \ref{prop:general1} and Proposition \ref{prop:generalLimit}]\label{prop:particular1}
Let $t=\frac{d}{\gcd(d,u)}$. There are no essential binomials involving elements of $G^{(j)}(I)$ for $j>t$.  
\end{proposition}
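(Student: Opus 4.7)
The plan is to reduce the claim to the general-setting bounds already established in Propositions \ref{prop:general1} and \ref{prop:generalLimit}. The crucial observation is that in the equigenerated case $d_1=d_2=d$ and $u_2=d-u_1=d-u$, the two ratios defining the general bound $q$ collapse to a single value. Since $\gcd(d,d-u)=\gcd(d,u)$, we obtain
$$q=\min\Bigl\{\tfrac{d_1}{\gcd(d_1,u_1)},\tfrac{d_2}{\gcd(d_2,u_2)}\Bigr\}=\tfrac{d}{\gcd(d,u)}=t.$$
Proposition \ref{prop:general1}, read with the power index (there called $t$) renamed to $j$ to avoid clashing with the fixed parameter $t$ introduced here, then immediately yields the claim: there is no essential binomial involving an element of $G^{(j)}(I)$ for $j>q=t$.

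For consistency with the dual formulation via Proposition \ref{prop:generalLimit}, I would also verify that the threshold $i$ appearing there coincides with $t$ in this symmetric setting. Using $u_1+u_2=d$, one sees that $a^{(1)}_j+a^{(2)}_j\in\{0,d\}$, and the value $0$ is attained exactly at the multiples of $t$. Since the increment $b^{(k)}_j-b^{(k)}_{j-1}$ equals $1$ precisely when $a^{(k)}_{j-1}\geq d-u_k$, a short case analysis shows that the two increments agree iff either $j\equiv 1\pmod t$ (both $0$, from the all-zero residue at $j-1$) or $j\equiv 0\pmod t$ (both $1$, from $a^{(1)}_{j-1}=d-u$). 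Hence the smallest $j>1$ with this property is $j=t$, so $i=t$ and both general bounds collapse to the same threshold.

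No genuine obstacle is expected: the combinatorial content has already been absorbed into the proofs of Propositions \ref{prop:general1} and \ref{prop:generalLimit}. The only care required is the routine translation of parameters between the general and equigenerated settings, together with keeping the index/parameter notation clean.
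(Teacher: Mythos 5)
Your proposal is correct and follows essentially the same route as the paper: both reduce to Propositions \ref{prop:general1} and \ref{prop:generalLimit} by checking that $q=t$ (via $\gcd(d,u)=\gcd(d,d-u)$) and that the threshold $i$ also equals $t$. Your verification of $i=t$ via the residues $a^{(k)}_j$ summing to $0$ or $d$ is just an equivalent reformulation of the paper's computation $b^{(1)}_j+b^{(2)}_j=j-1$ for $j<t$.
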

\begin{proof}
    The proof of Proposition \ref{prop:general1} can be easily adapted to this case since $\gcd(d,u)=\gcd(d,d-u)$ which yields $t=q$ and as a result we obtain that the binomial 
\[
X_1^{\frac{d}{\gcd(d,u)}}-X_0^{\frac{u}{\gcd(d,u)}}X_2^{\frac{d-u}{\gcd(d,u)}}
\]
is an essential binomial for $\Rk(I)$ and it comes from the one relation of divisibility between elements in $G^{(t)}(I)$.

To see that $i=t$ is the smallest integer such that $b^{(1)}_i-b^{(1)}_{i-1}=b^{(2)}_i-b^{(2)}_{i-1}$, is slightly less straightforward. Remember that we saw in the previous section that $b^{(1)}_j-b^{(1)}_{j-1}\neq b^{(2)}_j-b^{(2)}_{j-1}$ is equivalent to  saying that $b^{(1)}_j+b^{(2)}_j= j-1$ for every $j<t$. 

Note that $u_1\cdot i = u\cdot i = d\cdot k + a^{(1)}_j$  for some integer $k$, where $0 < a^{(1)}_j <d$ for every $1\leq j < t$ by definition. Then,

\begin{align*}
    u_2 \cdot j &= (d-u)\cdot j \\
    \hspace{-1.5cm} \Leftrightarrow \hspace{1cm} & = d\cdot j - (d\cdot k +a^{(1)}_j)  \\
    \hspace{-1.5cm} \Leftrightarrow \hspace{1cm} & = d(j-k-1)+d-a^{(1)}_j
\end{align*}
Since $0 < d-a^{(1)}_j < d$, for every $1\leq j < t$; we then have that $b^{(1)}_j+b^{(2)}_j=k+(j-k-1)=j-1$, as desired. Note that also at $j=t$ we have $a^{(1)}_t=0$ and thus
$b^{(1)}_t+b^{(2)}_t=k+(t-k)=t$. But, $b^{(1)}_{t-1}+b^{(2)}_{t-1}=k+((t-1)-k-1)=t-2$, which implies as desired that $i=t$ is the smallest integer such that $b^{(1)}_i-b^{(1)}_{i-1}=b^{(2)}_i-b^{(2)}_{i-1}=1$.
\end{proof}

Therefore, for the particular case of the ideal of the parametrization, it
does not make much sense to continue using two sequences since each one
contains the same information as the other one and we just saw that the stopping step of the algorithms is always $t=\frac{d}{\gcd(d,u)}$. 

For integers below $t$, the essential binomials are described by the following adaptation of Proposition \ref{prop:general3}.

\begin{proposition}[Adaptation of Proposition \ref{prop:general3}]\label{prop:particular3}
Let $a_i= u\cdot i \pmod{d}$ and $b_i=\lfloor \frac{u\cdot i}{d}\rfloor $ for $i=1,2,\dots,\frac{d}{\gcd(d,u)}$. Then, there exists an essential binomial formed by elements of $G^{(j)}(I)$ of the form:
$$T_1^{d-a_j}\cdot \psi(X_0^{b_j+1}X_2^{j-(b_j+1)})-T_0^{d-a_j}\cdot \psi(X_1^j)$$
if and only if $a_j= \max \{ a_i | i=1,\dots,j\}$ for every $j\leq \frac{d}{\gcd(d,u)}$.

Similarly, there exists an essential binomial formed by elements of $G^{(j)}(I)$ of the form:
$$T_1^{a_j}\cdot \psi(X_1^j) - T_0^{a_j}\psi(X_0^{b_j}X_2^{j-b_j})$$
if and only if $a_j= \min \{ a_i | i=1,\dots,j\}$ for every $j\leq \frac{d}{\gcd(d,u)}$.    
\end{proposition}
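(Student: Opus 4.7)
The plan is to obtain Proposition \ref{prop:particular3} as a direct specialization of Proposition \ref{prop:general3} under the substitution $d_1=d_2=d$, $u_1=u$, $u_2=d-u$, together with a translation of the two-sequence notation of Section \ref{sec:mainAlgorithm} into the single sequence used here.

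First I would build the dictionary between the two settings. With $u_1=u$ and $d_1=d$, one has $a^{(1)}_i=a_i$ and $b^{(1)}_i=b_i$, so the first sequence is literally the sequence $(a_i),(b_i)$ of this section. For the second sequence, expanding $(d-u)i=di-ui$ and using $ui=d\,b_i+a_i$ yields, whenever $a_i\neq 0$, the identities $a^{(2)}_i=d-a_i$ and $b^{(2)}_i=i-1-b_i$; the latter is the relation $b^{(1)}_i+b^{(2)}_i=i-1$ already verified in the proof of Proposition \ref{prop:particular1}.

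Second, I would observe that under these identifications the two extremality conditions of Proposition \ref{prop:general3} collapse into a single one: since $a^{(2)}_k=d-a^{(1)}_k$ for all relevant $k$, the equality $a^{(1)}_j=\max\{a^{(1)}_k\mid k\leq j\}$ is equivalent to $a^{(2)}_j=\min\{a^{(2)}_k\mid k\leq j\}$, and dually. Substituting into the two binomial forms of Proposition \ref{prop:general3} then gives, after simplification,
\[
T_1^{d-a_j}\psi(X_0^{b_j+1}X_2^{j-(b_j+1)}) - T_0^{d-a_j}\psi(X_1^j)
\quad\text{and}\quad
T_1^{a_j}\psi(X_1^j) - T_0^{a_j}\psi(X_0^{b_j}X_2^{j-b_j}),
\]
which are exactly the binomials claimed here.

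Finally, I would handle the upper bound on $j$. Proposition \ref{prop:general3} is stated for $j<\min\{i,q\}$ and, by Proposition \ref{prop:particular1}, $i=q=d/\gcd(d,u)$ in this equigenerated case, so the general result already yields the claim for every $j<d/\gcd(d,u)$. For the boundary value $j=d/\gcd(d,u)$, where $a_j=0$ is automatically a minimum of the sequence, the divisibility binomial furnished by Proposition \ref{prop:particular1} coincides with the minimum-case binomial above (with $a_j=0$ and $b_j=u/\gcd(d,u)$), which extends the statement to the closed range $j\leq d/\gcd(d,u)$. The only delicate point I expect is the consistency of the exponent bookkeeping at this boundary, where $b^{(1)}_j+b^{(2)}_j$ jumps from $j-1$ to $j$; fortunately the min-case formula only involves $b_j$ and $j-b_j$, so the transition is seamless.
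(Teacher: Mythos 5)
Your proposal is correct and matches the paper's intent exactly: the paper states this proposition as an ``Adaptation of Proposition \ref{prop:general3}'' with no written proof, and your argument carries out precisely that specialization — the dictionary $a^{(2)}_i=d-a_i$, $b^{(1)}_i+b^{(2)}_i=i-1$, the collapse of the two extremality conditions into one, and the boundary case $j=d/\gcd(d,u)$ via Proposition \ref{prop:particular1}. No gaps; if anything, your write-up supplies more detail than the paper does.
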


As a result of the last two propositions we are able to adapt Algorithm \ref{general:algorithm} for the case of parametrizations of monomial plane curves obtaining a quite simpler version in the form of Algorithm \ref{particular:algorithm}.

\begin{algorithm}
 \caption{Minimal generating set of Rees ideal $\Rk(I)$ where $I$ is the associated ideal to some monomial plane curve parametrization}\label{particular:algorithm}
 \begin{algorithmic}[1]
 \Require Parameters $d$ and $u$ of the monomial ideal defining the monomial plane curve.
 \Ensure Minimal generating set of $\Rk(I)$
 \State $gens\gets\{\}$
 \State $j\gets 1,\, a \gets u, \, b \gets 0$
 \State $min \gets a$
 \State $max \gets a$
 \State $gens \gets gens\cup \left\{ T_1^{d-u} X_0-T_0^{d-u} X_1\right\} $
 \State $gens \gets gens \cup \left\{ T_1^{u} X_1 - T_0^{u} X_2\right\} $

 \While{$a \neq 0$}
  \State $j++$
 \State $a\gets (u*j) \mod d$
 \State $b\gets (u*j) / d$
 \If{$a > max$}
 \State $gens \gets gens \cup \left\{ T_1^{d-a} X_0^{b+1}X_2^{j-(b+1)}-T_0^{d-a} X_1^j\right\}$
 \State $max\gets a$
 \EndIf
 \If {$a < min$}
 \State $gens\gets gens \cup \left\{ T_1^{a} X_1^j - T_0^{a} X_0^{b}X_2^{j-b}\right\} $
 \State $min\gets a$
 \EndIf
 \EndWhile\\
 \Return $gens$
 \end{algorithmic}
 \end{algorithm}

\begin{example}\label{Ex2}
 Let $I=\langle T_0^{21} ,T_0^{6} T_1^{15}, T_1^{21} \rangle$, so $d=21$ and $u=6$. Algorithm \ref{particular:algorithm} finishes in $\frac{d}{\gcd(d,u)}=7$ steps. First, we add the first two elements to the set of minimal generators of $\Rk(I)$, one for each of the two minimal first syzygies of $I$, corresponding to the minimal generators of $\Syz(I)$, namely:
 $$g_1=T_1^{15} X_0 -T_0^{15} X_1\text{ and } g_2=T_1^{6} X_1-T_0^{6} X_2.$$
 The algorithm proceeds until step $7$ in which $g_6=X_1^7-X_0^2X_2^5$ is added.
 Table \ref{table:example2} shows the trace of the variables across the steps of the algorithm for this example, and the rest of the generators, $g_3, g_4,g_5$. Columns $\max$ and $\min$ only show numbers when they are actually updated, and therefore new elements are added to the set of minimal generators of $\Rk(I)$.

 \renewcommand{\arraystretch}{1.2} 
 \begin{table}
     \begin{tabular}{ccccc|l}
         j&a&b&max&min&gens\\
         \hline
         $1$&$6$&$0$&$6$&$6$&$T_1^{15} X_0-T_0^{15} X_1  ,\, T_1^{6} X_1 - T_0^{6} X_2$ \\
         $2$&$12$&$0$&$12$& &$T_1^{9} X_0X_2-T_0^{9} X_1^2 $\\
         $3$&$18$&$0$&$18$& &$T_1^{3} X_0X_2^2 - T_0^{3} X_1^3 $ \\
         $4$&$3$&$1$& &$3$&$T_1^{3} X_1^4 - T_0^{3} X_0X_2^3 $ \\
         $5$&$9$&$1$&$ $& & \\
         $6$&$15$&$1$&$ $& & \\
         $7$&$0$&$2$&$ $&$0$&$X_1^7 - X_0^2X_2^5$ \\
         \hline
     \end{tabular}
     \caption{Trace of Algorithm \ref{particular:algorithm} for $I=\langle T_0^{21},T_0^6T_1^{15},T_1^{21}\rangle$.}\label{table:example2}
 \end{table}
 \end{example}

\section{Minimal Free Resolution of the Rees ideal}\label{sec:resolution}
In this section we give an explicit description of the minimal free resolution of the defining ideal of the Rees algebra associated to tri-generated monomial ideals in two variables. We thus generalise the results given in \cite{CortadellasDAndrea} for the case of Rees algebras associated to plane curve parametrizations using a different approach.
In Section \ref{sec:graph} we describe what we call the {\em Rees graph} of the ideal $I$, which is constructed using the data obtained in Algorithm \ref{general:algorithm}. We claim that this graph encodes the minimal free resolution of $\Rk(I)$ and that all the information about the minimal resolution can be read off from it (Section \ref{sec:minimal_resolution}). To prove this, we need as preparatory step a Gr\"obner basis of $\Rk(I)$ (Section \ref{sec:grobner}) and from it we obtain a non-minimal free resolution of $\Rk(I)$ (Section \ref{sec:nm_resolution}). Minimalizing this resolution using two basic algebraic reductions yields finally the minimal free resolution (Section \ref{sec:minimal_resolution}).

\subsection{The Rees graph of $I$}\label{sec:graph}
Consider four given integers $d_1,d_2,u_1,u_2$, and the monomial ideal $I=I_{(d_1,d_2,u_1,u_2)}=\langle T_0^{d_1},T_0^{u_1}T_1^{u_2},T_1^{d_2} \rangle \subset \Kbb \left[ T_0,T_1\right]$. Let $G\left(\Rk(I)\right)$
$=\{g_1,\ldots,g_r\}$ be the minimal generating set for the Rees ideal $\Rk(I) \subset \Kbb \left[ T_0,T_1,X_0,X_1,X_2\right]$  sorted as obtained via Algorithm~\ref{general:algorithm} (up to a possible multiplication by $(-1)$ of some elements). Using the data in Algorithm \ref{general:algorithm} we shall construct a graph that encodes the minimal free resolution of $\Rk(I)$. The nodes of the graph correspond to each of the generators in $G\left(\Rk(I)\right)$. To ease the description of the graph, we classify the elements of $G\left(\Rk(I)\right)$ into two types: we say that $g_i\in G\left(\Rk(I)\right)$ is an {\em upper generator} if it was included in step 6, 16 or 25 of Algorithm \ref{general:algorithm}, i.e. if it comes from an element of $\Delta_{max}(d_1,u_1)$, and we say that $g_i$ is a {\em lower generator} if it comes from an element of $\Delta_{min}(d_1,u_1)$, i.e. it was included in step 7, 20 or 29 of Algorithm \ref{general:algorithm}.

We describe  step by step the construction of the graph corresponding to $I$. For this, we will accommodate the nodes of the graph in two rows, a top row formed by nodes corresponding to the {\em upper generators} and a bottom row corresponding to the {\em lower generators}. This display is of course arbitrary, but will help making the description of the graph more convenient. The graph is built in two main steps:
\begin{enumerate}
\item Place the node corresponding to $g_1$ in the top row, and the one corresponding to $g_2$ in the bottom row. 
\item For each of the generators $g_i$, $i\in\{3,\dots,r\}$ proceed in order, doing the following: If $g_i$ is an {\em upper generator} place it as the rightmost element in the top row and draw an edge from the rightmost node in both the top and the bottom row of the graph. Otherwise, if $g_i$ is a {\em lower generator} place it as the rightmost element in the bottom row and draw an edge from the rightmost node in both the top and the bottom row of the graph.
\end{enumerate}

We call this graph the {\em Rees graph of $I$} and denote it by $\mathrm{Graph}(\Rk(I))$ 
or also by $\mathrm{Graph}{(d_1,d_2,u_1,u_2)}$. It has $r$ nodes, $2(r-2)$ edges and $r-3$ triangles. Figure \ref{fig:ex_graph} shows the three steps in the construction of $\mathrm{Graph}(15,13,9,6)$, corresponding to the ideal in Example \ref{Ex1}.

\begin{example}\label{ex:graph}
Let $(d_1,d_2,u_1,u_2)=(15,13,9,6)$ as in Example \ref{Ex1}.

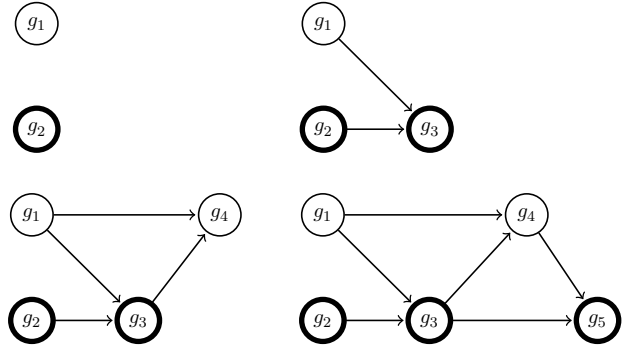
\begin{figure}[ht!]
\hspace{0.05cm}
\minipage{0.30\textwidth}
\hspace{0.5cm}
\begin{tikzpicture}[->,
shorten >=1pt,node distance=2cm,semithick,scale=0.7, transform shape]
  \node[draw, circle,line width= 2pt]  (g2) at (0, 0){$g_2$};
  \node[draw, circle]  (g1) [above of=g2] {$g_1$};
\end{tikzpicture}
\endminipage
\minipage{0.30\textwidth}
\begin{tikzpicture}[->,
shorten >=1pt,node distance=2cm,semithick,scale=0.7, transform shape]

  \node[draw, circle,line width= 2pt]  (g2) at (0, 0){$g_2$};
  \node[draw, circle]  (g1) [above of=g2] {$g_1$};
  \node[draw, circle,line width= 2pt]  (g3) [right of=g2] {$g_3$};

  \path (g1) edge [draw]     (g3)
        (g2) edge          (g3);
\end{tikzpicture}
\endminipage
\vspace{0.5cm}
\minipage{0.30\textwidth}
\begin{center}
\begin{tikzpicture}[->,
shorten >=1pt,node distance=2cm,semithick,scale=0.7, transform shape]

  \node[draw, circle,line width= 2pt]  (g2) at (0, 0){$g_2$};
  \node[draw, circle]  (g1) [above of=g2] {$g_1$};
  \node[draw, circle,line width= 2pt]  (g3) [right of=g2] {$g_3$};
  \node[draw, circle]  (g4)  at (3.53, 2)  {$g_4$};

  \path (g1) edge [draw]     (g3)
        edge           (g4)
        (g2) edge          (g3);
\path (g3) edge [draw]           (g4);
\end{tikzpicture}
\end{center}
\endminipage
\minipage{0.30\textwidth}
\begin{center}
\begin{tikzpicture}[->,
shorten >=1pt,node distance=2cm,semithick,scale=0.7, transform shape]

  \node[draw, circle,line width= 2pt]  (g2) at (0, 0){$g_2$};
  \node[draw, circle]  (g1) [above of=g2] {$g_1$};
  \node[draw, circle,line width= 2pt]  (g3) [right of=g2] {$g_3$};
  \node[draw, circle]  (g4)  at (3.83, 2)  {$g_4$};
  \node[draw, circle,line width= 2pt]  (g5)  at (5.16, 0) {$g_5$};

  \path (g1) edge [draw]           (g3)
        edge           (g4)
        (g2) edge          (g3)
        (g3) edge            (g5)
        edge            (g4)
        (g4) edge       (g5);
  
\end{tikzpicture}
\end{center}
\endminipage
\caption{Step by step of the construction of $\mathrm{Graph}(15,13,9,6)$. The thick nodes correspond to {\em lower generators}.}\label{fig:ex_graph}
\end{figure}
\end{example}

\begin{Notation}\label{defn:TriangleNotation}
We will use the notation $\mathrm{DE}\left( \Gamma \right)$ for the set of directed edges in  a graph $\Gamma=\mathrm{Graph}(d_1,d_2,u_1,u_2)$. As an abuse of notation sometimes we write $(j\longrightarrow k)$ instead of $(g_j \longrightarrow g_k)$ for the edges in $\mathrm{DE}\left( \Gamma \right)$.
In every triangle of $\Gamma$, there is a unique source $j$ and a unique sink $\ell$. Denote the third vertex as $k$; write $(j,k,\ell)$ for the whole triangle. This means in particular that the arrows of $\Gamma$ supported on $g_j,g_k,g_\ell$ are exactly $(j\longrightarrow k)$, $(j\longrightarrow \ell)$, and $(k\longrightarrow \ell)$. Write $\mathrm{Tri}(\Gamma)$ for the set of triangles in $\Gamma$.
\end{Notation}

The main claim of this section (see Theorem \ref{th:minimal_resolution}) is that the minimal free resolution of $\Rk(I)$ is of the form
\[
0 \longrightarrow S^{r-3} \xrightarrow{\hspace{0.2cm} \phi_2 \hspace{0.2cm} } S^{2(r-2)}\xrightarrow{\hspace{0.2cm} \phi_1 \hspace{0.2cm} } S^{r} \xrightarrow{\hspace{0.2cm} \phi_0 \hspace{0.2cm} } \Rk(I) \xrightarrow{\hspace{0.4cm}}  0
\]
and that $\mathrm{Graph}(\Rk(I))$ encodes this resolution in the sense that there is one generator of the first module for each node of the graph, one generator of the second module for each edge in $\mathrm{DE}\left( \mathrm{Graph}(\Rk(I)) \right)$ and one generator of the third module for each triangle in $\mathrm{Tri}\left( \mathrm{Graph}(I) \right)$.  In particular, we have that $\mathrm{pd}(\Rk(I))=2$ if $r>3$ (if $r=3$ then $\mathrm{pd} (\Rk(I))=1$). Also, $\beta_0(\Rk(I))=r$, $\beta_1(\Rk(I))= | \mathrm{DE}\left( \mathrm{Graph}(\Rk(I)) \right)  | = 2(r-2)$ and $\beta_2(\Rk(I))= | \mathrm{Tri}\left( \mathrm{Graph}(I) \right)  | = r-3$ (if $r>3$). Moreover, the differentials in the resolution can be read off from the data in the graph and in Algorithm \ref{general:algorithm}. These claims are proven in the rest of the section, see Theorems \ref{thm:FirstSyzygies} and \ref{thm:SecondSyzygies}.

\subsection{A Gr\"obner basis for $\Rk(I)$}\label{sec:grobner}


Our first step is to build a Gr\"{o}bner basis for $\Rk(I)$, which will be obtained by adding just one element to its minimal generating set. For this, we use a block term order, different from the one used in \cite{CortadellasDAndrea}. The reason for this choice is that this term order will allow us to obtain a Gr\"{o}bner bases of the first and second syzygy modules of $\Rk(I)$ and construct a free resolution. Furthermore, the choice of this term order and the Gr\"obner basis associated to it is convenient for the computation of some homological invariants of $\Rk(I)$ as well as an involutive basis for it, as can be seen in \cite{IOSS24b}.

\begin{definition}
     Let $\sigma=T_0^{a_0}T_1^{a_1}X_0^{b_0}X_1^{b_1}X_2^{b_2}$ and $\tau=T_0^{c_0}T_1^{c_1}X_0^{d_0}X_1^{d_1}X_2^{d_2}$. Then,
    $$\sigma\prec\tau\Leftrightarrow\begin{cases}
        X_0^{b_0}X_1^{b_1}X_2^{b_2}\prec_{\hbox{drl}}X_0^{d_0}X_1^{d_1}X_2^{d_2},\hbox{ or}\\
        X_0^{b_0}X_1^{b_1}X_2^{b_2}=X_0^{d_0}X_1^{d_1}X_2^{d_2}\wedge T_0^{a_0}T_1^{a_1}\prec_{\hbox{drl}}T_0^{c_0}T_1^{c_1}
    \end{cases},$$
    where $\prec_{\hbox{drl}}$ indicates the degree reverse lexicographic ordering with $X_2\prec_{\hbox{drl}}X_0\prec_{\hbox{drl}}X_1$ and $T_0\prec_{\hbox{drl}}T_1$.
\end{definition}
We will be using the same notation from Section \ref{sec:mainAlgorithm}, where the integer $i>1$ is the smallest integer which satisfies $b^{(1)}_i - b^{(1)}_{i-1} = b^{(2)}_i-b^{(2)}_{i-1}$ and $q=\min\{ \frac{d_1}{\gcd(d_1,u_1)}, \frac{d_2}{\gcd(d_2,u_2)}\}$.
\begin{lemma}\label{lem:UniquePureX_1PowerBinomial}
 Let $j$ be a positive integer such that $j<\min\{i,q\}$. Then, for each $k\in\{0,1\}$, there exists exactly one integer $0\leq \ell_{k+1}<d_{k+1}$ such that $\ker(\phi)\subset \Kbb \left[ T_0,T_1,X_0,X_1,X_2\right]$ contains a binomial $\alpha-\beta$ with $\alpha=T_k^{\ell_{k+1}} X_1^j$ and $\gcd(\alpha,\beta)=1$. Moreover, if $k=0$, then $\ell_1=d_1-a^{(1)}_j $, and if $k=1$, then $\ell_2=d_2-a^{(2)}_j.$
 
 Furthermore, if we consider the normalizad minimal generating set $G\left(\Rk(I)\right)$ with respect to the term ordering described above, then all its elements except for the last one, $g_r$, are binomials of the form studied in this lemma. The leading term of the element $g_r=\a_r - \b_r$ 
 is of the form $\a_r= T_0^{\ell_1}T_1^{\ell_2}X_1^j$ with $0\leq \ell_1 < d_1$, $0\leq \ell_2 < d_2$, and $j=\min\{i,q\}$. In fact, only if $i < q$ and $b^{(1)}_i-b^{(1)}_{i-1}=b^{(2)}_i-b^{(2)}_{i-1}=0$ then both $T_0,T_1\mid \a_r$; otherwise, $\alpha_r=T_k^{\ell_{k+1}} X_1^j$ for some $k\in\{0,1\}$ (just like for the rest of the minimal generators).
\end{lemma}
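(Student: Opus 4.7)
The plan is to analyze directly the condition $\phi(\alpha) = \phi(\beta)$ when $\alpha = T_k^{\ell_{k+1}} X_1^j$ and $\gcd(\alpha,\beta)=1$, and then identify the resulting binomials with those already produced by Proposition \ref{prop:general3} and Propositions \ref{prop:general1}, \ref{prop:generalLimit}.

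For the existence and uniqueness part, I will fix $k=0$ (the case $k=1$ is symmetric). The coprimality condition $\gcd(T_0^{\ell_1} X_1^j, \beta) = 1$ forces $\beta$ to be of the form $T_1^{b} X_0^{c} X_2^{e}$ with $c+e=j$ (coming from $\beta$ having no $X_1$ because of $X_1^j \mid \alpha$, no $T_0$ because of $T_0^{\ell_1} \mid \alpha$, and matching total $X$-degree). The equality $\phi(\alpha) = \phi(\beta)$ then gives the system
\begin{align*}
\ell_1 + u_1 j &= d_1 c, \\
u_2 j &= b + d_2 e, \\
c + e &= j.
\end{align*}
The first equation forces $\ell_1 \equiv -u_1 j \pmod{d_1}$, which in the range $0 \leq \ell_1 < d_1$ determines $\ell_1$ uniquely as $\ell_1 = d_1 - a^{(1)}_j$ (which is strictly positive since $j<q$ guarantees $a^{(1)}_j \neq 0$). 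Substituting back gives $c = b^{(1)}_j + 1$, and then using the identity $b^{(1)}_j + b^{(2)}_j = j-1$ (valid for $j < i$ as shown in the proof of Proposition \ref{prop:generalLimit}) yields $e = b^{(2)}_j$ and $b = a^{(2)}_j$. So the binomial is necessarily $T_0^{d_1 - a^{(1)}_j} X_1^j - T_1^{a^{(2)}_j} X_0^{b^{(1)}_j + 1} X_2^{b^{(2)}_j}$, uniquely. The case $k=1$ is entirely analogous, producing $T_1^{d_2-a^{(2)}_j} X_1^j - T_0^{a^{(1)}_j} X_0^{b^{(1)}_j} X_2^{b^{(2)}_j+1}$.

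For the "furthermore" statement, I will compare these uniquely-determined binomials with the explicit list of essential binomials produced by Algorithm~\ref{general:algorithm}. Proposition~\ref{prop:general3} says that for $j < \min\{i,q\}$ the essential binomials are exactly of one of the two shapes above (depending on whether $a^{(1)}_j$ is a new maximum or new minimum), and by Lemma~\ref{lem:ComplementarySeq} we obtain one such binomial at each step that records a new extremum. After choosing normalization so that the leading monomial (with respect to the stated block term order) appears with coefficient $+1$, the leading term of every such $g_\ell$ with $\ell < r$ is of the form $T_k^{\ell_{k+1}}X_1^j$ as claimed: one checks that under the block ordering the pure-$X_1^j$ component dominates the $X_0^{*}X_2^{*}$ component in the drl order (since $X_1$ is the largest $X$-variable).

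The remaining case is $g_r$, produced at the terminal step $j = \min\{i, q\}$. If $q \leq i$, Proposition~\ref{prop:general1} gives the binomial from equation (\ref{syz_proof_a}) or (\ref{syz_proof_b}); in either case the $T$-part of the leading term is supported on only one of $T_0$ or $T_1$ (because $a^{(1)}_q = 0$ or $a^{(2)}_q = 0$), so $\alpha_r = T_k^{\ell_{k+1}}X_1^j$ for a single $k$, possibly with $\ell_{k+1} = 0$. If $i < q$, Proposition~\ref{prop:generalLimit} applies: in case (i) the leading term is $T_0^{d_1 - a^{(1)}_i} T_1^{d_2-a^{(2)}_i} X_1^i$ with both $T_0$ and $T_1$ appearing with strictly positive exponents (since $i < q$ forces $a^{(1)}_i, a^{(2)}_i < d_1, d_2$ respectively), while in case (ii) both exponents $b^{(1)}_i - b^{(1)}_{i-1}$ and $b^{(2)}_i - b^{(2)}_{i-1}$ are $1$, so the leading term is $X_1^i$ (which fits the form $T_k^0 X_1^i$). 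This exhausts all cases.

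The main obstacle is keeping the bookkeeping straight between the two $k$-cases and the three possible shapes of $g_r$; the actual algebra in each case reduces to the identities $u_k j = d_k b^{(k)}_j + a^{(k)}_j$ and $b^{(1)}_j + b^{(2)}_j = j-1$ for $j < i$, both of which are already established.
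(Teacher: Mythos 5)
Your proof is correct and takes essentially the same route as the paper's: both arguments come down to the congruence $\ell_{k+1}\equiv -u_{k+1}\,j \pmod{d_{k+1}}$ together with the identity $b^{(1)}_j+b^{(2)}_j=j-1$ (the paper packages this as locating the two lexicographic neighbours of $\psi(X_1^j)$ in $G^{(j)}(I)$, you as solving the exponent system for $\beta$ directly), and your case analysis of $g_r$ merely spells out what the paper dismisses as clear from Propositions \ref{prop:general1}, \ref{prop:generalLimit} and \ref{prop:general3}. The only pinhole is that your deduction $T_0\nmid\beta$ presupposes $\ell_1>0$, so uniqueness still requires excluding $\ell_1=0$ separately; this is immediate, since then $\beta=T_0^{m_0}T_1^{m_1}X_0^{c}X_2^{e}$ with $m_0,m_1\geq 0$ would force $c\leq b^{(1)}_j$ and $e\leq b^{(2)}_j$, contradicting $c+e=j>j-1=b^{(1)}_j+b^{(2)}_j$.
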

\begin{proof}
 In the same way as we did in Section \ref{sec:mainAlgorithm} (Propositions \ref{prop:generalLimit} and \ref{prop:general3}), for every power $X_1^j$ for $j<\min\{i,q\}$ we consider the two elements that come from a product of powers of $X_0$ and $X_2$ that are lexicographically closest to $\psi(X_1^j)$ in the ordered set $G^{(j)}(I)$, one prior and one after. We must show that the only two binomials described above come from the two relations between the power of $X_1$ and these two elements.\\
 Note that we have $\deg_{T_0}(\phi(X_1^j))=u_1 j$. For any integers $t\geq 0$ and $s\geq 0$, we have $\deg_{T_0}(\phi(X_0^t X_2^s))=d_1 t$. Remember from Section \ref{sec:mainAlgorithm} that $b^{(1)}_j+b^{(2)}_j=j-1$ for $j<i$; hence, $X_0^{b^{(1)}_j+1}X_2^{b^{(2)}_j}$ is the element lexicographically before $X_1^j$, and $X_0^{b^{(1)}_j}X_2^{b^{(2)}_j+1}$ is the element after it. As desired, we get from the first relation $\a_1=T_0^\ell X_1^j$ with $\ell=d_1-a^{(1)}_j$; and from the second $\a_1=T_1^\ell X_1^j$ with $\ell=d_2-a^{(2)}_j$.

 The statements about the elements of $G\left(\Rk(I)\right)$ are clear from Propositions \ref{prop:general1}, \ref{prop:generalLimit}, and \ref{prop:general3}. This can also be seen in Algorithm \ref{general:algorithm}. 
\end{proof}

For this generating set to become a Gr\"obner basis for the order previosly defined, we only add one element representing the trivial syzygy between $\psi(X_0)=T_0^{d_1}$ and $\psi(X_2)=T_1^{d_2}$.

\begin{theorem}\label{prop:degrevlexGB}
  Let $d_1,d_2,u_1,u_2$ be four integers and $G\left(\Rk(I)\right)$ be the normalized minimal generating set w.r.t. the term order described above for the Rees ideal $\Rk(I)\subset \Kbb \left[ T_0,T_1,X_0,X_1,X_2\right]$ of the monomial ideal $I=\langle T_0^{d_1},T_0^{u_1}T_1^{u_2},T_1^{d_2} \rangle \subset \Kbb \left[ T_0,T_1\right]$. Furthermore, let $g_0:= T_1^{d_2}\cdot X_0-T_0^{d_1}\cdot X_2$.
  
Then, $\overline{G}\left(\Rk(I)\right):=\{ g_0 \} \cup G\left(\Rk(I)\right)$ is the minimal Gr\"{o}bner basis of $\Rk(I)$ for that elimination order.
\end{theorem}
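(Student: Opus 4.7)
The plan is to apply Buchberger's criterion to $\overline{G}(\Rk(I))$ after identifying the leading term of every element. In the chosen order, on monomials of equal $X$-degree $j$, the pure power $X_1^j$ beats any monomial $X_0^a X_2^c$ with $a+c=j$: writing exponent vectors in the variable order $X_1>X_0>X_2$, the difference $(j,-a,-c)$ has a strictly negative rightmost nonzero entry. Hence, for the upper generator from Proposition \ref{prop:general3} the leading term is $T_0^{d_1-a^{(1)}_j}X_1^j$; for a lower generator it is $T_1^{d_2-a^{(2)}_j}X_1^j$; for $g_r$ it contains $X_1^{\min(i,q)}$ multiplied by $T_0^{\ell_1}$, $T_1^{\ell_2}$, or $T_0^{\ell_1}T_1^{\ell_2}$ as described in Lemma~\ref{lem:UniquePureX_1PowerBinomial}; and $\mathrm{LT}(g_0)=T_1^{d_2}X_0$ (since $X_0\succ X_2$ in drl).

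Next I would verify minimality. Lemma~\ref{lem:MinMaxGeneralRelation}(1a)--(2a) shows that consecutive upper (resp.\ lower) generators come with strictly increasing $j$ and strictly increasing $a^{(1)}_j$ (resp.\ $a^{(2)}_j$), so the $T_0$-exponent $d_1-a^{(1)}_j$ (resp.\ $T_1$-exponent $d_2-a^{(2)}_j$) strictly decreases in $j$, ruling out divisibility within each family. Upper and lower leading terms involve disjoint $T$-variables, so they are incomparable, with the partial exception of $g_r$ whose $X_1$-exponent $\min(i,q)$ is strictly larger than that of every previously added generator. Finally, $\mathrm{LT}(g_0)$ is the only leading term containing $X_0$, and its $T_1$-exponent $d_2$ is at most the $T_1$-exponent appearing in any other leading term involving $T_1$, so $\mathrm{LT}(g_0)$ is incomparable with all the rest.

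The core of the proof is the Buchberger step. The key computation is
\[
S(g_1,g_2)\;=\;T_1^{d_2-u_2}g_1-T_0^{d_1-u_1}g_2\;=\;-\bigl(T_1^{d_2}X_0-T_0^{d_1}X_2\bigr)\;=\;-g_0,
\]
which shows exactly why $g_0$ must be adjoined to $G(\Rk(I))$. For every other pair $(g_i,g_{i'})$ with $i,i'\geq 1$, both leading terms share a factor $X_1^{\min(j_i,j_{i'})}$; after cancellation the $S$-polynomial is a pure $T$-multiple of a binomial in $X_0,X_2$, and Lemma~\ref{lem:MinMaxGeneralRelation} together with Corollary~\ref{cor:nextrelevant} tells us which intermediate upper/lower generator performs the reduction. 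When the intermediate reduction produces a factor $T_0^{d_1}X_2$ or $T_1^{d_2}X_0$, the last step uses $g_0$ itself. For pairs involving $g_0$, the $\mathrm{lcm}$ of the leading terms has the form $T_1^{\max(d_2,c)}X_0X_1^{j_i}$, and the resulting $S$-polynomial reduces through $g_1$ followed by the chain of upper generators.

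The main obstacle is precisely this combinatorial case analysis: for each pair of leading terms one must identify the correct intermediate generator (or chain of them) that witnesses the reduction, using the structure of $\Delta_{\min}/\Delta_{\max}$ developed in Section~\ref{sec:mainAlgorithm}. The adjacency data in the Rees graph of Section~\ref{sec:graph} encodes exactly these relations, so once the graph is leveraged the bookkeeping becomes transparent and each individual reduction is a short explicit monomial computation; the minimality verification then closes the argument.
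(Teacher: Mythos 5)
Your overall strategy (Buchberger's criterion plus a divisibility check for minimality) is a legitimate route and is genuinely different from the paper's: the paper never runs Buchberger on $\overline{G}(\Rk(I))$, but instead argues directly that for \emph{every} binomial $\alpha-\beta\in\Rk(I)$ with $\gcd(\alpha,\beta)=1$ the leading monomial $\alpha$ is divisible by some $\lt(g)$, splitting into cases according to whether $\alpha_X$ is a pure power of $X_1$, is divisible by $X_0$, or is divisible by $X_2$, and invoking Lemma~\ref{lem:UniquePureX_1PowerBinomial} and the extremality built into Algorithm~\ref{general:algorithm}. That direct argument avoids all pairwise S-polynomial bookkeeping (which the paper only carries out later, in Section~\ref{sec:nm_resolution}, where it is needed anyway to build the resolution via the colon ideals $M_j$). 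Your identification of the leading terms, the computation $S(g_1,g_2)=-g_0$, and the minimality discussion are essentially sound (though your claim that the $T_1$-exponent $d_2$ of $\lt(g_0)$ is ``at most'' that of the other leading terms is backwards and also unnecessary: $\lt(g_0)$ is the only leading term containing $X_0$ and the only one not containing $X_1$, which already settles incomparability in both directions).

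However, as written the proposal has a genuine gap at its core. First, your structural description of the S-polynomials is wrong: for $g_i,g_{i'}$ with $\lt(g_i)=T_\bullet^{a}X_1^{j}$, $\lt(g_{i'})=T_\bullet^{b}X_1^{j'}$ and $j<j'$, the S-polynomial is $T_\bullet^{b}X_1^{j'-j}\,\mathrm{tt}(g_i)-T_\bullet^{a}\,\mathrm{tt}(g_{i'})$, a binomial one of whose terms still carries a positive power of $X_1$; it is not ``a pure $T$-multiple of a binomial in $X_0,X_2$'', and its reduction must in general pass through further generators whose leading terms involve $X_1$ (this is exactly the term $w(j,k)\mathbf{e}_h$ in Theorem~\ref{thm:FirstSyzygies}). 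Second, and more importantly, the entire verification that each S-polynomial reduces to zero --- which you yourself identify as ``the main obstacle'' --- is only asserted to be doable via Lemma~\ref{lem:MinMaxGeneralRelation} and Corollary~\ref{cor:nextrelevant}, not actually carried out; nor do you invoke a chain/lcm criterion to cut the quadratically many pairs down to the adjacent ones encoded by the graph. Since this case analysis is precisely the mathematical content of the theorem under your approach, the proposal is a plan rather than a proof until that step is supplied (or until you switch to the paper's direct leading-term argument, which sidesteps it).
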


\begin{proof}
 For a given monomial $\sigma=T_0^{a_0}T_1^{a_1}X_0^{b_0}X_1^{b_1}X_2^{b_2}$, write $\sigma_X$ for the specialization $\sigma|_{T_0=T_1=1}$. By construction, it is clear that for any binomial $g=\sigma-\tau\in\overline{G}(\Rk(I))$, we have $\sigma_X\neq\tau_X$, and thus the leading term of $g$ with respect to our elimination order $\prec$ only depends on $\sigma_X$ and $\tau_X$. Hence, $\lt(g_0)=T_1^{d_2}X_0$, and by Lemma \ref{lem:UniquePureX_1PowerBinomial}, $\lt(g_r)=T_0^{\ell_1}T_1^{\ell_2} X_1^j$, for some integers $\ell_1,\ell_2$ and $j$ with $0\leq \ell_1 < d_1$, $0\leq \ell_2 < d_2$, and $j=\min\{i,q\}$; and for any other $g\in \overline{G}(\Rk(I))$, we have $\lt(g)$ is either $T_0^{\ell_1} X_1^j$ or $T_1^{\ell_2} X_1^j$  for some  $1\leq j < \min\{i,q\}$ as seen in Lemma \ref{lem:UniquePureX_1PowerBinomial}. 
 
 Let now $\sigma=T_i^\ell X_1^m\in\lt(G(\Rk(I)))$ where $i\in\{0,1\}$. Then $\lt(\Rk(I))$ contains no strict divisor of $\sigma$. Indeed, as the exponent $\ell$ of $T_i$ is derived in Algorithm~\ref{general:algorithm} from an extreme value, and as such a divisor $\alpha$ would be of the form stated in Lemma~\ref{lem:UniquePureX_1PowerBinomial}, the existence of a strict divisor would contradict the correctness of the algorithm (Theorem \ref{thm:general}).
 
 Now consider any binomial $b=\alpha-\beta\in\Rk(I)$, where $\alpha\succ\beta$. We need to show that there is an element $g\in\overline{G}(I)$ such that $\lt(g)$ divides $\lt(b)=\alpha$. If the monomials $\alpha$ and $\beta$ are not coprime, then we have that $b/\gcd(\alpha,\beta)$ is also in $\ker(\phi)$. So from now on we consider that $\alpha$ and $\beta$ are coprime. It is clear that $\deg(\alpha_X)=\deg(\beta_X)>0$. We now analyse several cases:
 
Case 1: $\alpha_X=X_1^s$ for some positive integer $s$. If $s\geq\min\{i,q\}$, then $\a$ is either divisible by $\lt(g_r)$ as we saw from Propositions \ref{prop:general1} and \ref{prop:generalLimit}; or otherwise we are not looking at a binomial with the smallest coefficients $T_i$ and then $\lt(g_1)$ or $\lt(g_2)$ must divide $\a$. If $s<\min\{i,q\}$, then write $\alpha=T_i^r X_1^s$ for some integer $r$ and for some $i\in\{0,1\}$. If $r\geq d$, it is obvious that $\alpha$ is divisible by either $\lt(g_1)$ or $\lt(g_2)$. If $r<d$, then $\alpha-\beta$ is a binomial of the form stated in Lemma~\ref{lem:UniquePureX_1PowerBinomial}. Moreover, there is a maximal integer $1\leq j\leq s$ such that $\lt(G(I))$ contains a term of the form $\sigma=T_k^{\ell_{k+1}}X_1^{j}$, where $\ell_{k+1}$ is chosen via an extreme property in Algorithm~\ref{general:algorithm}. It follows that $\sigma$ divides $\alpha$.

Case 2: $\alpha_X$ is divisible by $X_0$ and $\beta_X$ is divisible by $X_2$. Then $\deg_{T_1}(\phi(\alpha_X))\leq \deg_{T_0}(\phi(\beta_X))-d_2$. But then, since $\phi(\alpha)=\phi(\beta)$, $T_1^{d_2}|\alpha$. Hence, $T_1^{d_2} X_0=\lt(g_0)|\alpha$.

Case 3: $\alpha_X$ is divisible by $X_2$ and $\beta_X$ is not divisible by $X_0$, This case cannot occur, because then $\beta_X$ is a pure power of $X_1$ and then $\beta$ would be the leading term of the binomial $\alpha-\beta$.

By Lemma \ref{lem:UniquePureX_1PowerBinomial} it is clear that $X_1 \mid \lt(g_i)$ and $X_0,X_2 \nmid \lt(g_i)$ for all $g_i \in G\left(\Rk(I)\right)$ and it is also clear (from Algorithm \ref{general:algorithm}) that there is no divisibility among the leading terms of the minimal generators. Besides, note that $X_1 \nmid \lt(g_0)$ but $X_0\mid \lt(g_0)$, hence we can guarantee that there is no divisibility either among the leading terms of $\overline{G}\left(\Rk(I)\right)$. Moreover, since this set is already a normalized  Gr\"{o}bner basis for that specific term order with the smallest size possible we deduce its minimality.
\end{proof}

\subsection{A free resolution of $\Rk(I)$}\label{sec:nm_resolution}
The goal of this section is to set the ground for the proof that $\mathrm{Graph}(\Rk(I))$ encodes the minimal free resolution of $\Rk(I)$. For this, we first augment the graph with the new generator $g_0$ introduced in the previous paragraphs. As we will see below, the element $g_0$ behaves in a similar way as all other lower generators so we place it as the leftmost element of the bottom row of the graph. The new augmented graph $\overline{\mathrm{Graph}}(I)=\overline{\mathrm{Graph}}(d_1,d_2,u_1,u_2)$ has one more vertex, corresponding to $g_0$, two new edges, namely $g_0\rightarrow g_2$ and $g_1\rightarrow g_2$; and a new triangle, formed by the vertices $g_1, g_2, g_3$ and the edges that connect them. The augmented graph $\overline{\mathrm{Graph}}(I)$ encodes a free resolution of $\Rk(I)$. In Section \ref{sec:minimal_resolution} this resolution will be reduced to a minimal one.

\begin{example}\label{ex:AssociatedGraph}
Figure \ref{fig:augmented_example} shows the augmented graph $\overline{Graph}(15,13,9,6)$.
     \begin{figure}[ht!]
\begin{center}
\begin{tikzpicture}[->,
shorten >=1pt,node distance=2cm,semithick]

  \node[draw, rectangle, line width= 2pt]  (g0)                 {$g_0$};
  \node[draw, circle, line width= 2pt]  (g2) [ right of=g0] {$g_2$};
  \node[draw, circle]  (g1) [above of=g2] {$g_1$};
  \node[draw, circle, line width= 2pt]  (g3) [right of=g2] {$g_3$};
  \node[draw, circle]  (g4)  at (5.33, 2)  {$g_4$};
  \node[draw, circle, line width= 2pt]  (g5)  at (6.66, 0) {$g_5$};

  \path (g0) edge[draw]      (g2)
        (g1) edge[draw]            (g2)
        edge           (g3)
        edge           (g4)
        (g2) edge          (g3)
        (g3) edge            (g5)
        edge            (g4)
        (g4) edge       (g5);
\end{tikzpicture}
\end{center}
\caption{$\overline{\mathrm{Graph}}(15,13,9,6)$.}\label{fig:augmented_example}
\end{figure}
\end{example}

For the analysis of the syzygies of the Gr\"{o}bner basis $\overline{G}\left(\Rk(I)\right)$, recall the two sets $\Delta_{\max/\min}(d_k,u_k)$ defined in Section \ref{sec:mainAlgorithm}. Since nothing relevant happens for steps after $t=\min\{i,q\}$, we can remove all values strictly higher than $t$ from every $\Delta_{\max/\min}(d_k,u_k)$. Also, let $\Delta(d_1,d_2,u_1,u_2):=\{\deg_{X_1}(\lt(g))\mid g\in  G\left(\Rk(I)\right) \}$. Note that $\Delta(d_1,d_2,u_1,u_2)=\Delta_{\max}(d_1,u_1)\cup \Delta_{\min}(d_1,u_1)=\Delta_{\min}(d_2,u_2)\cup \Delta_{\max}(d_2,u_2)$.

We also need to make use of the following monomial ideals:

\begin{definition}\label{defn:AssociatedColonIdeals}
Write $\overline{G} \left(\Rk(I)\right) =\{g_0,g_1,\ldots,g_r\}$. For $0\leq j<r$ consider the quotient monomial ideals $$M_j=\langle \lt(g_{j+1}),\ldots,\lt(g_r) \rangle:\langle\lt(g_j)\rangle,$$
and let $G(M_j)$ be the minimal monomial generating set of $M_j$.
\end{definition}

By a well-known construction, the set of syzygies of $\overline{G} \left(\Rk(I)\right)$ induced by multiplying any $g_j\in \overline{G} \left(\Rk(I)\right)$ by any minimal generator $m\in G(M_j)$ and reducing to zero with respect to $\overline{G} \left(\Rk(I)\right)$ is a Gr\"{o}bner basis of $\Syz\left(\overline{G}\left(\Rk(I)\right)\right)$ with respect to a suitable module term order (e.g. \cite[Cor. 1.11]{BerkeschSchreyer:SyzygiesFiniteLengthModulesRandomCurves}, \cite{LaScalaStillman:StrategiesForComputingMinFreeRes}). We will use this fact. 


In order to understand the structure of $\overline{\mathrm{Graph}}(I)$, we will use the colon ideals $M_j$. For their description, we define an auxiliary map.

\begin{definition}\label{defn:AuxiliaryMapToGBIndexSet}
 For each $\delta\in\Delta(d_1,d_2,u_1,u_2)\setminus\{1\}$, there is a unique $g_j\in\overline{G}\left(\Rk(I)\right)\setminus\{g_0,g_1,g_2\}$ such that $\deg_{X_1}(g_j)=\delta$. Write $\iota(\delta):=j$.
\end{definition}

Note that $\iota:\Delta(d_1,d_2,u_1,u_2)\setminus\{1\}\longrightarrow \{3,\ldots,r\}$ is bijective.

\begin{proposition}\label{prop:ColonIdealStructure}
Let $t=\min\{i,q\}$. $\overline{G}\left(\Rk(I)\right)$, $M_j$, and $G(M_j)$ be given as in Definition~\ref{defn:AssociatedColonIdeals}. Then:
\begin{itemize}
 \item $G(M_0)=\{X_1\}$,
 \item $G(M_1)=\{T_1^{d_2-u_2}\}\cup \{\lt(g_{\iota(\zeta_1)})/X_1,\ldots,\lt(g_{\iota(\zeta_\ell)})/X_1,\lt(g_{\iota(\epsilon)})/T_0^{\deg_{T_0}\left( \lt(g_{\iota(\epsilon)})\right)}X_1\}$, where:
 $\epsilon=\min\{\gamma\in\Delta_{\max}(d_1,u_1)\cup \{t\}\mid \gamma>1\}$, and $\zeta_1<\cdots<\zeta_\ell$ are the elements of $\Delta_{\min}(d_1,u_1)$ between $1$ and $\epsilon$,
 \item $G(M_2)=\{\lt(g_{\iota(\zeta_1)})/X_1,\ldots,\lt(g_{\iota(\zeta_\ell)})/X_1,\lt(g_{\iota(\epsilon)})/T_1^{\deg_{T_1}\left( \lt (g_{\iota(\epsilon)})\right)}X_1\}$, where:
 $\epsilon=\min\{\gamma\in\Delta_{\min}(d_1,u_1)\cup \{t\} \mid \gamma>1\}$, and $\zeta_1<\cdots<\zeta_\ell$ are the elements of $\Delta_{\max}(d_1,u_1)$ between $1$ and $\epsilon$,
 \item For $2<j<t$, \begin{itemize}
                   \item If $\delta:=\iota^{-1}(j) \in\Delta_{\min}(d_1,u_1)$, then $$G(M_j)=\{ \lt(g_{\iota(\zeta_1)})/X_1^\delta,\ldots,\lt(g_{\iota(\zeta_\ell)})/X_1^\delta,\lt(g_{\iota(\epsilon)})/T_1^{\deg_{T_1}\left( \lt(g_{\iota(\epsilon)})\right)}X_1^\delta \},$$ where:
                   $\epsilon=\min\{\gamma\in\Delta_{\min}(d_1,u_1)\cup \{t\}\mid \gamma>\delta\}$, and $\zeta_1<\cdots<\zeta_\ell$ are the elements of $\Delta_{\max}(d_1,u_1)$ between $\delta$ and $\epsilon$,
                   \item If $\delta:=\iota^{-1}(j) \in\Delta_{\max}(d_1,u_1)$, then $$G(M_j)=\{ \lt(g_{\iota(\zeta_1)})/X_1^\delta,\ldots,\lt(g_{\iota(\zeta_\ell)})/X_1^\delta,\lt(g_{\iota(\epsilon)})/T_0^{\deg_{T_0}\left(\lt(g_{\iota(\epsilon)})\right)}X_1^\delta \},$$ where:
                   $\epsilon=\min\{\gamma\in\Delta_{\max}(d_1,u_1)\cup \{t\}\mid \gamma>\delta\}$, and $\zeta_1<\cdots<\zeta_\ell$ are the elements of $\Delta_{\min}(d_1,u_1)$ between $\delta$ and $\epsilon$.
                  \end{itemize}

\end{itemize}

\end{proposition}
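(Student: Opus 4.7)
The plan is direct case analysis. Because each $\lt(g_k)$ is a monomial, $M_j$ is generated (not necessarily minimally) by the elements $m_{j,k}:=\lcm(\lt(g_j),\lt(g_k))/\lt(g_j)$ for $k=j+1,\ldots,r$, so the task reduces to computing each $m_{j,k}$ explicitly and discarding those divisible by another. From Lemma~\ref{lem:UniquePureX_1PowerBinomial} and Theorem~\ref{prop:degrevlexGB} the leading terms of $\overline{G}(\Rk(I))$ take one of three shapes: $\lt(g_0)=T_1^{d_2}X_0$; $\lt(g_1)=T_0^{d_1-u_1}X_1$ and $\lt(g_2)=T_1^{d_2-u_2}X_1$; and for $3\leq k<r$ with $\delta=\iota^{-1}(k)$, $\lt(g_k)=T_0^{d_1-a^{(1)}_\delta}X_1^\delta$ when $\delta\in\Delta_{\max}(d_1,u_1)$ and $\lt(g_k)=T_1^{d_2-a^{(2)}_\delta}X_1^\delta$ when $\delta\in\Delta_{\min}(d_1,u_1)$; the term $\lt(g_r)$ has the form stipulated by Lemma~\ref{lem:UniquePureX_1PowerBinomial}, possibly carrying both $T_0$ and $T_1$ factors in the exceptional scenario.

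The case $j=0$ is immediate: every $m_{0,k}$ is divisible by $X_1$, and $m_{0,2}=X_1$, hence $G(M_0)=\{X_1\}$. The cases $j=1$ and $j=2$ are dual under the swap $(T_0,\Delta_{\max})\leftrightarrow(T_1,\Delta_{\min})$, so I would treat only $j=1$, relying on Lemma~\ref{lem:ComplementarySeq} (which identifies $\Delta_{\min}(d_1,u_1)\cap[i-1]$ with $\Delta_{\max}(d_2,u_2)\cap[i-1]$) to control the $a^{(2)}$-values of the candidates; in particular $m_{1,2}=T_1^{d_2-u_2}$ is minimal because every other $m_{1,k}$ has $X_1$-exponent at least $1$. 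The main work is the generic case $2<j<r$, which by the same duality I reduce to $\delta=\iota^{-1}(j)\in\Delta_{\min}(d_1,u_1)$, whence $\lt(g_j)=T_1^{d_2-a^{(2)}_\delta}X_1^\delta$.

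Here, for $k>j$ with $\eta=\iota^{-1}(k)$: if $\eta\in\Delta_{\min}(d_1,u_1)$, strict monotonicity of $a^{(2)}$ along $\Delta_{\max}(d_2,u_2)$ (a direct consequence of Lemma~\ref{lem:ElementaryPropertiesegreeSet} together with the distinctness of $a^{(2)}$-values on $[q]$) yields $a^{(2)}_\eta>a^{(2)}_\delta$, the $T_1$-factor of the $\lcm$ comes from $\lt(g_j)$, and $m_{j,k}=X_1^{\eta-\delta}$; if $\eta\in\Delta_{\max}(d_1,u_1)$, the $T$-factors of $\lt(g_j)$ and $\lt(g_k)$ are disjoint and $m_{j,k}=T_0^{d_1-a^{(1)}_\eta}X_1^{\eta-\delta}$. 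The smallest pure-$X_1$ candidate is thus $X_1^{\epsilon-\delta}$ for $\epsilon$ the smallest next min-index (or $t$); each $\zeta_k\in\Delta_{\max}(d_1,u_1)\cap(\delta,\epsilon)$ contributes a minimal hybrid $T_0^{d_1-a^{(1)}_{\zeta_k}}X_1^{\zeta_k-\delta}$, and strict monotonicity of $a^{(1)}$ along $\Delta_{\max}(d_1,u_1)$ forces these to be pairwise incomparable in divisibility. Upper candidates with $\eta>\epsilon$ have $X_1$-exponent $>\epsilon-\delta$ and are absorbed by $X_1^{\epsilon-\delta}$. Rewriting each surviving element in the form $\lt(g_{\iota(\cdot)})/X_1^\delta$ or $\lt(g_{\iota(\epsilon)})/T_1^{\deg_{T_1}(\lt(g_{\iota(\epsilon)}))}X_1^\delta$ matches the proposition's formula, and the case $\delta\in\Delta_{\max}(d_1,u_1)$ is symmetric.

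The main obstacle I foresee is the boundary $\epsilon=t$ when $\lt(g_r)$ carries both $T_0$ and $T_1$ factors (the exceptional case of Lemma~\ref{lem:UniquePureX_1PowerBinomial}): the pure-$X_1$ minimal candidate degenerates to the hybrid $T_0^{d_1-a^{(1)}_t}X_1^{t-\delta}$, which is precisely what the proposition's stripping formula produces. Verifying that no extra minimal candidate sneaks in at the stopping step, and that Lemma~\ref{lem:MinMaxGeneralRelation} together with Corollary~\ref{cor:nextrelevant} still pins down the interlacing of $\Delta_{\max}$ and $\Delta_{\min}$ at $t$, is the only genuinely subtle piece of bookkeeping.
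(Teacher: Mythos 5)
Your proposal is correct and follows essentially the same route as the paper's proof: both start from the generating set $\lcm(\lt(g_j),\lt(g_k))/\lt(g_j)$, use the explicit shapes of the leading terms (a single $T_0$- or $T_1$-power times an $X_1$-power, according to type), and conclude via the monotonicity of the $T$-exponents together with the observation that the first subsequent generator of the same type yields a pure $X_1$-power that absorbs all later candidates, while the intermediate opposite-type generators give pairwise incomparable hybrids. Your treatment of the boundary step at $g_r$ (where $\lt(g_r)$ may carry both $T_0$ and $T_1$) is, if anything, slightly more explicit than the paper's one-sentence dismissal of that case.
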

\begin{proof}
Recall that $\overline{G}\left(\Rk(I)\right)=\{g_0,g_1,g_2,\ldots,g_r\}$, where $g_0=T_1^{d_2} X_0-T_0^{d_1} X_2$, $g_1=T_0^{d_1-u_1}X_1-T_1^{u_2}X_0$, and $g_2=T_1^{d_2-u_2}X_1-T_0^{u_1}X_2$. Moreover, the leading terms of the other elements $g_j$, where $2<j<r$ are of the form $\lt(g_j)=P_j\cdot X_1^{q_j}$ with $2\leq q_j<t$ and $P_j$ a pure power of $T_0$ or of $T_1$. Moreover, by Algorithm~\ref{general:algorithm}, $P_j$ is a power of $T_0$ with positive exponent if and only if $j<r$ and $\iota^{-1}(j)\in\Delta_{\max}(d_1,u_1)\cap \Delta_{\min}(d_2,u_2)$; it is a power of $T_1$ with positive exponent if and only if $j<r$ and $\iota^{-1}(j)\in\Delta_{\min}(d_1,u_1)\cap \Delta_{\max}(d_2,u_2)$. It is also immediate from Algorithm~\ref{general:algorithm} that the exponents of $T_0$ respectively $T_1$ form decreasing sequences for increasing indices $j$. Furthermore, $\lt(g_r)$ is either a pure power of $X_1$ or a power of $X_1$ with coefficients $T_0$ and $T_1$ of lowest degree.

From what has been stated, it is clear that $G(M_0)=\{X_1\}$. For any $j$ with $0<j<r$, the $X_1$-degrees of $\lcm(\lt(g_j),\lt(g_{j+1}))/\lt(g_j),\ldots,\lcm(\lt(g_j),\lt(g_{r}))/\lt(g_j)$ form an increasing sequence. Let $\lt(g_j)=T_k^{p_j}X_1^{q_j}$, where $k\in\{0,1\}$ (i.e. if $g_j$ is an upper generator $k=0$, and if it is a lower generator $k=1$). It is clear that, for the first index $\ell>j$ for which $\lt(g_\ell)$ is not divisible by $T_{1-k}$, we have that $\lcm(\lt(g_j),\lt(g_{\ell})/\lt(g_j)$ is a pure power of $X_1$. This implies $$G(M_j)=\{\lcm(\lt(g_j),\lt(g_{j+1}))/\lt(g_j),\ldots,\lcm(\lt(g_j),\lt(g_{\ell}))/\lt(g_j)\}.$$
Moreover, $\lt(g_{j+1}),\ldots,\lt(g_{\ell-1})$ are all divisible by $T_{1-k}$ and thus, they are all of the same type of generators, while both $\lt(g_j)$ and $\lt(g_\ell)$ are of the opposite type (lower generators vs. upper generators).

In the case that such a index $l$ cannot be found it means that $l=t$ with $\iota(t)=r$, and since $\lt(g_r)$ is either a pure power $X_1^t$ or the highest power of $X_1$ with lowest $T_k$ coefficient, then either way the same argument that we claimed for the other case still holds. The claimed statements are now obvious.
\end{proof}

\begin{proposition}\label{prop:colon_edges}
Let the monomial ideals $M_j$ and their minimal generating sets $G(M_j)$ be given as in Definition~\ref{defn:AssociatedColonIdeals} for the Gr\"{o}bner basis $\overline{G}\left(\Rk(I)\right)$. There exists a directed edge $(j\longrightarrow \ell) \in \mathrm{DE}(\overline{\mathrm{Graph}}(I))$ for every pair of indices $(j,\ell)$ with $j<\ell$ such that $\lcm(\lt(g_j),\lt(g_\ell))/\lt(g_j)\in G(M_j)$. 
\end{proposition}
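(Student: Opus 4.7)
The plan is to verify the claim directly by enumerating the minimal generators of $M_j$ listed in Proposition~\ref{prop:ColonIdealStructure} and, for each such generator, exhibiting the corresponding edge in $\overline{\mathrm{Graph}}(I)$ produced by the construction of Section~\ref{sec:graph}. Concretely, for each $m \in G(M_j)$ I would identify the unique $\ell > j$ for which $m = \lcm(\lt(g_j), \lt(g_\ell))/\lt(g_j)$, and then check that the graph construction places the edge $g_j \longrightarrow g_\ell$ at the moment $g_\ell$ is inserted.

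First I would dispose of the boundary cases $j \in \{0, 1, 2\}$ by direct inspection. For $j=0$, the single generator $X_1$ of $G(M_0)$ matches the augmented edge $g_0 \longrightarrow g_2$, since $\lcm(\lt(g_0),\lt(g_2))/\lt(g_0) = X_1$. For $j=1$, the first generator $T_1^{d_2-u_2}$ of $G(M_1)$ corresponds to the augmented edge $g_1 \longrightarrow g_2$; the generators $\lt(g_{\iota(\zeta_k)})/X_1$ correspond to edges from $g_1$ to the lower generators indexed by $\zeta_k \in \Delta_{\min}(d_1,u_1)$ strictly between $1$ and $\epsilon$; and the last generator matches the edge $g_1 \longrightarrow g_{\iota(\epsilon)}$ to the next upper generator (or to $g_r$ when $\epsilon = t$). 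All of these edges are indeed produced by the graph construction, because $g_1$ remains the rightmost vertex in the upper row from its insertion until the insertion of $g_{\iota(\epsilon)}$. The case $j=2$ is symmetric.

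For $2 < j < r$ I would case-split on whether $\delta := \iota^{-1}(j)$ lies in $\Delta_{\min}(d_1,u_1)$ (so $g_j$ is a lower generator) or in $\Delta_{\max}(d_1,u_1)$ (so $g_j$ is an upper generator). The key combinatorial observation is that, by the graph construction, the outgoing edges from $g_j$ go to exactly those $g_\ell$ inserted at a moment when $g_j$ was still the rightmost vertex in its own row. When $g_j$ is lower, this window closes with the insertion of the next lower generator $g_{\iota(\epsilon)}$, and during the window edges are drawn from $g_j$ to each intermediate upper generator $g_{\iota(\zeta_k)}$ and finally to $g_{\iota(\epsilon)}$ itself. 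The bijection $m \leftrightarrow \ell$ is then completed by a direct computation using the explicit shape $\lt(g_{\iota(\delta')}) = P \cdot X_1^{\delta'}$, where $P$ is a pure power of $T_0$ or $T_1$ according to the type of $g_{\iota(\delta')}$: the quotient $\lcm(\lt(g_j), \lt(g_\ell))/\lt(g_j)$ equals $\lt(g_\ell)/X_1^\delta$ when $\lt(g_j)$ and $\lt(g_\ell)$ carry different $T$-variables, and equals $\lt(g_\ell)/T_k^{\deg_{T_k}\lt(g_\ell)} X_1^\delta$ in the final slot, when they carry the same $T$-variable. The upper-generator case is entirely symmetric.

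The main obstacle is essentially bookkeeping: matching the algorithmic output of Algorithm~\ref{general:algorithm} (which controls when each generator becomes, and ceases to be, rightmost in its row) with the combinatorial structure of $\Delta_{\max}(d_1,u_1)$ and $\Delta_{\min}(d_1,u_1)$ governing the entries of $G(M_j)$. The degenerate case where the algorithm terminates at $t = i$ via the divisibility relation, so that $\lt(g_r)$ carries both a $T_0$- and a $T_1$-factor, has to be handled with extra care; but it uses no ideas beyond Lemmas~\ref{lem:ElementaryPropertiesegreeSet}, \ref{lem:MinMaxGeneralRelation} and~\ref{lem:ComplementarySeq}.
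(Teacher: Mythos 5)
Your proposal is correct and follows essentially the same route as the paper, which simply derives the edge--generator correspondence directly from the explicit description of the sets $G(M_j)$ in Proposition~\ref{prop:ColonIdealStructure}. You spell out the bookkeeping (boundary cases $j\in\{0,1,2\}$, the ``rightmost window'' in the graph construction, and the lcm computations) that the paper leaves implicit in its one-line proof, but no new idea is involved.
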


\begin{proof}
The result is obtained immediately from the description of the ideals $M_j$ that was just given in Proposition \ref{prop:ColonIdealStructure}.
\end{proof}

\begin{corollary}\label{cor:AcyclicityArrowAndTriangleCount}
 For $\overline{G}(I)=\{g_0,g_1,\ldots,g_r\}$, $\overline{\mathrm{Graph}}(I)$ is acyclic and contains $2r-2$ directed edges and $r-2$ triangles.
\end{corollary}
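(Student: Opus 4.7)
My plan is to verify each of the three assertions by direct bookkeeping on the iterative construction of $\mathrm{Graph}(I)$ in Section~\ref{sec:graph}, and then to account for the contribution of the augmentation by $g_0$ described just before this corollary.

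First, for the edge count, the construction places $g_1$ and $g_2$ as isolated nodes and then, for every $i\in\{3,\ldots,r\}$, inserts $g_i$ together with two new directed edges, one from each of the top-rightmost and bottom-rightmost currently placed vertices. This contributes exactly $2(r-2)$ directed edges to $\mathrm{Graph}(I)$; the augmentation explicitly adds the two edges $(g_0\rightarrow g_2)$ and $(g_1\rightarrow g_2)$, bringing the total in $\overline{\mathrm{Graph}}(I)$ to $2r-2$.

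Next, for the triangle count, I would establish by induction on $i$ the following invariant: for each $i\geq 3$, immediately after $g_i$ has been placed, the current top-rightmost and bottom-rightmost vertices of $\mathrm{Graph}(I)$ are joined by a directed edge. The base case is clear since, whether $g_3$ lands in the top or the bottom row, one of $(g_1\rightarrow g_3)$ or $(g_2\rightarrow g_3)$ connects the two rightmost vertices after step $3$. The inductive step is also immediate: inserting $g_{i+1}$ draws an edge from the opposite row's rightmost vertex to $g_{i+1}$, which provides the required adjacency. Because the two incoming edges of any sink $g_\ell$ with $\ell\geq 3$ in $\mathrm{Graph}(I)$ are exactly those drawn at step $\ell$, a triangle with sink $g_\ell$ in $\mathrm{Graph}(I)$ exists precisely when, at step $\ell$, the two source vertices were already joined. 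By the invariant this occurs at each step $\ell\in\{4,\ldots,r\}$ and at no other step, yielding $r-3$ triangles in $\mathrm{Graph}(I)$. For the augmentation, both new edges have sink $g_2$, so a case analysis by the role of the augmenting edge in a potential new triangle shows that the only such triangle is $(g_1,g_2,g_3)$: after step $3$ exactly one of $g_1,g_2$ ceases to be rightmost in its row, so $g_3$ is the unique later vertex reached by both $g_1$ and $g_2$ in $\mathrm{Graph}(I)$, while $g_0$ has the single outgoing edge $(g_0\rightarrow g_2)$ and so cannot be a source of any triangle. Hence $\overline{\mathrm{Graph}}(I)$ has $r-2$ triangles.

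Finally, acyclicity follows from observing that every directed edge drawn in the construction goes from a previously placed node to the freshly inserted one, hence from smaller index to larger, and the same holds for the two augmenting edges. Thus the index order $g_0<g_1<\cdots<g_r$ is a topological sort of $\overline{\mathrm{Graph}}(I)$, which precludes any directed cycle. I expect the triangle-counting to be the main obstacle, specifically the case analysis confirming that the augmentation creates precisely one additional triangle; the edge count and acyclicity are immediate from the construction rule.
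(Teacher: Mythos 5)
Your proposal is correct, and all three counts come out right. The route is, however, organized differently from the paper's. You work entirely from the combinatorial insertion rule of Section~\ref{sec:graph}: two new edges per inserted vertex gives $2(r-2)+2$ edges, and your explicit invariant (``after step $i\ge 3$ the two rightmost vertices are adjacent'') cleanly yields one triangle per sink $g_\ell$ with $\ell\ge 4$, plus the single triangle $(g_1,g_2,g_3)$ created by the augmenting edge $(g_1\to g_2)$; the observation that $g_0$ has total degree one correctly rules out any other new triangle. The paper instead counts by \emph{in-degree}, using the upper/lower generator types and the out-neighbourhood description coming from the colon ideals $M_j$ of Propositions~\ref{prop:ColonIdealStructure} and~\ref{prop:colon_edges}: every $g_k$ with $k\ge 2$ has exactly two incoming edges (one from the previous generator of the same type, one from the latest generator of the opposite type), giving $2(r-1)$ edges, and every $g_k$ with $k>2$ is the sink of exactly one triangle. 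The two arguments are equivalent in substance (your invariant is exactly what makes the paper's ``the two predecessors of any sink are themselves joined'' step work), but yours is more elementary and self-contained with respect to the graph's definition, whereas the paper's phrasing ties the count directly to the algebraic data ($G(M_j)$, $\Delta_{\max}/\Delta_{\min}$) that is reused in Theorems~\ref{thm:FirstSyzygies} and~\ref{thm:SecondSyzygies}. No gap.
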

\begin{proof}
 $\overline{\mathrm{Graph}}(I)$ is acyclic because for any directed edge $(j\longrightarrow k)$ in it, we must have, by construction, $j<k$.
 
 There are two directed edges pointing to $g_2$ in the graph, these correspond to the $X_1\in G(M_0)$ and $T_1^{d_2-u_2}\in G(M_1)$. All other edges have targets $g_k$ for some $k>2$. Moreover, from each $g_j$  with $j\geq 1$ of a given type ($\Delta_{\min}(d_1,u_1)$ vs. $\Delta_{\max}(d_1,u_1)$), there is an arrow to the next $g_k$ of the same type and to all $g_\ell$ between $g_j$ and $g_k$ (of the the opposite type). Thus, for any $g_k$ with $k\geq2$ there are exactly two edges with target $g_k$: One from the previous $g_j$ of the same type as $g_k$, and one from the $g_\ell$ of the opposite type where $\ell$ is the maximal possible index. Thus, there are $2(r-1)=2r-2$ edges.
 
 Now consider any triangle in the graph. By acyclicity there is a unique sink $g_k$ in the triangle. As $g_2$ is targeted by two edges coming from $g_0$ and $g_1$, but no edge from $g_0$ to $g_1$ can ever exist, we have $k>2$. Without loss of generality, assume $\deg_{X_1}(g_k)\in\Delta_{\max}(d_1,u_1)$ i.e. an upper generator. As we saw $g_k$ is targeted by exactly two edges, one coming from the previous upper generator $g_j$, and the other from the previous lower 
 generator i.e. corresponding to the minimum with largest subindex $\ell<k$, $g_\ell$. It is obvious that there is at most one triangle with sink $g_k$. So now, we show that there does exist a triangle with this sink: If $g_\ell$ lies between $g_j$ and $g_k$ i.e. $j<\ell<k$, then there is a directed edge $(j\longrightarrow \ell)$, completing the triangle; otherwise, $\ell<j$. But then, as $g_\ell$ targets the next minimum $g_h$ with $h>k$, and all other maximums in between, so it targets $g_j$ and $g_k$ as desired. Either way, we obtain a triangle.
\end{proof}



\begin{Notation}
Observe that we can write every  binomial $g_j\in \overline{G}(\Rk(I))$ as $g_j=\lt(g_j)-\mathrm{tt}(g_j)$, where $\mathrm{tt}(g_j)$ stands for the tail term of the binomial $g_j$. To ease the description of the formulas of the next two theorems let us now introduce the following notation:
\begin{itemize}
    \item[-]  $v(a,b)=\lcm(\lt(g_a),\lt(g_b))/\lt(g_a)$, 
    \item[-]  $w(a,b)=\gcd(\mathrm{tt}(g_a),\mathrm{tt}(g_b))$.
\end{itemize}
\end{Notation}
 Using this notation we can state and prove the first main result of the section.

\begin{theorem}\label{thm:FirstSyzygies}
 Write $\overline{G}\left(\Rk(I)\right)=\{g_0,g_1,\ldots,g_r\}$, and denote by $\{\mathbf{e}_0,\mathbf{e}_1,\ldots,\mathbf{e}_r\}$ the canonical basis of $(\mathbb{K}[T_0,T_1,X_0,X_1,X_2])^{r+1}$. A Gr\"{o}bner basis $S^{(1)}$ of $\Syz\left(\overline{G}\left(\Rk(I)\right)\right)$ is given by:
\[
  S^{(1)}=  \{ \mathbf{s}_{(j,k)}^{(1)}=v(j,k)\mathbf{e}_j-v(k,j)\mathbf{e}_k\\+w(j,k)\mathbf{e}_h\mid \{ (j\rightarrow k),(h\rightarrow k)\} \subset \mathrm{DE}\left( \overline{\mathrm{Graph}}(I) \right)
  \},
 \]
Observe that $|S^{(1)}|=|\mathrm{DE}\left(\overline{\mathrm{Graph}}(I)\right)|=2r-2$.
\end{theorem}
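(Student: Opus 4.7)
The approach is to apply Schreyer's classical recipe for Gröbner bases of syzygy modules. For each pair of basis elements $(g_j,g_k)\subset\overline{G}(\Rk(I))$ whose S-polynomial is ``critical'' in the sense that $v(j,k)=\lcm(\lt(g_j),\lt(g_k))/\lt(g_j)$ is a minimal generator of the colon ideal $M_j$, the reduction of $S(g_j,g_k)$ to zero against $\overline{G}(\Rk(I))$ yields a generator of $\Syz\bigl(\overline{G}(\Rk(I))\bigr)$, and the collection of such reductions is a Gröbner basis of the syzygy module with respect to a Schreyer-induced module order. By Proposition~\ref{prop:colon_edges} the critical pairs are in bijection with the directed edges of $\overline{\mathrm{Graph}}(I)$, so Corollary~\ref{cor:AcyclicityArrowAndTriangleCount} already delivers the cardinality $|S^{(1)}|=2r-2$.

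The core calculation is to verify that for every edge $(j\longrightarrow k)$, taking $h$ to be the unique other source of an incoming edge to $k$ (which exists and is unique by Corollary~\ref{cor:AcyclicityArrowAndTriangleCount}), the S-polynomial reduces in a single step, i.e.\
\[
v(j,k)\,g_j - v(k,j)\,g_k + w(j,k)\,g_h = 0.
\]
Since the leading terms of the first two summands cancel by construction of $v$, this reduces to the monomial identity
\[
v(j,k)\,\mathrm{tt}(g_j) - v(k,j)\,\mathrm{tt}(g_k) = w(j,k)\bigl(\lt(g_h)-\mathrm{tt}(g_h)\bigr),
\]
up to a global sign. This is checked case by case using the explicit forms of $\lt(g_i)$ and $\mathrm{tt}(g_i)$ delivered by Algorithm~\ref{general:algorithm} together with the colon-ideal description in Proposition~\ref{prop:ColonIdealStructure}. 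The cases split naturally into the initial configuration $k=2$ (where $g_0,g_1,g_2,g_3$ are the only possibilities and the identity is a direct computation involving the first syzygies~(\ref{syz1})–(\ref{syz2})), the interior case $2<k<r$ (where one distinguishes whether $g_k$ is an upper or a lower generator, and whether the auxiliary $g_h$ is of the same or opposite type as $g_j$), and the terminal case $k=r$ (where $g_r$ admits up to three shapes depending on whether $i<q$, $i>q$, or $i=q$ together with $b^{(1)}_i-b^{(1)}_{i-1}=b^{(2)}_i-b^{(2)}_{i-1}=0$ as described in Lemma~\ref{lem:UniquePureX_1PowerBinomial}).

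In each case, after dividing by $w(j,k)=\gcd(\mathrm{tt}(g_j),\mathrm{tt}(g_k))$, the identity collapses to two exponent equations of the shape $d_k b^{(k)}_n+a^{(k)}_n=u_k n$ combined with the complementarity relation $b^{(1)}_n+b^{(2)}_n=n-1$ valid for $n<i$, together with the ordering of successive extrema provided by Lemma~\ref{lem:MinMaxGeneralRelation} and Corollary~\ref{cor:nextrelevant}. Once the identity is in place, Schreyer's theorem yields that the family $S^{(1)}$ is a Gröbner basis of $\Syz\bigl(\overline{G}(\Rk(I))\bigr)$ for the induced module order, completing the proof. The principal technical obstacle is the bookkeeping in this case analysis: no individual subcase is deep, but the interaction between the type of $g_k$, the relative position of $h$ with respect to $j$ (which controls whether the triangle of Corollary~\ref{cor:AcyclicityArrowAndTriangleCount} points ``forward'' or ``backward''), and the boundary behaviour at $g_r$ must be tabulated carefully to ensure that the exponents of $T_0$ and $T_1$ balance correctly in every configuration.
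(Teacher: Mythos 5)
Your proposal follows the same overall architecture as the paper's proof: both rest on the Schreyer-type construction via the colon ideals $M_j$ of Definition~\ref{defn:AssociatedColonIdeals}, use Proposition~\ref{prop:ColonIdealStructure} and Proposition~\ref{prop:colon_edges} to put the critical pairs in bijection with $\mathrm{DE}\left(\overline{\mathrm{Graph}}(I)\right)$, and reduce the whole theorem to the single identity $v(j,k)g_j-v(k,j)g_k+w(j,k)g_h=0$ for the unique second edge $(h\rightarrow k)$. Where you diverge is in how that identity is established. You propose to verify it by an explicit case analysis on the shapes of $\lt(g_i)$ and $\mathrm{tt}(g_i)$ (initial, interior, terminal configurations), collapsing everything to exponent equations of the form $d_kb^{(k)}_n+a^{(k)}_n=u_kn$. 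The paper instead avoids almost all of this bookkeeping with a uniqueness argument: after cancelling the leading terms, the residual $s=-v(j,k)\,\mathrm{tt}(g_j)+v(k,j)\,\mathrm{tt}(g_k)$ lies in $\ker(\phi)$, its leading term is $v(j,k)\,\mathrm{tt}(g_j)$ for $X_1$-degree reasons, and $\tilde{s}:=s/w(j,k)$ is a binomial with coprime terms whose leading-term $X_1$-degree is pinned down by Lemma~\ref{lem:MinMaxGeneralRelation} to be exactly that of the second node $g_h$ targeting $g_k$; Lemma~\ref{lem:UniquePureX_1PowerBinomial} then forces $\tilde{s}=-g_h$ without computing any exponents of $T_0$ or $T_1$. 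Your route is sound and would go through, but the tabulation you flag as the ``principal technical obstacle'' — especially at the boundary cases $k=2$ and $k=r$, where $g_0$, $g_1$, $g_2$ and the up-to-three shapes of $g_r$ must each be treated separately — is precisely the work that the paper's coprimality-plus-uniqueness argument eliminates; if you carry out your version, take care that in each subcase you also justify that the residual is a multiple of $g_h$ for the \emph{correct} $h$ (the other in-neighbour of $k$), which in the explicit approach is a conclusion of the exponent match rather than an input.
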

\begin{proof}
 Recall that $\overline{G}\left(\Rk(I)\right)=\{g_0,g_1,g_2,\ldots,g_r\}$, where $g_0=T_1^{d_2} X_0-T_0^{d_1} X_2$, $g_1=T_0^{d_1-u_1}X_1-T_1^{u_2}X_0$, and $g_2=T_1^{d_2-u_2}X_1-T_0^{u_1}X_2$. Thus, the two nodes that target $g_{k=2}$ are $g_0$ and $g_1$, which gives us as a result the first two elements of $S^{(1)}$: $s^{(1)}_{(0,2)} = X_1\mathbf{e}_0 - T^{u_2}_1 \mathbf{e}_2 +T^{u_1}_0 X_2\mathbf{e}_1$, and $s^{(1)}_{(1,2)} =T^{d_2-u_2}_1\mathbf{e}_1 - T^{d_1-u_1}_0 \mathbf{e}_2 + 1\mathbf{e}_0$. One verifies by direct computation that these two elements of $S^{(1)}$ are in $\Syz\left(\overline{G}\left(\Rk(I)\right)\right)$ and arise from reductions to zero with respect to $\overline{G}\left(\Rk(I)\right)$ of $X_1g_0$ and $T_1^{d_2-u_2}g_1$, respectively.
 
 It remains to be shown that the other elements of $S^{(1)}$ are in bijection to the remaining elements of $\cup_j G(M_j)$ as listed in Proposition~\ref{prop:ColonIdealStructure} and arise from reductions to zero of $mg_j$ with respect to $\overline{G}\left(\Rk(I)\right)$, where $m\in G(M_j)$. A bijection is clearly given via the arrow count in $\mathrm{DE}\left(\overline{\mathrm{Graph}}(I)\right)$, using the arrow $(j\longrightarrow k)$. Consider the element $g_j$. Clearly, $m:=v(j,k)\in G(M_j)$. It is also obvious that the first step of the reduction of $mg_j$ is subtraction of $v(k,j)g_k=:\tilde{m}g_k$. There remains the S-polynomial $s:=-m\cdot\mathrm{tt}(g_j)+\tilde{m}\cdot\mathrm{tt}(g_k)$. Since $j<k$, $\deg_{X_1}(\lt(g_j))<\deg_{X_1}(\lt(g_k))$; thus, $\deg_{X_1}\left(m\cdot\mathrm{tt}(g_j)\right)=\deg_{X_1}(m)>0=\deg_{X_1}\left(\tilde{m}\cdot\mathrm{tt}(g_k)\right)$. This implies $\lt(s)=m\cdot\mathrm{tt}(g_j)$. Without loss of generality, assume $\deg_{X_1}(\lt(g_j))\in\Delta_{\min}(d_1,u_1)$. Then by Lemma~\ref{lem:MinMaxGeneralRelation} item 1a or 3a, $\deg_{X_1}(\lt(s))\in\Delta_{\max}(d_1,u_1)$ and by item 1b or 3b of the same lemma it is the highest such degree below $\deg_{X_1}(\lt(g_k))$. Thus $g_h$ with $h:=\iota\left(\deg_{X_1}(\lt(s))\right)$ is the second node in $\overline{\mathrm{Graph}}(I)$ targeting $g_k$. Finally, the binomial $\tilde{s}:=s/w(j,k)$ is made up out of two coprime monomials and $\deg_{X_1}(\lt(\tilde{s}))=\deg_{X_1}(\lt(s))$. Thus by Lemma~\ref{lem:UniquePureX_1PowerBinomial}, $\tilde{s}=-g_h$; hence, $s+w(j,k)g_h=0$, finishing the reduction, as claimed.
\end{proof}

A Gr\"obner basis for the second syzygy module $\Syz^2\left(\overline{G}\left(\Rk(I)\right)\right)$ can be also obtained from $\overline{\mathrm{Graph}}(I)$. In this case, the triangles in the graph play a fundamental role.

\begin{theorem}\label{thm:SecondSyzygies}
 Write $\overline{G}=\{g_0,g_1,\ldots,g_r\}$, and denote by $\{\mathbf{f}_{(j,k)}\mid (j\longrightarrow k)\in\overline{\mathrm{Graph}}(I)\}$ the canonical basis of $(\mathbb{K}[T_0,T_1,X_0,X_1,X_2])^{2r-2}$. A Gr\"{o}bner basis $S^{(2)}$ of $\Syz^2\left(\overline{G}\left(\Rk(I)\right) \right)$ is given by:
 \begin{align*}
  S^{(2)}=
 \{  & \mathbf{s}_{(j,k,\ell)}^{(2)}=v(h,k)\mathbf{f}_{(j,k)}-v(l,k)\mathbf{f}_{(j,\ell)}+v(l,j)\mathbf{f}_{(k,\ell)}-w(j,k)\mathbf{f}_{(h,k)}\mid\\& (j,k,\ell)\in\mathrm{Tri}\left(\overline{\mathrm{Graph}}(I)\right)\wedge (h\longrightarrow k)\in\mathrm{DE}\left( \overline{\mathrm{Graph}}(I)\right) \wedge h \neq j
 \},
 \end{align*}

Moreover, $\langle S^{(2)}\rangle\subset (\mathbb{K}[T_0,T_1,X_0,X_1,X_2])^{2r-2}$ is a free submodule. Observe that $|S^{(2)}|=|\mathrm{Tri}\left(\overline{\mathrm{Graph}}(I)\right)|=r-2$.
.
\end{theorem}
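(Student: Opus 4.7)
The plan is to apply Schreyer's theorem a second time. By Theorem~\ref{thm:FirstSyzygies}, $S^{(1)}$ is a Gr\"obner basis of $\Syz\bigl(\overline{G}(\Rk(I))\bigr)$ with respect to the Schreyer order $\prec_S$ on $(\Kbb[\mathbf{T},\mathbf{X}])^{r+1}$ induced by $\overline{G}$ and the elimination order of Theorem~\ref{prop:degrevlexGB}. First I would pin down the leading term of each first syzygy: since $v(j,k)\lt(g_j)=v(k,j)\lt(g_k)=\lcm(\lt(g_j),\lt(g_k))$ and $j<k$, the positional tiebreaker yields $\lt_{\prec_S}\bigl(\mathbf{s}_{(j,k)}^{(1)}\bigr)=v(j,k)\mathbf{e}_j$, while the third summand is strictly subdominant because $w(j,k)\lt(g_h)=v(j,k)\,\mathrm{tt}(g_j)\prec v(j,k)\lt(g_j)$, as already extracted in the proof of Theorem~\ref{thm:FirstSyzygies}. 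Schreyer's theorem then tells us that $\Syz^2$ is generated by normal forms of S-polynomials of pairs in $S^{(1)}$ sharing the common leading position $\mathbf{e}_j$, equivalently by pairs of outgoing arrows from a common source $j$ in $\overline{\mathrm{Graph}}(I)$.

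For each triangle $(j,k,\ell)\in\mathrm{Tri}\bigl(\overline{\mathrm{Graph}}(I)\bigr)$, I would show that $\mathbf{s}_{(j,k,\ell)}^{(2)}$ arises as the Schreyer reduction of the S-pair $\bigl(\mathbf{s}_{(j,k)}^{(1)},\mathbf{s}_{(j,\ell)}^{(1)}\bigr)$. The reduction requires exactly two further first syzygies: $\mathbf{s}_{(k,\ell)}^{(1)}$, available because $(k\to\ell)$ is the third edge of the triangle, and $\mathbf{s}_{(h,k)}^{(1)}$, available because by the arrow count in Corollary~\ref{cor:AcyclicityArrowAndTriangleCount} the vertex $g_k$ has a unique second predecessor $h\neq j$. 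The essential sanity check is that $\partial_1\bigl(\mathbf{s}_{(j,k,\ell)}^{(2)}\bigr)=0$: expanding the differential in the four positions $\mathbf{e}_j,\mathbf{e}_k,\mathbf{e}_\ell,\mathbf{e}_h$, the last two vanish by direct cancellation, while the first two reduce to elementary $\lcm$--identities among $\lt(g_j),\lt(g_k),\lt(g_\ell),\lt(g_h)$ forced by the additive relations $d_k=d_j+d_h$ and $d_\ell=d_j+d_k$ among the $X_1$--degrees $d_i:=\deg_{X_1}\lt(g_i)$, which are precisely the content of Corollary~\ref{cor:nextrelevant} applied along the triangle. Together with Corollary~\ref{cor:AcyclicityArrowAndTriangleCount} this yields $|S^{(2)}|=r-2$.

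The main obstacle is that a source $j$ may emit more than two outgoing arrows: by Proposition~\ref{prop:ColonIdealStructure}, an upper generator $g_j$ targets both the next upper $g_u$ and all intermediate lower generators $g_{\ell_1},\ldots,g_{\ell_m}$, and dually for lower sources. Only the consecutive pairs $(j\to\ell_i,j\to\ell_{i+1})$ and the terminal pair $(j\to\ell_m,j\to u)$ close into triangles, so the non-consecutive S-pairs $(j\to\ell_a,j\to\ell_b)$ with $b-a\geq 2$ must be shown to reduce as $S$-linear combinations of the already constructed triangle syzygies. My plan is a telescoping argument along the chain $\ell_a\to\ell_{a+1}\to\cdots\to\ell_b$: the successive triangle syzygies in $S^{(2)}$ cancel one link at a time, with the cumulative cancellation controlled by Lemma~\ref{lem:MinMaxGeneralRelation}. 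This confirms that $S^{(2)}$ is a Schreyer-type Gr\"obner basis of $\Syz^2$.

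For the freeness of $\langle S^{(2)}\rangle$, I would compute the iterated Schreyer leading term of each $\mathbf{s}_{(j,k,\ell)}^{(2)}$. Using the additive relations above, the three ``triangle'' summands $v(h,k)\mathbf{f}_{(j,k)}$, $v(\ell,k)\mathbf{f}_{(j,\ell)}$, $v(\ell,j)\mathbf{f}_{(k,\ell)}$ all carry the same underlying monomial $\lcm\bigl(\lt(g_j),\lt(g_k),\lt(g_\ell)\bigr)$ in $\Kbb[\mathbf{T},\mathbf{X}]$, while the fourth summand $w(j,k)\mathbf{f}_{(h,k)}$ has strictly smaller $X_1$--degree $d_k<d_\ell$. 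At the $\prec_S$--level the third triangle summand lies in position $\mathbf{e}_k$ and is beaten by the first two, which lie in $\mathbf{e}_j$ with $j<k$; the iterated positional tiebreak then picks out $\mathbf{f}_{(j,k)}$ over $\mathbf{f}_{(j,\ell)}$ because $(j,k)$ precedes $(j,\ell)$ in the lex order on edges. Since the pair $(j,k)$ uniquely determines the triangle --- the sink $\ell$ is the unique common target of the arrows leaving $g_j$ and $g_k$ --- the leading positions of the elements of $S^{(2)}$ are pairwise distinct. Hence no third-level S-polynomial arises, $\Syz^3=0$, and $\langle S^{(2)}\rangle\simeq S^{r-2}$ is free of rank $r-2$.
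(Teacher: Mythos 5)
Your proposal is correct and follows essentially the same route as the paper: an iterated Schreyer construction in which the leading terms $v(j,k)\mathbf{e}_j$ of the first syzygies are grouped by their source vertex, the second syzygies are indexed by consecutive pairs of outgoing arrows (i.e.\ by triangles, with the explicit four-term reduction through $\mathbf{s}^{(1)}_{(j,\ell)}$, $\mathbf{s}^{(1)}_{(h,k)}$, $\mathbf{s}^{(1)}_{(k,\ell)}$ and the degree identities $\deg_{X_1}(g_k)=\deg_{X_1}(g_j)+\deg_{X_1}(g_h)$, $\deg_{X_1}(g_\ell)=\deg_{X_1}(g_j)+\deg_{X_1}(g_k)$), and freeness follows from the pairwise-distinct leading positions $\mathbf{f}_{(j,k)}$. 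The only organizational difference is that the paper invokes the colon-ideal form of Schreyer's theorem, under which the restriction to consecutive pairs is automatic because each induced colon ideal is principal (generated by a pure $X_1$-power), whereas you recover the same restriction by discarding the non-consecutive S-pairs through a standard chain/telescoping argument on the staircase of leading terms.
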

\begin{proof}
 We first look at the leading term set of $S^{(1)}$. It is in bijection to $\cup_j G(M_j)$, where the monomial ideals $M_j$ are as in Proposition~\ref{prop:ColonIdealStructure}. We order $S^{(1)}$ linearly in the way induced by $G(M_0)\cup G(M_1)\cup\ldots \cup G(M_{r-1})$ taken in the order of writing; the sets $G(M_j)$ themselves are ordered as written in Proposition~\ref{prop:ColonIdealStructure}. We now take the induced colon ideals. Their analysis can be split into that of the colon ideals of $G(M_j)$, which are associated to the module component supported on $\mathbf{e}_j$ ($0\leq j\leq r$). But, as noted in the proof of Proposition~\ref{prop:ColonIdealStructure}, $G(M_j)=\{T_i^{a_1}X_1^{b_1},\ldots,T_i^{a_s}X_1^{b_s}\}$ ($i\in\{0,1\}$) for a decreasing sequence of non-negative integers $a_1>a_2>\cdots >a_s\geq 0$ and an ascending sequence of non-negative integers $0\leq b_1<\cdots <b_s<c$. (These integers depend on $j$.) Thus, all colon ideals are generated by a single pure $X_1$-power. (Note that the singleton $G(M_0)$ does not induce any colon ideal.) The freeness of $\Syz^2\left(\overline{G}(I)\right)$ follows. 
 
 Now, for a given $j$, let $p=X_1^{b_{s+1}-b_s}= \lcm(T_i^{a_{s+1}}X_1^{b_{s+1}},T_i^{a_s}X_1^{b_s})/T_i^{a_s}X_1^{b_s}$ be the single generator of one of the colon ideals induced by $G(M_j)$. There are indexes $k<\ell$ (in fact, $\ell=k+1$), such that $T_i^{a_{s}}X_1^{b_{s}}=v(j,k)$ and $T_i^{a_{s+1}}X_1^{b_{s+1}}=v(j,\ell)$. The module term $p\mathbf{f}_{j,k}$ is the leading term of one of the generators of $\Syz\left(S^{(1)}\right)$ and every leading term arises in this way. Thus the generating set of $\Syz\left(S^{(1)}\right)$ induced by the colon ideal structure of the leading term set of $S^{(1)}$ is in bijection to $\mathrm{Tri}\left(\overline{\mathrm{Graph}}(I)\right)$ via $p\mathbf{f}_{j,k}\mapsto (j,k,\ell)$.
 

 
 Now consider $\mathbf{s}_{(j,k,\ell)}^{(2)}$ for a given triangle $(j,k,\ell)$ with $\ell\geq3$. As $g_k$ does not correspond to the last element in the enumeration of $G(M_j)$, $\deg_{X_1}(g_j)$ and $\deg_{X_1}(g_k)$ are of opposite types (upper vs. lower generators) by Corollary~\ref{cor:AcyclicityArrowAndTriangleCount}.  Let $g_h$ be the second node in $\overline{\mathrm{Graph}}(I)$ targeting $g_k$. Necessarily, $g_h$ is of the same tpye as $g_k$. Thus, by Lemma~\ref{lem:MinMaxGeneralRelation}, we have that $\deg \big( v(h,k) \big) =\deg_{X_1} \big( v(h,k) \big) =\deg_{X_1}(g_k)-\deg_{X_1}(g_h)$ is in the same type as $\deg_{X_1}(g_j)$ and it is the maximal such degree below $\deg_{X_1}(g_k)$. But $g_j$ targets $g_k$. Therefore, $\deg_{X_1}(g_j)=\deg_{X_1}(g_k)-\deg_{X_1}(g_h)$, this can also be deduced from Corollary \ref{cor:nextrelevant} since $\deg_{X_1}(g_j)+\deg_{X_1}(g_h)=\deg_{X_1}(g_k)$. A similar argument shows that $\deg(p)=\deg_{X_1}(g_\ell)-\deg_{X_1}(g_k)=\deg_{X_1}(g_j)$. Thus, $p=v(h,k)$ as claimed and the leading module term of
 $\mathbf{s}_{(j,k,\ell)}^{(2)}$ is 
 $v(h,k)\mathbf{f}_{(j,k)}$. 
 Substituting $\mathbf{s}_{(j,k)}^{(1)}$, we obtain an element $\mathbf{s}'\in\Syz\left(\overline{G}\right)$ with leading term
 $v(h,k)v(j,k)\mathbf{e}_{j}
 =\big(X_1^{\deg_{X_1}(g_j)}\big)
 \big(\lt(g_k)/X_1^{\deg_{X_1}(g_j)}\big)
 \mathbf{e}_j=\lt(g_k)\mathbf{e}_j$. 
 We can reduce $\mathbf{s}'$ using $\mathbf{s}_{(j,\ell)}^{(1)}$, which has the leading term $v(j,\ell)\mathbf{e}_j$. 
 Again using Proposition~\ref{prop:ColonIdealStructure} and Lemma~\ref{lem:MinMaxGeneralRelation}, we see that $\deg_{X_1}\big(v(j,\ell)\big)=\deg_{X_1}(g_k)$. 
 It is possible that $v(j,\ell)$ is additionally divisible by a $T_i$ ($i\in\{0,1\}$);
 but in any case, $v(j,\ell)v(\ell,k)=\lt(g_k)$. 
 Thus, we reduce $\mathbf{s}'$ by subtracting $v(\ell,k)\mathbf{s}_{(j,\ell)}^{(1)}$, obtaining $\mathbf{s}''$.
 In its support, we find $v(h,k)w(j,k)\mathbf{e}_h$.
 The syzygy $\mathbf{s}_{(h,k)}^{(1)}\in S^{(1)}$ has leading term $v(h,k)\mathbf{e}_h$. 
 Hence, we reduce $\mathbf{s}''$ by subtracting $w(j,k)\mathbf{s}_{(h,k)}^{(1)}$, obtaining $\mathbf{s}'''$. There is only one module monomial in its support which is divisible by $X_1$, namely $-v(h,k)v(k,j)\mathbf{e}_k$. Using Lemma~\ref{lem:MinMaxGeneralRelation}, we can simplify this to $-\lt(g_j)\mathbf{e}_k$. We notice that the element $\mathbf{s}_{(k,l)}^{(1)}\in S^{(1)}$ has leading term $v(k,\ell)\mathbf{e}_k$. By Lemma~\ref{lem:MinMaxGeneralRelation}, $\deg_{X_1}\big( v(k,\ell) \big)=\deg_{X_1}(g_j)$. It is possible that $ v(k,\ell)$ is additionally divisible by a $T_i$ ($i\in\{0,1\}$); but in any case, $  v(k,\ell) v(\ell,j)=\lt(g_j)$. Hence, we reduce $\mathbf{s}'''$ by adding $v(\ell,j)\mathbf{s}_{(k,\ell)}^{(1)}$, obtaining $\mathbf{s}''''$. One can check that no term divisible by $X_1$ remains in $\mathbf{s}''''$. Assume $\mathbf{s}''''\neq\mathbf{0}$; then its leading term would not be divisible by $X_1$, in contradiction to Theorem~\ref{thm:FirstSyzygies}. Hence, $\mathbf{s}''''=\mathbf{0}$.

 Note that even though $g_{h=0}$ is technically of neither type, but we have that $G(M_0)=X_1$ and $(0\rightarrow2)  \in \mathrm{DE}\left(\overline{\mathrm{Graph}}(I)\right)$. Hence, $g_0$ behaves exactly like a lower generator of the type coming from $\Delta_{\min}(d_1,u_1)$. Moreover, it can be verified by direct computation that the claims stated before also hold for the particular triangle $(1,2,3)\in\mathrm{Tri}\left(\overline{\mathrm{Graph}}(I)\right)$ with $h=0$, more about this particular triangle will follow in the next subsection.
\end{proof}

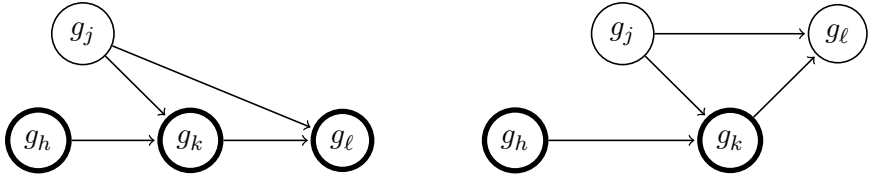
\begin{figure}[ht!]
\begin{center}
\begin{tikzpicture}[->,
shorten >=1pt,node distance=2cm,semithick]

  \node[draw, circle]  (gj)  {$g_j$};
  \node[draw, circle, line width= 2pt]  (gk) [below right of=gj] {$g_k$};
  \node[draw, circle, line width= 2pt]  (gh) [left of=gk] {$g_h$};
  \node[draw, circle, line width= 2pt]  (gl) [right of=gk]  {$g_\ell$};

  \path (gj)
        edge           (gk)
        edge             (gl)
        (gh) edge          (gk)
        (gk) edge       (gl);
\end{tikzpicture}\;\;\;\;\;\;\;\;\;\;\;\;
\begin{tikzpicture}[->,
shorten >=1pt,node distance=2cm,semithick]

  \node[draw, circle]  (gj)  {$g_j$};
  \node[draw, circle,line width= 2pt]  (gh) [below left of=gj] {$g_h$};
  \node[draw, circle, line width= 2pt]  (gk) [below right of=gj] {$g_k$};
  \node[draw, circle]  (gl) [above right of=gk]  {$g_\ell$};

  \path (gj)
        edge           (gk)
        edge             (gl)
        (gh) edge          (gk)
        (gk) edge       (gl);
\end{tikzpicture}
\end{center}
\caption{The nodes $g_h$, $g_j$, $g_k$, $g_\ell$ appearing in the the construction of the syzygy $\mathbf{s}_{(j,k,\ell)}^{(2)}$ in the proof of Theorem ~\ref{thm:SecondSyzygies} are situated as illustrated here. There are two cases. Note that $g_j$, $g_k$ must belong to opposite types (upper vs. lower generators); the only difference between the two cases is whether $g_\ell$ is of the same type as $g_k$ (seen on the left) or of the same the type as $g_j$ (seen on the right). Without loss of generality we assumed both $g_h$ and $g_k$ are lower generators, there exists the other possibility of both being upper generators in which case the images would be flipped upside down.}\label{fig:triangules}
\end{figure}

\begin{figure}[ht!]
\begin{center}
\begin{tikzpicture}[
  delim yshift=1ex,
  delim xshift=.1em,]
\matrix[
  matrix of math nodes,
  row sep=2mm,
  column sep=3.5mm,
  lr delim={[ and ] around 2-2 to 5-5},
  lr delim={[ and ] around 2-6 to 5-6},
  ] (a) {  & (h,k) & (j,k) & (j,\ell) & (k,\ell) & (j,k,\ell) \\
h & \red{v}\color{red}\bm{(}\red{h,k} \color{red}\bm{)}  & w(j,k) & 0 & 0 & -w(j,k)  \\
j & w(h,k)  & \red{v}\color{red}\bm{(}\red{j,k} \color{red}\bm{)}  & \red{v}\color{red}\bm{(}\red{j,} \color{red}\bm{\ell)} & w(k,\ell) &  \red{v}\color{red}\bm{(}\red{h,k} \color{red}\bm{)}  \\
k & -v(k,h) & -v(k,j) & w(j,\ell)  & \red{v}\color{red}\bm{(}\red{k,}  \color{red}\bm{\ell)} & -v(\ell,k) \\
\ell & 0 & 0 & -v(\ell,j) & -v(\ell,k) & v(\ell,j) \\
};
\end{tikzpicture}
\end{center}
\caption{Illustration of the construction of the syzygy $\mathbf{s}_{(j,k,\ell)}^{(2)}$ in the proof of Theorem ~\ref{thm:SecondSyzygies}. Only the terms in bold and coloured in \red{red} are divisible by $X_1$, note that these terms can only come from $v(a,b)$ with $a<b$ and $(a,b)\neq (1,2)$. In the proof, it is shown that both $v(h,k)v(j,k)=\lt(g_k)$ and $v(j,\ell)v(\ell,k)=\lt(g_k)$; hence, $v(h,k)v(j,k)-v(j,\ell)v(\ell,k)=0$. Analogously both $v(h,k)v(k,j)=\lt(g_j)$ and $v(k,\ell)v(\ell,j)=\lt(g_j)$; so, $-v(h,k)v(k,j)+v(k,\ell)v(\ell,j)=0$. These two identities relate the middle rows of the matrix with columns $\mathbf{s}_{(h,k)}^{(1)},\ldots,\mathbf{s}_{(k,\ell)}^{(1)}$ to the column representing $\mathbf{s}_{(j,k,\ell)}^{(2)}$. Moreover, it is clearly to be seen that after a multiplication of these matrices, the top and bottom entries vanish. There remain only sums of products of entries that are not divisible by $X_1$ (which must reduce to zero as explained at the end of proof of  Theorem ~\ref{thm:SecondSyzygies}).} \label{fig:matrices}
\end{figure}


Since the second syzygy module of $\Rk(I)$ is free, the resolution stops here. The above results can be summarised in the main theorem of this section:

\begin{theorem}\label{thm:reso}
    There exists a free resolution of $\Rk(I)$ of the form
    $$ 0 \xrightarrow{\hspace{0.4cm}} S^{r-2} \xrightarrow{\hspace{0.2cm} \phi_2 \hspace{0.2cm} } S^{2(r-1)}\xrightarrow{\hspace{0.2cm} \phi_1 \hspace{0.2cm} } S^{r+1} \xrightarrow{\hspace{0.2cm} \phi_0 \hspace{0.2cm} } \Rk(I) \xrightarrow{\hspace{0.4cm}}  0$$
    encoded in the graph $\overline{\mathrm{Graph}}(\Rk(I))$, with differential maps $\phi_1$ and $\phi_2$ deduced in the usual way from Theorems \ref{thm:FirstSyzygies} and \ref{thm:SecondSyzygies}, respectively.
\end{theorem}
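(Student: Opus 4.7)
The plan is to assemble the resolution directly from the three Gröbner basis results already proved, treating the present theorem as a bookkeeping consolidation. First I would define the augmentation $\phi_0:S^{r+1}\longrightarrow \Rk(I)$ by sending a canonical basis $\{\mathbf{e}_0,\ldots,\mathbf{e}_r\}$ to the Gröbner basis elements $g_0,g_1,\ldots,g_r$; surjectivity onto $\Rk(I)$ is the content of Theorem \ref{prop:degrevlexGB}. The rank $r+1$ of the domain matches the vertex count of $\overline{\mathrm{Graph}}(\Rk(I))$.

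Next I would define $\phi_1:S^{2(r-1)}\longrightarrow S^{r+1}$ by sending a canonical basis indexed by the directed edges $(j\longrightarrow k)\in\mathrm{DE}\bigl(\overline{\mathrm{Graph}}(\Rk(I))\bigr)$ to the first-syzygy elements $\mathbf{s}_{(j,k)}^{(1)}$ of Theorem \ref{thm:FirstSyzygies}. Exactness at $S^{r+1}$, i.e.\ $\mathrm{im}(\phi_1)=\ker(\phi_0)=\Syz(\overline{G}(\Rk(I)))$, is immediate from that theorem, since a Gröbner basis of a module is in particular a generating set. The rank $2(r-1)$ agrees with the edge count in Corollary \ref{cor:AcyclicityArrowAndTriangleCount}.

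Analogously, I would define $\phi_2:S^{r-2}\longrightarrow S^{2(r-1)}$ by sending a canonical basis indexed by the triangles $(j,k,\ell)\in\mathrm{Tri}\bigl(\overline{\mathrm{Graph}}(\Rk(I))\bigr)$ to the second-syzygy elements $\mathbf{s}_{(j,k,\ell)}^{(2)}$ of Theorem \ref{thm:SecondSyzygies}. Exactness at $S^{2(r-1)}$ again follows because $S^{(2)}$ is a Gröbner basis, hence a generating set, of $\Syz(\mathrm{im}(\phi_1))=\ker(\phi_1)$; and the rank $r-2$ matches the triangle count from Corollary \ref{cor:AcyclicityArrowAndTriangleCount}.

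The only remaining point, and the one that might superficially look like an obstacle, is that the chain must actually \emph{terminate} at the $S^{r-2}$ position, i.e.\ $\phi_2$ has to be injective. Here I would invoke the freeness statement inside Theorem \ref{thm:SecondSyzygies}: the induced colon ideals on $\mathrm{in}_\prec(S^{(1)})$ are generated by pure powers of $X_1$, so $\langle S^{(2)}\rangle$ is a free submodule and $S^{(2)}$ is a basis of it; since the number of generators equals the rank, $\phi_2$ is injective and the complex is a resolution, as claimed. The differentials $\phi_1,\phi_2$ are then nothing but the matrices whose columns are the explicit syzygies $\mathbf{s}_{(j,k)}^{(1)}$ and $\mathbf{s}_{(j,k,\ell)}^{(2)}$ read off from the edges and triangles of $\overline{\mathrm{Graph}}(\Rk(I))$, so the entire resolution is encoded combinatorially by the graph.
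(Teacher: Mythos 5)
Your proposal is correct and matches the paper's treatment: the paper gives no separate proof of this theorem, presenting it as a direct summary of Theorem \ref{prop:degrevlexGB}, Theorems \ref{thm:FirstSyzygies} and \ref{thm:SecondSyzygies}, and the counts in Corollary \ref{cor:AcyclicityArrowAndTriangleCount}, exactly as you assemble it. Your explicit remark that injectivity of $\phi_2$ follows from the freeness statement in Theorem \ref{thm:SecondSyzygies} is the one point the paper leaves implicit, and it is handled correctly.
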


\subsection{The minimal free resolution of $\Rk(I)$}\label{sec:minimal_resolution}
As expected, the free resolution obtained in Theorem \ref{thm:reso} is not minimal, since we added a redundant element to the minimal generating set in order to obtain a Gr\"{o}bner basis. Nonetheless, it is very close to being minimal, in fact the minimal resolution can be recovered with a couple of simple algebraic reductions which do not depend on the parameters. These reductions are explicitly described in the present section.

Note that the nonzero constant coefficients in the differentials that keep this resolution from being minimal can only come from the coefficient $w(j,k)$ in $s^{(1)}_{(j,k)}$ and from $-w(j,k)$ in $s^{(2)}_{(j,k,l)}$. This is very straightforward since all other coefficients are of the type $v(a,b)$ and the l.c.m. of any two nonconstant terms can never give you a constant. Remember that $w(j,k)=\gcd(\mathrm{tt}(g_j),\mathrm{tt}(g_k))$, and note that $X_0,X_2 \mid \mathrm{tt}(g_\rho)$ for every $\rho \geq 3$. However, we know that: $\mathrm{tt}(g_0)=T_0^{d_1}X_2$, $\mathrm{tt}(g_1)=T_1^{u_2}X_0$, and $\mathrm{tt}(g_2)=T_0^{u_1}X_2$. Thus, $w(j,k)=\gcd(\mathrm{tt}(g_j),\mathrm{tt}(g_k))=1$ only for pairs $(0,1)$ and $(1,2)$. Nevertheless, there does not exist any first nor second syzygy where $j=0$ and $k=1$; hence, the only first syzygy with a nonzero constant coefficient is $s^{(1)}_{(1,2)}$ and the only second syzygy where that also happens is $s^{(2)}_{(1,2,3)}$. Since the binomials $g_0, g_1$ and $g_2$ are trivially obtained directly from $u_1,u_2,d_1$ and $d_2$, let us illustrate the matrices from the differentials as in Figure \ref{fig:matrices}:
\begin{figure}[ht!]\label{fig:reductions}
\begin{center}
\begin{tikzpicture}[
  delim yshift=1ex,
  delim xshift=.1em,]
\matrix[
  matrix of math nodes,
  row sep=2mm,
  column sep=2mm,
  lr delim={[ and ] around 2-2 to 6-6},
  lr delim={[ and ] around 2-8 to 6-9},
  ] (a)  {  & (0,2) & (1,2) & (1,3) & (2,3) & \dots & \hspace{0.6cm} \hspace{0.4cm}  & (1,2,3)& \dots \\
(0) & \hspace{0.2cm} \color{red}\bm{X_1} \hspace{0.2cm} & \textbf{1} & 0 & 0 &  0 \mydots 0 &\hspace{0.6cm}  (0,2) \hspace{-0.2cm} & \hspace{0.4cm}  \textbf{-1} \hspace{0.4cm}   & 0 \mydots 0   \\
(1) & T_0^{u_1}X_2  & T_1^{d_2-u_2} & \red{v}\color{red}\textbf{(1,3)} \ & w(2,3) & \dots  & \hspace{0.6cm} (1,2) \hspace{-0.2cm} & \red{v}\color{red}\textbf{(2,3)} & \dots  \\
(2) & -T_1^{u_2 X_0} & -T_0^{d_1-u_1} & w(1,3)  & \red{v}\color{red}\textbf{(2,3)} & \dots & \hspace{0.6cm} (1,3) \hspace{-0.2cm} & -v(3,2) & \dots  \\
(3) & 0 & 0 & -v(3,1) & -v(3,2) & \dots & \hspace{0.6cm} (2,3) \hspace{-0.2cm}& v(3,1) & \dots  \\
\vdots & \vdots  & \vdots & \vdots & \vdots & \hspace{0.2cm}  \ddots \hspace{0.2cm} & \hspace{0.6cm} \vdots \hspace{-0.2cm} &  \vdots & \hspace{0.2cm}  \ddots \hspace{0.2cm}   \\
};
\end{tikzpicture}
\end{center}
\end{figure}

On this figure one can clearly see that after two algebraic reductions one obtains the minimal free resolution. Moreover, if we take a closer look, the reduction on the first matrix gets rid of $s^{(1)}_{(1,2)}$ as well as our redundant generator $g_0$, i.e.\ erasing the whole first row on the first matrix with label $(0)$, and both the second column on the first matrix as well as the second row on the second matrix with labels $(1,2)$, while only altering the coefficients of one other first syzygy namely $s^{(1)}_{(0,2)}$ (the one represented on the first column). However, note that this same syzygy is removed immediately after due to the one reduction on the second matrix, together with the second syzygy $s^{(2)}_{(1,2,3)}$. Since, this first syzygy is not present in any of all other second syzygies, all entries in the matrix remain unchanged. Thus,  simply omitting these few columns and rows gives you the differential maps from the minimal free resolution of $\Rk(I)$.

\begin{theorem}\label{th:minimal_resolution}
    The minimal free resolution of $\Rk(I)$ of the form:
    $$ 0 \xrightarrow{\hspace{0.4cm}} S^{r-3} \xrightarrow{\hspace{0.2cm} \phi_2 \hspace{0.2cm} } S^{2(r-2)}\xrightarrow{\hspace{0.2cm} \phi_1 \hspace{0.2cm} } S^{r} \xrightarrow{\hspace{0.2cm} \phi_0 \hspace{0.2cm} } \Rk(I) \xrightarrow{\hspace{0.4cm}}  0$$
    encoded  in a graph of the form described in Section \ref{sec:graph}, with differential maps $\phi_1$ and $\phi_2$ deduced in the usual way from Theorems \ref{thm:FirstSyzygies} and \ref{thm:SecondSyzygies} respectively, acting on the reduced graph instead.
\end{theorem}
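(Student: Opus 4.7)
The plan is to minimalize the free resolution of Theorem~\ref{thm:reso} by performing two explicit cancellations corresponding to the two unit entries in its differential matrices. Recall that a graded free resolution is minimal if and only if every differential has all entries in the homogeneous maximal ideal, so the standard minimalization lemma allows us to simultaneously cancel a source basis element and a target basis element whenever they are linked by a unit entry. Thus the entire proof amounts to (i) locating all unit entries, (ii) carrying out the induced cancellations, and (iii) checking that the remaining resolution has the claimed shape.

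First, I identify all unit entries. Every coefficient in $\phi_1$ or $\phi_2$ is either of type $v(a,b)=\lcm(\lt(g_a),\lt(g_b))/\lt(g_a)$ or of type $w(a,b)=\gcd(\mathrm{tt}(g_a),\mathrm{tt}(g_b))$. A $v(a,b)$ is never a unit because the leading terms of distinct elements of $\overline{G}(\Rk(I))$ are never divisors of one another. For $w(a,b)$, using $\mathrm{tt}(g_0)=T_0^{d_1}X_2$, $\mathrm{tt}(g_1)=T_1^{u_2}X_0$, $\mathrm{tt}(g_2)=T_0^{u_1}X_2$, together with the fact that $X_0X_2\mid \mathrm{tt}(g_\rho)$ for all $\rho\geq 3$ (immediate from Algorithm~\ref{general:algorithm} and Lemma~\ref{lem:UniquePureX_1PowerBinomial}), one checks that $w(j,k)=1$ occurs only for the pair $(j,k)=(1,2)$. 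Moreover, the pair $(0,1)$ never appears as an arrow of $\overline{\mathrm{Graph}}(\Rk(I))$, so $w(0,1)$ never appears as an entry. Consequently the only unit entry in $\phi_1$ is the coefficient $w(1,2)=1$ of $\mathbf{e}_0$ in $\mathbf{s}^{(1)}_{(1,2)}$, and the only unit entry in $\phi_2$ is the coefficient $-w(1,2)=-1$ of $\mathbf{f}_{(0,2)}$ in $\mathbf{s}^{(2)}_{(1,2,3)}$.

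Next, I perform the two cancellations. Using the unit at position $(g_0,\mathbf{s}^{(1)}_{(1,2)})$, we remove the generator $g_0$ and the first syzygy $\mathbf{s}^{(1)}_{(1,2)}$ from the resolution. Using the unit at position $(\mathbf{s}^{(1)}_{(0,2)},\mathbf{s}^{(2)}_{(1,2,3)})$, we remove the first syzygy $\mathbf{s}^{(1)}_{(0,2)}$ and the second syzygy $\mathbf{s}^{(2)}_{(1,2,3)}$. The key technical point, which I expect to be the main obstacle to verify cleanly, is that these two cancellations do not cascade into modifications of the surviving entries. Indeed, $g_0$ appears with nonzero coefficient only in $\mathbf{s}^{(1)}_{(0,2)}$ and $\mathbf{s}^{(1)}_{(1,2)}$ (by Theorem~\ref{thm:FirstSyzygies} together with the structure of $\overline{\mathrm{Graph}}(\Rk(I))$), so the row operation erasing $g_0$ can only affect the column $\mathbf{s}^{(1)}_{(0,2)}$, which is itself deleted by the second cancellation. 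Symmetrically, $\mathbf{s}^{(1)}_{(0,2)}$ appears in $\phi_2$ with nonzero coefficient only in $\mathbf{s}^{(2)}_{(1,2,3)}$ (Theorem~\ref{thm:SecondSyzygies} applied to the triangles in $\overline{\mathrm{Graph}}(\Rk(I))$), so deleting this column from $\phi_2$ leaves the remaining entries unchanged.

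Finally, I tally ranks and identify the reduced complex. The cancellations remove exactly one generator, two first syzygies, and one second syzygy, producing the shape $0\to S^{r-3}\to S^{2(r-2)}\to S^r\to \Rk(I)\to 0$. Combinatorially, the deleted data $\{g_0,(0,2),(1,2),(1,2,3)\}$ are precisely the one vertex, two edges, and one triangle by which $\overline{\mathrm{Graph}}(\Rk(I))$ extends $\mathrm{Graph}(\Rk(I))$; so the surviving nodes, edges, and triangles are exactly those of $\mathrm{Graph}(\Rk(I))$, and the differentials $\phi_1,\phi_2$ are obtained from Theorems~\ref{thm:FirstSyzygies} and~\ref{thm:SecondSyzygies} simply by restricting the indexing sets. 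Since all remaining entries are non-units (by the identification in the first step), the reduced complex is a minimal free resolution, which completes the proof.
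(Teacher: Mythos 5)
Your proposal is correct and follows essentially the same route as the paper: identify the only unit entries as the $w(1,2)=\pm1$ coefficients in $\mathbf{s}^{(1)}_{(1,2)}$ and $\mathbf{s}^{(2)}_{(1,2,3)}$, perform the two corresponding cancellations, check that the only perturbed column $\mathbf{s}^{(1)}_{(0,2)}$ is itself deleted and that $\mathbf{f}_{(0,2)}$ occurs in no other second syzygy, and match the surviving data with $\mathrm{Graph}(\Rk(I))$. The paper's argument is exactly this two-step reduction of the matrices of Theorem~\ref{thm:reso}, so no further comparison is needed.
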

\begin{proof}
    The proof follows naturally from the explanations given above. The reductions on the free resolution from Theorem \ref{thm:reso} give us a result a minimal free resolution with one less element in the initial $S$-module of the sequence, two fewer elements in the generating set of the first module of the syzygies and one less in the second module of the sequences. This is equivalent to removing from $\overline{\mathrm{Graph}}(I)$ the node $g_0$, the edges $(0\rightarrow2)$ and $(1\rightarrow2)$, and the triangle $(1,2,3)$. Thus, we are left with the reduced graph $\mathrm{Graph}(\Rk(I))$ of the form described in Section \ref{sec:graph}, which supports the minimal free resolution in the same way since as we saw above the rest of the entries in the matrices remain unchanged.
\end{proof}

\section*{Acknowledgements}
This work is supported by grant PID2020- 116641GB-I00 funded by MCIN/ AEI/ 10.13039/501100011033.

\bibliographystyle{plain}
\bibliography{References.bib}

\end{document}